\newtheorem{theorem}{Theorem}
\newtheorem{proposition}{Proposition}[section]
\newtheorem{lemma}[proposition]{Lemma}
\newcommand{\abs}[1]{\lvert #1 \rvert }
\newcommand{\bigabs}[1]{\bigl\lvert #1 \bigr\rvert }
\newcommand{\Bigabs}[1]{\Bigl\lvert #1 \Bigr\rvert }
\newcommand{\biggabs}[1]{\biggl\lvert #1 \biggr\rvert }
\newcommand{\norm}[1]{\lVert#1\rVert}
\newcommand{\dualprod}[2]{\langle #1, #2 \rangle}
\newcommand{\lt}{\leqslant}
\newcommand{\gt}{\geqslant}
\newcommand{\du}{\,\mathrm{d}}
\newcommand{\R}{\mathbb{R}}
\newcommand{\N}{\mathbb{N}}
\newcommand{\charfun}[1]{\chi_{#1}}
\DeclareMathOperator{\dive}{div}
\DeclareMathOperator{\curl}{curl}
\DeclareMathOperator{\capa}{cap}
\DeclareMathOperator{\dist}{dist}
\DeclareMathOperator{\diam}{diam}
\DeclareMathOperator{\inter}{int}
\DeclareMathOperator{\supp}{supp}
\newcommand{\st}{\,:\,}
\newcommand{\weakto}{\rightharpoonup}
\newcommand{\goesto}{\rightarrow}
\newcommand{\nehari}{\mathcal{N}_{\varepsilon}}
\begin{document}

\title[Desingularization of vortices]{Desingularization of vortex rings and shallow water vortices by semilinear elliptic problem}
\author{S\'ebastien de Valeriola}
\address{Universit\'e catholique de Louvain\\
Institut de Recherche en Math\'ematique et Physique (IRMP)\\
Chemin du Cyclotron 2 bte L7.01.01\\
1348 Louvain-la-Neuve\\
Belgium}
\email{Sebastien.deValeriola@uclouvain.be}
\author{Jean Van Schaftingen}
\address{Universit\'e catholique de Louvain\\
Institut de Recherche en Math\'ematique et Physique (IRMP)\\
Chemin du Cyclotron 2 bte L7.01.01\\
1348 Louvain-la-Neuve\\
Belgium}
\email{Jean.VanSchaftingen@uclouvain.be}
\date{\today }

\begin{abstract}
Steady vortices for the three-dimensional Euler equation for inviscid incompressible flows and for the shallow water equation are constructed and showed to tend asymptotically to singular vortex filaments. 
The construction is based on a study of solutions to the semilinear elliptic problem
\[
\left\{
\begin{aligned}
  - \dive \Bigl(\frac{\nabla u_{\varepsilon}}{b}\Bigr) & = \frac{1}{\varepsilon^2} b f(u_{\varepsilon} - \log \tfrac{1}{\varepsilon} q) & & \text{in \(\Omega\)},\\
  u_\varepsilon & = 0 & & \text{on \(\partial \Omega\)},
\end{aligned}
\right.
\]
for small values of \(\varepsilon > 0\).
\end{abstract}

\subjclass[2010]{35Q31 (35B25, 35J20, 35J61, 35J75, 35J91, 35R35, 76B47)}

\keywords{Euler equation; inviscid incompressible flow; vortex ring; shallow water equation; lake equation; vortex; steady flow; stationary solution; semilinear elliptic problem; superlinear nonlinearity; singular perturbation; asymptotics; truncation; free boundary; Hardy inequality; Stokes stream function; stream function; mountain pass theorem; Nehari manifold; capacity estimates}

\maketitle

\tableofcontents

\section{Introduction and main results}

\subsection{Statement of the problem}
In an inviscid incompressible flow, the velocity field \(\mathbf{v}\) and static pressure field \(p\) are governed by the Euler equations
\begin{equation*}
\left\{
\begin{aligned}
\dive \mathbf{v} &= 0, \\
\partial_t \mathbf{v} + (\mathbf{v} \cdot \nabla) \mathbf{v} & = -\nabla p.
\end{aligned}
\right.
\end{equation*}
The conservation of momentum equation can be rewritten in terms of the vorticity \(\boldsymbol{\omega} = \curl \mathbf{v}\) as
\[
 \partial_t \mathbf{v} + \boldsymbol{\omega} \times \mathbf{v} = -\nabla \Bigl(p + \frac{\abs{\mathbf{v}}^2}{2} \Bigr).
\]
The quantities \(\frac{\abs{\mathbf{v}}^2}{2}\) and \(p + \frac{\abs{\mathbf{v}}^2}{2}\) are called \emph{dynamic pressure} and \emph{total pressure}.
In regions where the vorticity vanishes \(\boldsymbol{\omega} = 0\), the flow is called irrotational and the  equations reduce to the Bernoulli equation.
In other cases, one can study flows which are irrotational outside of a vortex core.

In 1858, Helmoltz has studied the motion of vortex rings, which are toroidal regions in which the vorticity is concentrated \cite{He1858}. 
The circulation \(\kappa\) of a vortex is the circulation integral \(\int_\Gamma \mathbf{v} \cdot \mathbf{t}\) for any oriented curve \(\Gamma\) with tangent vector field \(\mathbf{t}\) that encircles the vorticity region once.
Kelvin and Hick have showed that if the vortex ring has radius \(r_*\), if its cross-section \(\varepsilon\) is small and if its circulation is \(\kappa\), then the vortex ring moves at the velocity \citelist{\cite{La32}*{art. 163 (7), p. 241}\cite{Ke1910}*{67}}
\begin{equation}
\label{eqKelvinHick}
 \frac{\kappa}{4 \pi r_*} \Bigl(\log \frac{8 r_*}{\varepsilon} - \frac{1}{4}\Bigr).
\end{equation}
In this initial study of vortex motion, the flows were not steady flows; as the velocity is merely asymptotically constant in the vortex, one does not expect the vortex ring to preserve its shape.
After the works of Helmholtz, Kelvin \cite{Ke1910} interested himself in this problem and stated a variational principle for steady vortex flows.
In 1894, Hill has given an explicit translating flow of the Euler equation whose vorticity is concentrated \emph{inside a ball} \cite{Hi1894}. 

These works bring the question whether it is possible to construct flows whose vorticity is supported in an arbitrarily small toroidal region.
Fraenkel has given a first positive answer by constructing for small \(\varepsilon > 0\) a family of steady flows whose vortex cross section is of the order of \(\varepsilon\) and whose velocity satisfy asymptotically \eqref{eqKelvinHick} \citelist{\cite{Fr70}\cite{Fr73}}.
His approach consists in first noting that since the flow is incompressible in the whole space, it is possible to write \(\mathbf{v} = \curl \boldsymbol{\psi}\) where \(\boldsymbol{\psi}\) is a velocity vector potential. Moreover, since the flow should be axisymmetric, the vector potential \(\boldsymbol{\psi}\)  can be written in terms of the Stokes stream function \(\psi\) in cylindrical coordinates \((r, \theta, z)\),
\[
  \boldsymbol{\psi} (r, \theta, z) = \psi (r, z) \frac{\mathbf{e}_\theta}{r};
\]
the associated velocity field is
\[
 \mathbf{v} (r, \theta, z) = \frac{1}{r} \Bigl( - \frac{\partial \psi}{\partial z} \mathbf{e}_r + \frac{\partial \psi}{\partial r} \mathbf{e}_z \Bigr)
\]
and the associated vorticity is
\[
 \boldsymbol{\omega} (r, \theta, z) 
= - \Bigl(\frac{\partial}{\partial r} \Bigl( \frac{1}{r} \frac{\partial \psi}{\partial r}\Bigr) + \frac{\partial}{\partial z} \Bigl( \frac{1}{r} \frac{\partial \psi}{\partial z}\Bigr) \Bigr) \mathbf{e}_\theta.
\]
The key point is to note that if \(\boldsymbol{\omega} = r f (\psi) \mathbf{e}_\theta\) for some function \(f : \R \to \R\) and \(F' = f\), then 
\[
 \boldsymbol{\omega} \times \mathbf{v} = - \nabla \bigl(F (\psi) \bigr),
\]
that is, \(\mathbf{v}\) is a stationary solution of the incompressible Euler equation with \(p = F (\psi) - \frac{\abs{\mathbf{v}}^2}{2}\).
The problem is thus reduced to a study of the \emph{semilinear elliptic problem}
\begin{equation}
\label{eqSemiLinear}
   - \Bigl(\frac{\partial}{\partial r} \Bigl( \frac{1}{r} \frac{\partial \psi}{\partial r}\Bigr) + \frac{\partial}{\partial z} \Bigl( \frac{1}{r} \frac{\partial \psi}{\partial z}\Bigr) \Bigr) = r f (\psi)
\end{equation}
Fraenkel constructed solutions to this problem by a variant of the implicit function theorem.
We call this construction the \emph{stream-function method} in contrast with the \emph{vorticity method} developed by Friedman and Turkington in which the vorticity \(\boldsymbol{\omega}\) instead of the stream function is a solution of a variational problem \cite{FrTu81} (see also \citelist{ \cite{BeBr80}\cite{Fr82}\cite{Bu87a}\cite{Bu87b}\cite{Bu03}\cite{BaBu01}\cite{BuPr04}}).
The stream function method together with an implicit function argument was used to construct vortex rings close to Hill's spherical vortex \citelist{\cite{No72}\cite{No73}\cite{Bu97}}.

Afterwards, vortex rings were constructed with the stream function method by constructing solutions to \eqref{eqSemiLinear} by minimization under constraint; their asymptotics could not be studied precisely because of the presence of a Lagrange multiplier in the nonlinearity \(f\) \citelist{\cite{BeFr74}\cite{BeFr80}}.
The asymptotics could be studied precisely by letting the flux diverge \cite{Ta94}.
By using the mountain pass theorem of Ambrosetti and Rabinowitz \cite{AmRa73}, Ambrosetti and Mancini, Ni, and Ambrosetti and Struwe have constructed solutions for a given \(f\) \citelist{\cite{Ni80}\cite{AmMa81}\cite{AmSt89}}.
The asymptotics of a family \((\psi_\varepsilon)\) of these solutions have been studied by Ambrosetti and Yang for a family \(f_\varepsilon (s) = \frac{1}{\varepsilon^2} (s)_+^p\)  \cite{Ya95}.
However, their result did not prevent the circulation of the vortex to go to \(0\) and, according to our present work, it does go to \(0\) so that the limiting object are degenerate vortex rings with \emph{vanishing radius} and \emph{vanishing circulation}. 

Finally, we would like to mention that it is possible to study the asymptotics of the motion of vortices in the nonsteady case \cite{BeCaMa00}.

All the results that we have mentioned above have counterparts in the study of \emph{vortex pairs} for the two-dimensional Euler equation \citelist{\cite{No75}\cite{BeFr80}\cite{AmYa90}\cite{Ya91}\cite{LiYaYa05}}. 
In particular, Smets and Van Schaftingen have showed that in order to obtain nonvanishing asymptotic circulation one could, instead of imposing fixed boundary conditions \(\psi_\varepsilon = \psi_0 + o (1)\) at infinity, impose boundary conditions depending on \(\varepsilon\):  \(\psi_\varepsilon = \psi_0 - \frac{\kappa}{2\pi} \log \tfrac{1}{\varepsilon} + o (1)\) at infinity \cite{SmVS10}. Physically, this takes into account that the total flow between the two vortices should blow up as the logarithm of the diameter of the vortex core. They have obtained a desingularization result for solutions constructed by variational methods; solutions to the same problem where also obtained by Lyapunov--Schmidt reduction argument \citelist{\cite{CaLiWe2012a}\cite{CaLiWe2012b}}.

\subsection{Vortex rings for the Euler equation}
In the present work, following the idea of Smets and Van Schaftingen, we consider the semilinear elliptic problem
\begin{equation}
\label{problemSemilinearRing}
\left\{
\begin{aligned}
  - \dive \frac{1}{r} \nabla \psi_\varepsilon &= \frac{r}{\varepsilon^2} (\psi_\varepsilon)_+^p & &\text{in \(\R^2_+\),}\\
  \frac{\psi_\varepsilon}{\psi_0} & \to \log \tfrac{1}{\varepsilon} & & \text{at \(\infty\)}.
\end{aligned}
\right.
\end{equation}
We study the asymptotic behaviour of its solutions. 
Even if the semilinear elliptic problem is similar to the corresponding problem for the two-dimensional, the asymptotics of the solutions are quite different. For instance, whereas in \cite{SmVS10} the localization of concentration points is governed by a renormalized enery which appears as a second term in the asymptotics, in the present work the solution concentrates at minimizers of the leading term. 

As a consequence of these asymptotics, we obtain first a desingularization result in the whole space.

\begin{theorem}
\label{theoremVortexRingSpace}
For every \(W > 0\) and \(\kappa > 0\), there exists a family of steady flows \((\mathbf{v}_{\varepsilon}, p_\varepsilon) \in C^1 (\R^3)\) for the Euler equations in \(\R^3\) that are axisymmetric around \(\mathbf{e}_3\) and such that 
the vortex core \(\supp \curl \mathbf{v}_{\varepsilon}\) is a topological torus, the circulation of the vortex ring is \(\kappa_\varepsilon\)
and for every \(\varepsilon \in (0, 1)\),
\begin{align*}
 \mathbf{v}_\varepsilon & \to  - W \log \tfrac{1}{\varepsilon} \mathbf{e}_z & & \text{at \(\infty\)}.
\end{align*}
Moreover, one has 
\begin{gather*}
 \lim_{\varepsilon \to 0} \kappa_\varepsilon = \kappa,\\
 \lim_{\varepsilon \to 0} \dist_{C_{r_*}} (\supp \curl \mathbf{v}_\varepsilon) = 0,\\
 c \varepsilon \lt \sigma (\supp \curl \mathbf{v}_\varepsilon) \lt C \varepsilon,
\end{gather*}
for some constants \(0 < c < C\) and 
\[
  r_* = \frac{\kappa}{4 \pi W}.
\]
\end{theorem}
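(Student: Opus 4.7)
The plan is to use the stream-function method. Looking for an axisymmetric steady flow, I encode the velocity by the Stokes stream function $\psi_\varepsilon \colon \R^2_+ \to \R$ via the formulas recalled in the introduction. If the vorticity takes the form $\boldsymbol{\omega}_\varepsilon = r f_\varepsilon(\psi_\varepsilon)\,\mathbf{e}_\theta$ with $f_\varepsilon(s) = \tfrac{1}{\varepsilon^2}(s)_+^p$, and I set $p_\varepsilon := F_\varepsilon(\psi_\varepsilon) - \tfrac{1}{2}\abs{\mathbf{v}_\varepsilon}^2$ with $F_\varepsilon' = f_\varepsilon$, then, as recalled above, $(\mathbf{v}_\varepsilon, p_\varepsilon)$ automatically solves the steady Euler equations. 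It therefore suffices to produce a family of solutions to \eqref{problemSemilinearRing} with the right concentration behaviour and to read off the geometric and quantitative properties of the induced vorticity.

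To match the prescribed asymptotic velocity $-W\log\tfrac{1}{\varepsilon}\,\mathbf{e}_z$, I choose the background profile $\psi_0(r,z) := -\tfrac{W r^2}{2}$, so that $\tfrac{1}{r}\partial_r(\psi_0\log\tfrac{1}{\varepsilon}) = -W\log\tfrac{1}{\varepsilon}$. With this $\psi_0$, the existence and asymptotic theory for \eqref{problemSemilinearRing} to be established in the body of the paper (by a mountain-pass/Nehari scheme supplemented by a concentration analysis, as announced by the keywords) yields the family $\psi_\varepsilon$. The $C^1$ regularity of $\mathbf{v}_\varepsilon$ and $p_\varepsilon$ then follows from standard elliptic regularity for the subcritical semilinear problem.

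The geometric assertions of the theorem translate directly from the concentration properties of $\psi_\varepsilon$. The positivity set $\{\psi_\varepsilon > 0\}$, outside of which the vorticity vanishes, is contained in a meridian disk of radius $O(\varepsilon)$ centred at some $(r_\varepsilon, z_\varepsilon)$ that converges to a limit $(r_*, z_*)$; its rotation around the $z$-axis produces the topological torus $\supp \curl \mathbf{v}_\varepsilon$ and gives the convergence $\dist_{C_{r_*}}(\supp \curl \mathbf{v}_\varepsilon) \to 0$. The two-sided size bound $c\varepsilon \lt \sigma(\supp \curl \mathbf{v}_\varepsilon) \lt C\varepsilon$ encodes the natural scale built into \eqref{problemSemilinearRing}, and is obtained by rescaling $\psi_\varepsilon$ about $(r_\varepsilon, z_\varepsilon)$ and comparing with the ground state of the limiting planar equation $-\Delta U = U_+^p$. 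The circulation, computed by Stokes's theorem as $\kappa_\varepsilon = \int_{\R^2_+} \tfrac{r}{\varepsilon^2}(\psi_\varepsilon)_+^p \,\du r\,\du z$, is kept close to the prescribed value $\kappa$ by the normalization built into the variational problem, and its convergence to $\kappa$ follows from the sharp concentration estimates.

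The hardest step, which I expect to be the main obstacle, is the identification of the concentration radius as $r_* = \kappa/(4\pi W)$. Heuristically this is the Kelvin--Hick equilibrium: a translating ring of radius $r_*$ and circulation $\kappa$ has self-induced axial velocity given by \eqref{eqKelvinHick}, and cancellation with the background flow $-W\log\tfrac{1}{\varepsilon}\,\mathbf{e}_z$ at the leading $\log\tfrac{1}{\varepsilon}$ order forces $\kappa/(4\pi r_*) = W$. Rigorously, this will emerge from an energy expansion in which the leading term, viewed as a function of the would-be concentration point, is minimized precisely at $r = r_*$. This is a specifically three-dimensional phenomenon: in contrast with the two-dimensional situation of \cite{SmVS10}, the location is pinned down already at leading order, rather than by a lower-order renormalized energy.
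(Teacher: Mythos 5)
Your overall strategy (stream-function reduction, variational construction of \(\psi_\varepsilon\), concentration analysis) is the one used in the paper, but there is a genuine gap at the very point you identify as the hardest step: the choice of background profile. You take \(\psi_0(r,z) = -\tfrac{W r^2}{2}\), i.e.\ \(q = \tfrac{W}{2} r^2\) in the reformulation \eqref{problemP}. The asymptotic analysis of section~\ref{sectionAsymptotics} shows that the vortex core concentrates where \(q^2/b = \bigl(\tfrac{W}{2}r^2\bigr)^2/r = \tfrac{W^2}{4} r^3\) attains its infimum and that the circulation satisfies \(\kappa_\varepsilon\, b(a_\varepsilon)/q(a_\varepsilon) \to 2\pi\), i.e.\ \(\kappa_\varepsilon \approx \pi W r_\varepsilon\). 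With your \(q\) the infimum of \(q^2/b\) is \(0\) and is approached only as \(r \to 0\): the ring collapses onto the axis and its circulation tends to \(0\). This is exactly the degenerate behaviour attributed in the introduction to the solutions of Ambrosetti and Yang; your construction reproduces it rather than avoiding it. There is also no ``normalization built into the variational problem'' that keeps \(\kappa_\varepsilon\) near \(\kappa\): problem \eqref{problemP} contains no constraint involving \(\kappa\), so \(\kappa\) must enter through the data \(q\).

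The missing idea is that the admissible \(q\)'s compatible with the far-field velocity \(-W\log\tfrac{1}{\varepsilon}\,\mathbf{e}_z\) form a one-parameter family \(q(r,z) = \tfrac{W}{2}r^2 + c\) (only \(\nabla q\) is constrained by the velocity at infinity), and the flux constant \(c>0\) is the free parameter that simultaneously pins the concentration radius and the asymptotic circulation. The paper takes \(c = \tfrac{3}{8W}\bigl(\tfrac{\kappa}{2\pi}\bigr)^2\); then \(q^2/b\) attains its minimum at the interior point \((r_*, 0)\) with \(r_* = \kappa/(4\pi W)\) and \(2\pi q(r_*,0)/b(r_*,0) = \kappa\), so that propositions~\ref{propositionAsymptotics} and~\ref{propositionImprovedAsymptotics} deliver both \(\kappa_\varepsilon \to \kappa\) and the localization on \(C_{r_*}\); the two-sided bound \(c\varepsilon \lt \sigma \lt C\varepsilon\) then comes from lemma~\ref{ThBorneInfDiametreAe} and the Dini-continuity refinement at the interior minimum, not from a blow-up comparison with the planar ground state. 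Without the additive constant your leading-order energy is not minimized at \(r_*\), so the Kelvin--Hick identification you sketch cannot be carried out.
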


Here, the cross-section of a set \(A \subset \R^3\) axisymmetric around \(\mathbf{e}_3\) is 
\[
 \sigma (A) = \sup \bigl\{\delta_3 (x, y) \st x, y \in A\bigr\},
\]
where the axisymmetric distance is defined by
\[
 \delta_3 (x, y) = \inf \bigl\{ \abs{x - R (y)} \st R \text{ is a rotation around \(\mathbf{e}_3\)}\bigr\},
\]
\(C_{r}\) is a circle of radius \(r\) in a plane perpendicular to \(\mathbf{e}_3\)
and the asymmetric distance is 
\[
 \dist_{C_r} (A) =  \sup_{x \in A} \inf_{y \in C_r} \abs{x - y}.
\]

Our construction and our study of asymptotics are quite flexible. For example, we can study vortex rings in a cylinder.

\begin{theorem}
\label{theoremVortexRingCylinder}
For every \(W > 0\) and \(\kappa > 0\), there exists a family of steady flows \((\mathbf{v}_{\varepsilon}, p_\varepsilon) \in C^1 (B_1 \times \R)\) for the Euler equations in \(B_1 \times \R\) that are axisymmetric around \(\mathbf{e}_3\) and such that 
\begin{align*}
\mathbf{v}_\varepsilon \cdot \mathbf{n} &= 0, & & \text{on \(\partial B_1 \times \R^2\)},\\
 \mathbf{v}_\varepsilon & \to  - W \log \tfrac{1}{\varepsilon} \mathbf{e}_z & & \text{at \(\infty\)},
\end{align*}
the vortex core \(\supp \curl \mathbf{v}_{\varepsilon}\) is a topological torus, the circulation of the vortex is \(\kappa_\varepsilon\).
Moreover, one has 
\begin{gather*}
 \lim_{\varepsilon \to 0} \kappa_\varepsilon = \kappa,\\
 \lim_{\varepsilon \to 0} \dist_{C_r} (\supp \curl \mathbf{v}_\varepsilon) = 0,\\
 \lim_{\varepsilon \to 0} \frac{\log \sigma (\supp \curl \mathbf{v}_\varepsilon)}{\log \varepsilon} = 1,
\end{gather*}
and 
\[
 r_* = 
\left\{
\begin{aligned}
     &\frac{\kappa}{4 \pi W} & &\text{if \(\kappa < 4 \pi W\)},\\
     &1 & &\text{if \(\kappa \gt 4 \pi W\)}.
\end{aligned}
\right.
\]
\end{theorem}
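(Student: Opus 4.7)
The plan is to follow the same pattern used for Theorem~\ref{theoremVortexRingSpace}: pass to the Stokes stream function, apply the general asymptotic analysis developed earlier in the paper for the abstract semilinear problem
\[
-\dive\Bigl(\tfrac{\nabla u_\varepsilon}{b}\Bigr) = \tfrac{1}{\varepsilon^2}\,b\,f\bigl(u_\varepsilon - \log\tfrac{1}{\varepsilon}\,q\bigr),\qquad u_\varepsilon\vert_{\partial\Omega}=0,
\]
and then reinterpret the conclusions at the level of the velocity field. Writing points in the meridional half-plane as \((r,z)\), the domain becomes the half-strip \(\Omega=(0,1)\times\R\), the weight is \(b(r)=r\), and the nonlinearity is \(f(s)=(s)_+^p\). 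The slip condition \(\mathbf{v}_\varepsilon\cdot\mathbf{n}=0\) on \(\partial B_1\times\R\) forces \(\psi_\varepsilon\) to be constant on \(\{r=1\}\), while the prescribed asymptotic velocity \(-W\log\tfrac{1}{\varepsilon}\,\mathbf{e}_z\) dictates the substitution \(q(r,z)=\tfrac{1}{2}Wr^2\), so that \(u_\varepsilon = \psi_\varepsilon + \log\tfrac{1}{\varepsilon}\,q\) satisfies a zero Dirichlet problem with the right divergence structure.

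Once this reduction is in place, the existence and concentration results produce, for small \(\varepsilon\), a positive solution \(u_\varepsilon\) whose superlevel set \(\{u_\varepsilon > \log\tfrac{1}{\varepsilon}\,q\}\) shrinks to a single point \((r_*,z_*)\in\overline{\Omega}\); by the \(z\)-translation invariance, only the value of \(r_*\) needs to be pinned down. The capacity and energy expansions carried out in the abstract framework identify it as the minimizer over \((0,1]\) of an effective leading-order functional of the form
\[
\Phi(r) = \kappa W r - \tfrac{\kappa^2}{4\pi}\log r,
\]
whose stationary point on \((0,\infty)\) is \(r=\kappa/(4\pi W)\). When \(\kappa<4\pi W\) this lies inside \((0,1)\) and gives the global minimum; when \(\kappa\gt 4\pi W\) the function \(\Phi\) is strictly decreasing throughout \((0,1]\), so the constrained minimum is attained at \(r_*=1\). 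This is exactly the dichotomy claimed.

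The conversion to statements about \(\mathbf{v}_\varepsilon\) then proceeds as in Theorem~\ref{theoremVortexRingSpace}: the circulation \(\kappa_\varepsilon = \int_\Omega \tfrac{1}{\varepsilon^2}\,b\,f(u_\varepsilon - \log\tfrac{1}{\varepsilon}\,q)\) converges to the prescribed \(\kappa\) by the Nehari-level computation and the usual normalization of the mountain-pass solution; the asymmetric Hausdorff convergence of \(\supp\curl\mathbf{v}_\varepsilon\) to \(C_{r_*}\) is a direct translation of the point-concentration of the positivity set; and the topological-torus property follows by combining the connectedness of the positivity set (via a Harnack-type argument on super-level sets) with the axisymmetric revolution around \(\mathbf{e}_3\).

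The main obstacle is the boundary case \(\kappa\gt 4\pi W\), where the concentration point \((1,z_*)\) sits on \(\partial\Omega\). The weight \(b(r)=r\) is then nondegenerate at the concentration point, but the interior blow-up profile is replaced by a half-plane profile, and the two-sided \(\varepsilon\)-scale estimate \(c\varepsilon\lt\sigma\lt C\varepsilon\) of Theorem~\ref{theoremVortexRingSpace} no longer survives up to the wall. One must redo the local capacity analysis near \(r=1\) and settle for the weaker logarithmic cross-section estimate \(\log\sigma(\supp\curl\mathbf{v}_\varepsilon)/\log\varepsilon\to 1\), which has the advantage of covering both regimes uniformly. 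Matching this capacity lower bound with an upper bound will require constructing a test concentration profile at \((r_*,z_*)\) --- either Hill-type when \(r_*<1\) or a half-plane variant pressed against the wall when \(r_*=1\) --- and verifying that \(\Phi\) remains controlled uniformly in the boundary limit, so that \((r_*,z_*)\) is indeed a minimizer rather than merely a critical point.
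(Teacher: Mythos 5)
There is a genuine gap, and it sits in the very first step: your choice of \(q\). Writing \(q(r,z)=\tfrac12 W r^2\) with no additive constant reproduces exactly the degenerate situation the paper is designed to avoid. In this framework the concentration radius is governed by the \emph{leading} term of the energy, namely \(\inf_\Omega q^2/b\) (proposition~\ref{propositionAsymptotics}), and the limiting circulation is \(2\pi q(a_\varepsilon)/b(a_\varepsilon)\) at the concentration point; with \(q=\tfrac12 Wr^2\) and \(b=r\) one gets \(q^2/b=\tfrac14 W^2r^3\), whose infimum over \((0,1)\times\R\) is \(0\) and is approached only as \(r\to 0\), so the vortex collapses onto the axis with vanishing circulation --- precisely the Ambrosetti--Yang degeneracy recalled in the introduction. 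Moreover \(\inf_\Omega q=0\), so assumption \((\mathcal A_3)\) fails and the asymptotic machinery does not even apply. The essential missing ingredient is the free additive constant in the stream function (the flux between the vortex and the axis): the paper takes \(q(r,z)=\tfrac{W}{2}r^2+\tfrac{3}{8W}\bigl(\tfrac{\kappa}{2\pi}\bigr)^2\) when \(\kappa<4\pi W\), so that \(q^2/r\) is minimized at \(r=\kappa/(4\pi W)<1\) with \(2\pi q/r=\kappa\) there, and \(q(r,z)=\tfrac{W}{2}r^2+\bigl(\tfrac{\kappa}{2\pi}-\tfrac{W}{2}\bigr)\) when \(\kappa\gt 4\pi W\), so that \(q^2/r\) is decreasing on \((0,1]\), its infimum over the open strip is the value at \(r=1\), and \(2\pi q(1)=\kappa\). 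That constant is what lets you prescribe \(\kappa\) at all.

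Relatedly, your ``effective leading-order functional'' \(\Phi(r)=\kappa Wr-\tfrac{\kappa^2}{4\pi}\log r\) is asserted without derivation and is not what this paper's expansions produce: it is a fixed-circulation (Kelvin/Kirchhoff--Routh-type) heuristic natural to the vorticity method, whereas here the circulation is an output rather than a constraint, and localization is by minimization of \(q^2/b\) for a \(q\) chosen in advance. That \(\Phi\) happens to have its critical point at \(\kappa/(4\pi W)\) does not substitute for the computation. Conversely, your concern about the boundary case \(\kappa\gt 4\pi W\) is overstated: no half-plane blow-up profile or new capacity analysis near \(r=1\) is needed, since proposition~\ref{propositionAsymptotics} already tolerates concentration at \(\partial\Omega\) (its capacity estimates use only that \(\R^2\setminus\Omega\) is unbounded and connected, together with \((\mathcal A_1)\)--\((\mathcal A_2)\)); the only price of boundary concentration is that proposition~\ref{propositionImprovedAsymptotics} is unavailable, which is exactly why the theorem states only the logarithmic cross-section estimate. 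Existence itself is immediate from proposition~\ref{propositionExistenceInvariant}, since \((0,1)\times\R\), \(b\) and \(q\) are translation-invariant; no concentration-compactness argument is required in this case.
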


Burton has constructed similar vortex rings in a cylinder, but he did not study their asymptotics \cite{Bu87a}.

If \(\kappa > 4 \pi W\), the velocity \(W \log \tfrac{1}{\varepsilon} \) of the vortex ring is less than  predicted by the Kelvin--Hick formula \eqref{eqKelvinHick}.
We do not study in detail this phenomenon in the present work, but we think that it might be explained by an interaction with the boundary that reduces the velocity by
\[
    \frac{\kappa}{4 \pi \dist (\supp \curl \mathbf{v}_\varepsilon, \partial B(0, 1) \times \R)},
\]
similar to the contribution of the boundary for the two-dimensional Euler equation \cite{SmVS10}. 
This could also explain why the asymptotics of \(\sigma (\supp \curl \mathbf{v}_\varepsilon)\) are less sharp than those of theorem~\ref{theoremVortexRingSpace}.

\bigskip

Similarly we can study vortex rings outside a ball.

\begin{theorem}
\label{theoremVortexRingOutsideBall} 
For every \(W > 0\) and \(\kappa > 0\), there exists a family of steady flows \((\mathbf{v}_{\varepsilon}, p_\varepsilon) \in C^1 (\R^3 \setminus B_1)\) for the Euler equations in \(\R^3\) that are axisymmetric around \(\mathbf{e}_3\) and such that 
the vortex core \(\supp \curl \mathbf{v}_{\varepsilon}\) is a topological torus, the circulation of the vortex ring is \(\kappa_\varepsilon\)
and 
\begin{align*}
 \mathbf{v}_\varepsilon \cdot \mathbf{n} & = 0 & & \text{on \(\partial B_1\)},\\
 \mathbf{v}_\varepsilon & \to  - W \log \tfrac{1}{\varepsilon} \mathbf{e}_z & & \text{at \(\infty\)}.
\end{align*}
Moreover, one has 
\begin{gather*}
 \lim_{\varepsilon \to 0} \kappa_\varepsilon = \kappa,\\
 \lim_{\varepsilon \to 0} \dist_{C_{r_*}} (\supp \curl \mathbf{v}_\varepsilon) = 0,\\
\frac{\log \sigma (\supp \curl \mathbf{v}_\varepsilon)}{\log \varepsilon} = 1,
\end{gather*}
for \(r_*\) such that 
\[
  \mathbf{v} (r_*, 0) = - \frac{\kappa}{4 \pi W} \mathbf{e}_z,
\]
where \(\mathbf{v}_0 : \R^3 \setminus B_1\) is the irrotational flow outside \(B_1\) with velocity \(W\) at infinity:
\[
\left\{
\begin{aligned}
  \dive \mathbf{v}_0 &= 0 & & \text{in \(\R^3 \setminus B_1\)},\\
  \curl \mathbf{v}_0 &= 0 & & \text{in \(\R^3 \setminus B_1\)},\\
  \mathbf{v}_0 \cdot \mathbf{n} & = 0 & & \text{on \(\partial B_1\)},\\
  \mathbf{v}_0 & \to - W \mathbf{e}_z & & \text{at \(\infty\)}.
\end{aligned}
\right.
\]
\end{theorem}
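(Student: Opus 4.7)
The plan is to follow the stream-function method outlined around \eqref{eqSemiLinear}, on the meridional exterior domain $\Omega = \{(r,z) \in \R^2_+ \st r^2 + z^2 > 1\}$. Let $q$ denote the Stokes stream function of the irrotational flow $\mathbf{v}_0$ defined in the theorem: it is the unique solution of $-\dive(\nabla q / r) = 0$ in $\Omega$ that vanishes on the half-circle $\partial \Omega \cap \R^2_+$ and on the axis $\{r = 0\}$ and behaves like the stream function of the uniform vertical flow $-W\mathbf{e}_z$ at infinity. Setting $\psi_\varepsilon = u_\varepsilon + \log \tfrac{1}{\varepsilon}\,q$, the construction of axisymmetric steady Euler flows with the prescribed asymptotic velocity and the impermeability condition $\mathbf{v}_\varepsilon \cdot \mathbf{n} = 0$ on $\partial B_1$ reduces exactly to the semilinear problem of the abstract on $\Omega$, with weight $b = r$, nonlinearity $f(s) = s_+^p$, and $u_\varepsilon = 0$ on $\partial \Omega$.

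Solutions $u_\varepsilon$ are then obtained by a variational mountain-pass argument on the Nehari manifold $\nehari$ for the associated weighted functional. Subcriticality and superlinearity of the power $p$, together with a Hardy inequality controlling the behaviour of $u_\varepsilon$ near the axis, provide Palais--Smale compactness at the mountain-pass level and yield, for each $\varepsilon$ small enough, a positive critical point $u_\varepsilon$, and hence a corresponding axisymmetric velocity field $\mathbf{v}_\varepsilon$ through the formulas recalled in the introduction. By construction, the meridional trace of the vortex core coincides with the set where $u_\varepsilon + \log \tfrac{1}{\varepsilon}\,q > 0$.

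The asymptotic analysis follows the scheme used for theorem~\ref{theoremVortexRingSpace}. Uniform bounds on the rescaled energy force this positive set to shrink to a single point $(r_\varepsilon, z_\varepsilon)$, and a dilation by $\varepsilon$ exhibits $u_\varepsilon((r_\varepsilon + \varepsilon s, z_\varepsilon + \varepsilon t))$ as converging to a radial ground state of the constant-coefficient limit problem on $\R^2$. Plugging this profile back and expanding the energy to order $1/\log \tfrac{1}{\varepsilon}$ produces a reduced functional on $(r_*, z_*)$ whose extremiser must satisfy $z_* = 0$ by reflection symmetry across $\{z = 0\}$ together with an equation on $r_*$ which, after identifying the derivatives of $q$ with the components of the irrotational velocity $\mathbf{v}_0$, reads exactly $\mathbf{v}_0(r_*, 0) = -\tfrac{\kappa}{4\pi W}\mathbf{e}_z$. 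The topological torus structure of $\supp \curl \mathbf{v}_\varepsilon$, the convergence $\kappa_\varepsilon \to \kappa$ and the Hausdorff-type convergence to $C_{r_*}$ follow at once from this rescaled profile.

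The main obstacle, compared to theorem~\ref{theoremVortexRingSpace}, is the identification of the concentration point. The reduced functional mixes the renormalised self-interaction of the bubble with the regular part of the Green function of $-\dive(\nabla \cdot / r)$ on $\Omega$, and extracting the clean condition stated above requires a careful expansion of this Green function and recognition of its gradient structure through $q$ and $\mathbf{v}_0$. A secondary difficulty is the weaker asymptotic $\log \sigma(\supp \curl \mathbf{v}_\varepsilon)/\log \varepsilon \to 1$: unlike in the whole-space case, one cannot hope to control $\sigma$ by a constant multiple of $\varepsilon$, and weighted capacity estimates near $\partial B_1$ will be needed to rule out stray parts of the vortex support close to the boundary, analogously to the discussion after theorem~\ref{theoremVortexRingCylinder}.
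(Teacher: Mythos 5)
Your proposal has two genuine gaps, both at points the paper treats as the crux of this particular theorem. First, existence: the meridional domain $\Omega = \{(r,z) \in \R^2_+ \st r^2+z^2>1\}$ is unbounded, and "subcriticality and superlinearity" of the power $p$ do \emph{not} give Palais--Smale compactness at the mountain-pass level there --- a Palais--Smale sequence can drift to infinity in the $z$-direction, where the problem becomes the translation-invariant one with $q^\infty(r,z)=\tfrac{W}{2}r^2+k$. The paper's proof goes through the concentration-compactness alternative of proposition~\ref{propositionStrict}: either the weak limit is nontrivial, or the level $c_\varepsilon$ is at least the level $c_\varepsilon^\infty$ of the problem at infinity, and one must verify the strict inequality $c_\varepsilon < c_\varepsilon^\infty$ by comparing $\inf_\Omega q^2/b$ with $\inf_\Omega (q^\infty)^2/b$ via propositions~\ref{ThUpperBound} and~\ref{propositionAsymptotics}. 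The text explicitly flags this as "the main difference in the proof" of this theorem, and your argument skips it entirely.

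Second, the normalization of $q$ and the localization mechanism. You take the stream function $q$ to vanish on $\partial B_1$ and on the axis; the paper takes $q(r,z)=\tfrac{W}{2}\bigl(r^2-\tfrac{r}{r^2+z^2}\bigr)+k$ with a \emph{positive} constant $k$ tuned to $\kappa$ and $W$ (two different choices according to whether $\kappa$ exceeds $6\pi W$). This constant is not cosmetic: by proposition~\ref{propositionAsymptotics} the vortex concentrates at minimizers of the \emph{leading-order} quantity $q^2/b$ and $\kappa_\varepsilon\, b(a_\varepsilon)/q(a_\varepsilon)\to 2\pi$, so the additive constant is exactly the parameter that fixes both the limiting circulation and the radius $r_*$; with $q=0$ on $\partial\Omega$ the infimum of $q^2/b$ is zero and attained at the boundary, and the whole scheme degenerates. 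Relatedly, your proposed localization --- blow-up to a radial ground state and expansion of a reduced functional through the regular part of a Green function --- is not how this paper proceeds and is arguably not how the problem works: as the introduction stresses, in contrast with the two-dimensional case of \cite{SmVS10}, the concentration point here is selected by the leading term $\pi\log\tfrac{1}{\varepsilon}\inf_\Omega q^2/b$, identified by matching the test-function upper bound of proposition~\ref{ThUpperBound} against capacity-based lower bounds, with no Green-function expansion or profile decomposition. The condition $\mathbf{v}_0(r_*,0)=-\tfrac{\kappa}{4\pi W}\mathbf{e}_z$ then comes from Fermat's condition for the minimum of $q^2/r$ together with the choice of $k$, not from a second-order renormalized energy. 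What you describe as "the main obstacle" is therefore a programme for a different (Lyapunov--Schmidt-flavoured) proof that you have not carried out, while the steps the actual proof needs are missing.
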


The main difference in the proof of theorem~\ref{theoremVortexRingOutsideBall} is that the existence relies on a concentration-compactness argument \citelist{\cite{Li84}\cite{Ra92}}.

It is moreover possible to extend these results in some sense to a general outside domain.
\begin{theorem}
\label{theoremVortexRingOutsideCompact} 
Let \(K \subset \R^3\) be compact, connected and symmetric under rotations around \(\mathbf{e}_3\).
For every \(W > 0\) and for every \(\psi : \R^2_+ \to (-\infty, 0)\) such that \(\mathbf{v}_0 = \curl (\psi \mathbf{e}_\theta/r)\) solves
\[
\left\{
\begin{aligned}
  \dive \mathbf{v}_0 &= 0 & & \text{in \(\R^3 \setminus K\)},\\
  \curl \mathbf{v}_0 &= 0 & & \text{in \(\R^3 \setminus K\)},\\
  \mathbf{v}_0 \cdot \mathbf{n} & = 0 & & \text{on \(\partial K\)},\\
  \mathbf{v}_0 & \to - W \mathbf{e}_3 & & \text{at \(\infty\)},
\end{aligned}
\right.
\]
there exists a family of steady flows \((\mathbf{v}_{\varepsilon}, p_\varepsilon) \in C^1 (\R^3 \setminus B_1)\) for the Euler equations in \(\R^3\) that are axisymmetric around \(\mathbf{e}_3\) and such that 
the vortex core \(\supp \curl \mathbf{v}_{\varepsilon}\) is a topological torus, the circulation of the vortex ring is \(\kappa_\varepsilon\)
and
\begin{align*}
 \mathbf{v}_\varepsilon &\to - W \mathbf{e}_3 & & \text{at \(\infty\)},\\
 \mathbf{v}_\varepsilon \cdot \mathbf{n} & = 0 & & \text{on \(\partial B_1\)},
\end{align*}
Moreover, if \((a_{\varepsilon})_{\varepsilon > 0} = ( (r_\varepsilon, z_\varepsilon))_{\varepsilon > 0}\) is a family such that \(\curl \mathbf{v}_\varepsilon (a_{\varepsilon}) \ne 0\), 
\begin{gather*}
 \lim_{\varepsilon \to 0} \frac{r_\varepsilon}{\psi (r_\varepsilon, z_\varepsilon)} \kappa_\varepsilon = - 2 \pi,\\
 \lim_{\varepsilon \to 0} \frac{\psi (r_\varepsilon, z_\varepsilon)^2}
{r_\varepsilon} = \inf_{(r, \theta, z) \R^3 \setminus K} \frac{\psi (r, z)^2}{r},\\
 \lim_{\varepsilon \to 0} \frac{\log \sigma (\supp \curl \mathbf{v}_\varepsilon)}{\log \varepsilon} = 1.
\end{gather*}
\end{theorem}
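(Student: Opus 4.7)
The plan is to reduce the construction of the vortex ring to the semilinear elliptic problem displayed in the abstract, just as in the preceding theorems. I write $\psi_\varepsilon = \log\tfrac{1}{\varepsilon}\,\psi + u_\varepsilon$, where $\psi$ is the given irrotational background stream function, and take the nonlinearity $f(s)=(s)_+^p$, so that it is enough to find $u_\varepsilon$ satisfying
\[
\left\{
\begin{aligned}
  -\dive\Bigl(\frac{\nabla u_\varepsilon}{r}\Bigr) &= \frac{r}{\varepsilon^2}\,\bigl(u_\varepsilon + \log\tfrac{1}{\varepsilon}\,\psi\bigr)_+^p & & \text{in } \Omega,\\
  u_\varepsilon &= 0 & & \text{on } \partial\Omega,
\end{aligned}
\right.
\]
where $\Omega\subset\R^2_+$ is the axisymmetric cross-section of $\R^3\setminus K$. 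Because $\psi<0$ in $\Omega$, the nonlinearity fires only where $u_\varepsilon \gtrsim \log\tfrac{1}{\varepsilon}\,\lvert\psi\rvert$, which is what drives the concentration; once $u_\varepsilon$ is in hand, $\mathbf{v}_\varepsilon = \curl(\psi_\varepsilon \mathbf{e}_\theta/r)$ is automatically a steady Euler flow by the computation recalled in the introduction.

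\textbf{Existence of $u_\varepsilon$.} I would construct $u_\varepsilon$ as a mountain-pass critical point on the Nehari manifold $\nehari$ in the weighted Sobolev space with norm $\bigl(\int_\Omega \lvert\nabla u\rvert^2/r\bigr)^{1/2}$. Since $\Omega$ is unbounded, compactness of the Palais--Smale sequences has to be recovered by the Lions concentration-compactness principle, in the same spirit as the proof of Theorem~\ref{theoremVortexRingOutsideBall}: vanishing is excluded by strict positivity of the mountain-pass level, while dichotomy and escape to infinity are excluded by strict subadditivity of the reduced functional, using that $\lvert\psi\rvert$ grows to infinity as $r+\lvert z\rvert\to\infty$, so that any translated weak limit is strictly costlier.

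\textbf{Concentration asymptotics.} Given $a_\varepsilon=(r_\varepsilon,z_\varepsilon)$ in the vortex core, I rescale $u_\varepsilon$ by $\varepsilon$ around $a_\varepsilon$ and, after shifting by $\log\tfrac{1}{\varepsilon}\,\psi(a_\varepsilon)$, expect convergence to the unique positive ground state of $-\Delta U = U_+^p$ on $\R^2$. The leading-order expansion of the mountain-pass energy then takes the form of an explicit power of $\log\tfrac{1}{\varepsilon}$ multiplied by an explicit monotone function of $\psi(a_\varepsilon)^2/r_\varepsilon$, so that minimality of the mountain-pass level forces $\psi(r_\varepsilon,z_\varepsilon)^2/r_\varepsilon$ to converge to $\inf_{\R^3\setminus K}\psi^2/r$ along every family of core points. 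Integrating the rescaled equation over the concentration region and invoking the ground-state identity yields the circulation limit $r_\varepsilon\kappa_\varepsilon/\psi(r_\varepsilon,z_\varepsilon)\to -2\pi$, and exponential decay estimates on the rescaled profile, obtained from the weighted Hardy inequality as in the preceding theorems, give $\log\sigma(\supp\curl\mathbf{v}_\varepsilon)/\log\varepsilon\to 1$ and the topological torus structure for $\varepsilon$ small.

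The main obstacle is the concentration-compactness step in the exterior of a general compact $K$: without symmetry beyond axisymmetry, securing strict subadditivity of the reduced functional and excluding minimizing sequences that drift to infinity in the $z$-direction requires a careful comparison with translated profiles exploiting the precise growth of $\lvert\psi\rvert$ at infinity, and one has to check separately that the relevant infimum of $\psi^2/r$ is not realized in a degenerate way on $\partial K$, where $\psi$ vanishes. Once this is settled, the remaining asymptotic and geometric estimates are essentially a transcription of the arguments already developed for Theorems~\ref{theoremVortexRingSpace}--\ref{theoremVortexRingOutsideBall}.
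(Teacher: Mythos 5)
Your overall architecture for the existence step matches the paper's: reduce to the weighted semilinear problem and recover compactness of Palais--Smale sequences by comparison with a problem at infinity, as in theorem~\ref{theoremVortexRingOutsideBall}. But the reason you give for excluding escape to infinity is not the right one. The dangerous direction is \(z\) at fixed \(r\), and there \(\abs{\psi}\) does \emph{not} grow: \(\psi(r,z)\to -\tfrac{W}{2}r^2 - k\) as \(\abs{z}\to\infty\), i.e.\ the problem converges to a genuinely translation-invariant limit with its own positive critical level \(c_\varepsilon^\infty\). What actually rules out drifting to infinity is the strict inequality \(c_\varepsilon < c_\varepsilon^\infty\) of proposition~\ref{propositionStrict}, which the paper obtains from the strong maximum principle comparison \(q < q^\infty\) in \(\R^2_+\setminus K\) (lemma~\ref{lemmaLinear}) evaluated at the minimizer of \((q^\infty)^2/r\), combined with the upper bound of proposition~\ref{ThUpperBound} and the identification \(\lim_{\varepsilon\to 0} c_\varepsilon^\infty/\log\tfrac{1}{\varepsilon} = \pi\inf (q^\infty)^2/r\). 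Note also that \(\psi\) does not vanish on \(\partial K\); it equals a negative constant \(k\) there, so the ``degeneracy on \(\partial K\)'' you flag rests on a false premise, while the genuinely delicate point (that the infimum of \(\psi^2/r\) is attained away from the limit level) is exactly what lemma~\ref{lemmaLinear} delivers.

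The asymptotics step has a genuine gap. You propose to blow up at scale \(\varepsilon\) around \(a_\varepsilon\) and pass to ``the unique positive ground state of \(-\Delta U = U_+^p\) on \(\R^2\)''. No such ground state exists: a positive superharmonic function on \(\R^2\) is constant, so \(-\Delta U = U^p\) with \(U>0\) decaying at infinity has no solution. The correct limiting object is a free-boundary profile growing like \(-c\log\abs{y}\) at infinity, and, more importantly, the leading term \(\pi\log\tfrac{1}{\varepsilon}\,\psi(a_\varepsilon)^2/r_\varepsilon\) of the energy is \emph{not} carried by the rescaled core at all: it lives in the annulus \(\varepsilon\lesssim\abs{x-a_\varepsilon}\lesssim 1\) where the stream function has a logarithmic tail. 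A blow-up analysis therefore cannot by itself localize \(a_\varepsilon\) at minimizers of \(\psi^2/r\). The paper instead works with the superlevel sets \(A_\varepsilon^\tau\) and capacity estimates (proposition~\ref{propositionAsymptotics}), together with integral identities, a Gagliardo--Nirenberg bound inside the core (proposition~\ref{ThBornesGrandeursPhysiques}) and a lower diameter bound (lemma~\ref{ThBorneInfDiametreAe}), to obtain both the localization and the two-sided estimate \(\log\sigma(\supp\curl\mathbf{v}_\varepsilon)/\log\varepsilon\to1\); there is no exponential decay in this problem. Finally, the claim that the vortex core is a topological torus requires proving that \(A_\varepsilon\) is connected and simply connected, which the paper obtains from a separate energy-comparison argument (lemma~\ref{lemmaAeConnecte}) that your sketch does not supply.
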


Note that given \(W > 0\), there are infinitely many \(\psi\) that satisfy the equation and the sign assumption (see lemma~\ref{lemmaLinear}), so that there are several families concentrating at different points with different asymptotic circulations.

In the case where \((r, z) \mapsto \frac{\psi (r, z)^2}{r}\) achieves its maximum at a unique interior point \((r_*, z_*)\), one has \((r_\varepsilon, z_\varepsilon) \to (r_*, z_*)\), and 
\begin{equation}
\label{eqVelocityRingCompact}
   \log \tfrac{1}{\varepsilon} \mathbf{v}_0 (r_*, z_*) = \frac{1}{r_*} \nabla \psi (r_*, z_*) \times \mathbf{e}_z = \frac{1}{2}\frac{\psi (r_*, z_*)}{r_*^2} \mathbf{e}_r \times \mathbf{e}_\theta = - \log \tfrac{1}{\varepsilon} \frac{1}{4 \pi r_*}
\lim_{\varepsilon \to 0} \kappa_\varepsilon \; \mathbf{e}_z,
\end{equation}
in accordance with \eqref{eqKelvinHick}.

\subsection{Vortices for the shallow water equation}
The same technique allows us to desingularize vortices for the shallow water equation with vanishing Froude number \(\mathrm{Fr}\) in the so-called lake model. 
The horizontal velocity \(\mathbf{v}\), the height \(h\) and the depth \(b\) satisfy the system  \citelist{\cite{CaHoLe96}\cite{CaHoLe97}}:
\begin{equation}
\label{eqLake}
\left\{
\begin{aligned}
  \dive (b \mathbf{v}) & = 0\\
  \partial_t \mathbf{v} + \mathbf{v} \cdot \nabla \mathbf{v} &= - \nabla h.
\end{aligned} 
\right.
\end{equation}
Richardson has computed by the method of matched asymptotics the velocity  of a vortex of circulation \(\kappa\) at \(x_*\) to be formally \cite{Ri00}*{(5.1)}
\footnote{Richardson writes the asymptotics in terms of \(\Gamma = \frac{\kappa}{2 \pi}\) \cite{Ri00}*{(2.19)}}
\begin{equation}
\label{eqVelocityRichardson}
   (\nabla \log b (x_*)) \times \frac{\kappa \mathbf{e}_3}{4 \pi}\log \tfrac{1}{\varepsilon}  + O (1);
\end{equation}
in particular, a vortex follows an isobath (level set of the depth).

We want to exhibit this in the asymptotics of families of steady flows.
As previously, setting \(\omega = \curl \mathbf{v}\), the second equation becomes
\[
  \partial_t \mathbf{v} + \omega \times \mathbf{v} = - \nabla \Bigl(\frac{\abs{\mathbf{v}}^2}{2} + h\Bigr). 
\]
Taking a stream function \(\psi\), one can write \(\mathbf{v} = (\curl \psi)/b\) and observe that 
if \(\omega = f (\psi)\), then \(\mathbf{v}\) is a stationary solution with \(h = F (\psi) - \frac{\abs{\mathbf{v}}^2}{2}\).
We are thus interested in studying the asymptotics of solutions of 
\begin{equation}
\label{problemSemilinearShallow}
\left\{
 \begin{aligned}
    - \dive \frac{1}{b} \nabla \psi_\varepsilon & = \frac{b}{\varepsilon^2} (\psi_\varepsilon)_+^p & & \text{in \(\Omega\)},\\
   \psi_\varepsilon & = \log \tfrac{1}{\varepsilon} \psi_0 & & \text{on \(\partial \Omega\)}.
 \end{aligned}
\right.
\end{equation}

\begin{theorem}
\label{theoremLakeMaximumDepth}
Let \(\Omega \subset \R^2\) be bounded and open and let \(b \in C (\Bar{\Omega}) \cap C^{1, \alpha} (\Omega)\) for some \(\alpha \in (0, 1)\).
If \(\inf_\Omega b > 0\), then there exists a family of solutions \(\mathbf{v}_{\varepsilon} \in C^1 (\Omega; \R^2)\) and \(h_\varepsilon \in C^1 (\Omega)\) of 
\[
\left\{
\begin{aligned}
  \dive (b \mathbf{v}_\varepsilon) & = 0 & & \text{in \(\Omega\)},\\
  \mathbf{v}_\varepsilon \cdot \nabla \mathbf{v}_\varepsilon &= - \nabla h_\varepsilon & & \text{in \(\Omega\)},\\
  \mathbf{v}_\varepsilon \cdot \mathbf{n} & = 0 & & \text{on \(\partial \Omega\)}.
\end{aligned} 
\right.
\]
Moreover if \(\kappa_\varepsilon = \int_{\Omega} \curl \mathbf{v}_\varepsilon\) and \(\curl \mathbf{v}_\varepsilon (x_\varepsilon) \ne 0\), then 
\begin{gather*}
 \lim_{\varepsilon \to 0} \kappa_\varepsilon = \kappa,\\
  \lim_{\varepsilon \to 0} b (x_\varepsilon) = \sup_{\Omega} b,\\
 \lim_{\varepsilon \to 0} \frac{\log \diam \supp \curl \mathbf{v}_\varepsilon}{\log \varepsilon} = 0.
\end{gather*}
\end{theorem}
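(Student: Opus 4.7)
The proof follows the stream-function and variational programme already used in the paper for the axisymmetric vortex rings, adapted to the two-dimensional lake geometry with the weight $b$ in place of the weight $r$. Since $\dive(b\mathbf{v}_\varepsilon)=0$ on $\Omega$, we may write $b\mathbf{v}_\varepsilon=\nabla^{\perp}\psi_\varepsilon$; the no-flux condition $\mathbf{v}_\varepsilon\cdot\mathbf{n}=0$ fixes $\psi_\varepsilon$ to be locally constant on $\partial\Omega$, and we normalise $\psi_\varepsilon=0$ there. Under the potential-vorticity ansatz $\curl\mathbf{v}_\varepsilon=bf(\psi_\varepsilon)$, the nonlinear term $\mathbf{v}_\varepsilon\cdot\nabla\mathbf{v}_\varepsilon$ becomes a gradient, and setting $h_\varepsilon=F(\psi_\varepsilon)-|\mathbf{v}_\varepsilon|^2/2$ with $F'=f$ produces a stationary solution of \eqref{eqLake}. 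Taking $f(s)=(s)_+^p/\varepsilon^2$ reduces the construction to \eqref{problemSemilinearShallow} with $\psi_0\equiv 0$.

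Because $\inf_\Omega b>0$, the weighted norm $\int_\Omega|\nabla u|^2/b$ is equivalent to the standard $H_0^1$-norm, and the functional
\[
J_\varepsilon(u)=\int_\Omega\frac{|\nabla u|^2}{2b}-\frac{b}{(p+1)\varepsilon^2}(u)_+^{p+1}
\]
has mountain-pass geometry on $H_0^1(\Omega)$ for any $p>1$. I would obtain $\psi_\varepsilon$ as a minimiser of $J_\varepsilon$ over the Nehari manifold $\nehari$; positivity in the support follows from the maximum principle, and $\psi_\varepsilon\in C^{1,\alpha}$ by elliptic regularity using $b\in C^{1,\alpha}(\Omega)$. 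The limiting circulation $\kappa$ will be prescribed by a multiplicative normalisation of $f$ which fixes the $L^1$-mass of the rescaled limit profile.

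To locate the concentration point, rescale around a maximum $x_\varepsilon$ of $\psi_\varepsilon$ via $y=(x-x_\varepsilon)/\varepsilon$. Freezing the coefficient at $b(x_\varepsilon)$, the rescaled problem formally passes to $-\Delta U=b(x_*)^2(U)_+^p$ on $\R^2$, whose ground-state Nehari energy is a strictly decreasing function of $b(x_*)$. A test-function upper bound for $J_\varepsilon$ built from a rescaled ground state placed near an interior maximiser of $b$, matched against a Nehari lower bound, then forces $b(x_\varepsilon)\to\sup_\Omega b$. Non-concentration on $\partial\Omega$ is excluded by boundary regularity of $\psi_\varepsilon$ together with the uniform lower bound on $b$. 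Finally, local energy estimates together with a Harnack-type argument applied to the rescaled profile yield $\diam\supp\curl\mathbf{v}_\varepsilon\to 0$ at the sub-polynomial rate $\log\diam/\log\varepsilon\to 0$.

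The main obstacle is the asymptotic analysis of the last step: the coefficient $b$ appears simultaneously in the divergence operator and in the nonlinearity, so the limit problem and its ground-state energy depend nontrivially on $b(x_*)$. Making the matching upper and lower energy bounds rigorous requires test functions that respect the $b$-weighted structure and careful control of the oscillation $b(x_\varepsilon+\varepsilon y)-b(x_\varepsilon)$ over the vortex scale, for which the $C^{1,\alpha}$-regularity of $b$ is decisive.
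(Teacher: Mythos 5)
Your reduction drops the one ingredient the whole construction hinges on: the $\log\frac{1}{\varepsilon}$ shift inside the nonlinearity. You set $\psi_0\equiv 0$, so your stream function solves the pure-power problem $-\dive(\nabla\psi_\varepsilon/b)=\frac{b}{\varepsilon^2}(\psi_\varepsilon)_+^p$ with zero boundary data. That problem is exactly solvable by scaling: writing $\psi_\varepsilon=\varepsilon^{2/(p-1)}v$ turns it into the $\varepsilon$-independent problem $-\dive(\nabla v/b)=b\,v_+^p$, whose least-energy solution $v$ is a fixed function, positive on all of $\Omega$ by the strong maximum principle. Consequently $\supp\curl\mathbf{v}_\varepsilon=\Bar{\Omega}$ for every $\varepsilon$ (no concentration at all, so the diameter conclusion fails) and $\kappa_\varepsilon=\varepsilon^{2/(p-1)}\int_\Omega b\,v_+^p\to 0$; since $f$ is homogeneous, no multiplicative renormalisation can repair both defects simultaneously. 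This is precisely the degeneration the introduction attributes to the earlier constructions of Ambrosetti--Yang and Yang. The same omission sinks the blow-up step: your proposed limit equation $-\Delta U=b(x_*)^2U_+^p$ on $\R^2$ has no positive finite-energy solution for $p>1$ (Gidas--Spruck / Pohozaev), so there is no ground-state Nehari energy to compare, decreasing in $b(x_*)$ or otherwise.

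What the paper actually does is take $q(x)=\frac{\kappa}{2\pi}\sup_\Omega b$, a \emph{constant}, and solve \eqref{problemP} with the truncated nonlinearity $(u_\varepsilon-q_\varepsilon)_+^p$, where $q_\varepsilon=\log\frac{1}{\varepsilon}\,q$ diverges; existence is the routine bounded-domain mountain pass of proposition~\ref{propositionExistenceBounded} (your observation that $\inf_\Omega b>0$ makes the weighted norm equivalent to the $H^1_0$ norm is correct and is all that is needed there). The vortex core is then the set where $u_\varepsilon$ exceeds the diverging threshold $q_\varepsilon$, and its smallness and the energy level $\pi\log\frac{1}{\varepsilon}\inf_\Omega q^2/b$ come from logarithmic capacity estimates, not from a rescaled PDE profile. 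All three asymptotic claims read off proposition~\ref{propositionAsymptotics}: item (a) gives $q(x_\varepsilon)^2/b(x_\varepsilon)\to\inf_\Omega q^2/b$, which for constant $q$ is exactly $b(x_\varepsilon)\to\sup_\Omega b$; item (b) gives $\kappa_\varepsilon\,b(x_\varepsilon)/q\to 2\pi$, i.e.\ $\kappa_\varepsilon\to\kappa$; item (c) gives the diameter asymptotics. If you want to salvage your argument you must reinstate the truncation at level $\log\frac{1}{\varepsilon}q$; the localisation at maxima of $b$ is then a consequence of minimising $q^2/b$ over $\Omega$, not of comparing ground-state energies of frozen-coefficient limit problems.
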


In particular, if \(\lim_{n \to \infty} x_{\varepsilon_n} = x_* \in \Bar{\Omega}\) for some sequence \((\varepsilon_n)_{n \in \N}\), then \(x_*\) is a maximum point of \(b\) on \(\Bar{\Omega}\). 
If \(x_* \in \Omega\), then \(\nabla (\log b) (x_*) = 0\) and the velocity given by \eqref{eqVelocityRichardson} vanishes.
If \(x_* \in \partial \Omega\), then \(\nabla (\log b)\) is normal to the boundary so that the velocity given by \eqref{eqVelocityRichardson} is tangential to the boundary and would lead the vortex to circulate around \(\partial \Omega\) in the orientation opposite to the vortex's orientation; there should however be, as for the two-dimensional Euler equation \cite{SmVS10}, an interaction of the vortex with the boundary that should give a compensating term 
\[
    \frac{\kappa}{4 \pi} \log \frac{1}{\dist (\supp \curl \mathbf{v}_\varepsilon, \partial \Omega)}.
\]
If \(b\) is constant, theorem~\ref{theoremLakeMaximumDepth} does not locate the vortex; the refined asymptotics for the Euler equation locate them at maxima of the Robin function of \(\Omega\) \cite{SmVS10}.

theorem~\ref{theoremLakeMaximumDepth} constructs vortices at stationary points. 
We can also desingularize vortices at other points by prescribing the boundary condition.
First we note that if \(\psi_0\) satisfies 
\[
 - \dive \Bigl( \frac{\nabla \psi_0}{b} \Bigr) = 0,
\]
then \(\mathbf{v}_0 = \curl \psi_0\) is an irrotational stationary solution of \eqref{eqLake}.

\begin{theorem}
\label{theoremLakeStrongWind}
Let \(\Omega \subset \R^2\) be bounded and open, let \(b \in C (\Bar{\Omega}) \cap C^{1, \alpha} (\Omega)\) for some \(\alpha \in (0, 1)\), let \(\psi_0 \in C^2 (\Omega) \cap C^1 (\Bar{\Omega})\) be such that 
\[
 - \dive \Bigl( \frac{\nabla \psi_0}{b} \Bigr) = 0
\]
and let \(\mathbf{v}_0 = \curl \psi_0\).
If \(\sup \psi_0 < 0\) and \(\inf_\Omega b > 0\), then there exists a family of solutions \(\mathbf{v}_{\varepsilon} \in C^1 (\Omega; \R^2)\) and \(h_\varepsilon \in C^1 (\Omega)\) of 
\[
\left\{
\begin{aligned}
  \dive (b \mathbf{v}_\varepsilon) & = 0 & & \text{in \(\Omega\)},\\
  \mathbf{v}_\varepsilon \cdot \nabla \mathbf{v}_\varepsilon &= - \nabla h_\varepsilon & & \text{in \(\Omega\)},\\
  \mathbf{v}_\varepsilon \cdot \mathbf{n} & = \mathbf{v}_0 \cdot \mathbf{n} \log \tfrac{1}{\varepsilon} & & \text{on \(\partial \Omega\)},
\end{aligned} 
\right.
\]
such that if \(\kappa_\varepsilon = \int_{\Omega} \curl \mathbf{v}_\varepsilon\) and  \(\curl \mathbf{v} (x_\varepsilon ) \ne 0\),
\begin{gather*}
 \lim_{\varepsilon \to 0} \frac{b (x_\varepsilon )}{\psi_0 (x_\varepsilon )} \kappa_\varepsilon = - 2 \pi,\\ 
  \lim_{\varepsilon \to 0} \frac{b (x_\varepsilon )}{\psi_0 (x_\varepsilon )^2} = \sup_{\Omega} \frac{b}{\psi_0{}^2},\\
 \lim_{\varepsilon \to 0} \frac{\log \diam \supp \curl \mathbf{v}_\varepsilon}{\log \varepsilon} = 0,
\end{gather*}
\end{theorem}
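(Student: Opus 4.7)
The plan is to transplant the shallow water problem into the semilinear framework of the abstract and then to carry out the asymptotic analysis by variational methods, the argument paralleling that for theorem~\ref{theoremLakeMaximumDepth}. The main novelty here is that the nontrivial Dirichlet datum generates the obstacle term \(-\log \tfrac{1}{\varepsilon}\, q\) inside the nonlinearity, which displaces the concentration locus from maxima of \(b\) alone to maxima of \(b/\psi_0^2\).

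First, I would reduce to the semilinear equation by setting \(u_\varepsilon = \psi_\varepsilon - \log \tfrac{1}{\varepsilon}\, \psi_0\) and \(q = -\psi_0\). The hypothesis \(\sup_\Omega \psi_0 < 0\) gives \(\inf_\Omega q > 0\); the prescribed flux \(\mathbf{v}_\varepsilon \cdot \mathbf{n} = \mathbf{v}_0 \cdot \mathbf{n} \log \tfrac{1}{\varepsilon}\) translates, up to an irrelevant additive constant of the stream function on each connected component of \(\partial \Omega\), into \(u_\varepsilon = 0\) on \(\partial \Omega\); and \(\dive (\nabla \psi_0/b) = 0\) turns the semilinear equation on \(\psi_\varepsilon\) into the Dirichlet problem
\[
-\dive \Bigl(\frac{\nabla u_\varepsilon}{b}\Bigr) = \frac{b}{\varepsilon^2} \bigl(u_\varepsilon - \log \tfrac{1}{\varepsilon}\, q\bigr)_+^p \quad \text{in \(\Omega\),} \qquad u_\varepsilon = 0 \text{ on \(\partial \Omega\),}
\]
which is exactly the problem of the abstract with \(f(s) = s_+^p\). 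Once \(u_\varepsilon\) is produced, the pair \(\mathbf{v}_\varepsilon = \curl \psi_\varepsilon / b\) and \(h_\varepsilon = F(\psi_\varepsilon) - \tfrac{1}{2}\abs{\mathbf{v}_\varepsilon}^2\) with \(F'(s) = s_+^p/\varepsilon^2\) recovers a steady solution of the lake system.

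Next, I would establish existence of \(u_\varepsilon\) for each fixed \(\varepsilon > 0\) by minimizing
\[
I_\varepsilon(u) = \frac{1}{2} \int_\Omega \frac{\abs{\nabla u}^2}{b} - \frac{1}{\varepsilon^2(p+1)} \int_\Omega b \bigl(u - \log \tfrac{1}{\varepsilon}\, q\bigr)_+^{p+1}
\]
on the Nehari manifold \(\nehari\). Since \(\inf_\Omega b > 0\), the weighted Dirichlet seminorm is equivalent to the usual \(H^1_0(\Omega)\)-norm, and since \(q > 0\) the shifted nonlinearity is superlinear in \(u\), so \(\nehari\) is a well-defined \(C^1\) manifold and the standard Nehari/mountain-pass scheme, for subcritical \(p\), produces a critical point at the level \(c_\varepsilon = \inf_{\nehari} I_\varepsilon\). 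Elliptic regularity using \(b \in C^{1,\alpha}\) then yields the claimed \(C^1\) smoothness of \(\mathbf{v}_\varepsilon\) and \(h_\varepsilon\).

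The three asymptotic statements are then extracted from an analysis of the level \(c_\varepsilon\). Testing \(I_\varepsilon\) on a bump function concentrated at an arbitrary \(x \in \Omega\), with amplitude and scale optimized in \(\varepsilon\), produces an upper bound whose leading dependence on \(x\) is a negative power of \(b(x)/\psi_0(x)^2\); optimizing in \(x\) selects maximizers of \(b/\psi_0^2\). A matching lower bound, obtained by local rescaling around any point \(x_\varepsilon\) of the free boundary \(\{u_\varepsilon > \log\tfrac{1}{\varepsilon}\, q\}\), forces the concentration to occur at such a maximizer and yields the second limit. Integrating the semilinear equation against \(u_\varepsilon/\log \tfrac{1}{\varepsilon}\) and identifying the rescaled profile gives the circulation limit \(b(x_\varepsilon)\kappa_\varepsilon/\psi_0(x_\varepsilon) \goesto -2\pi\), while a capacity/Hardy argument, already used in the other theorems of the paper, yields the weak diameter bound \(\log \diam \supp \curl \mathbf{v}_\varepsilon / \log \varepsilon \goesto 0\). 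The hardest step will be the sharp lower bound on \(c_\varepsilon\) together with the concentration analysis when the maximum of \(b/\psi_0^2\) is attained on \(\partial \Omega\): one must rule out escape of mass to the boundary while simultaneously handling the unbounded shift \(\log \tfrac{1}{\varepsilon}\, q\), and this is precisely where the truncation scheme, Hardy inequality and capacity estimates announced in the keywords are indispensable.
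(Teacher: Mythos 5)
Your proposal follows essentially the same route as the paper: set \(q=-\psi_0\) so that the problem becomes exactly \eqref{problemP}, obtain \(u_\varepsilon\) by the mountain pass/Nehari scheme on the bounded domain (proposition~\ref{propositionExistenceBounded}), reconstruct \((\mathbf{v}_\varepsilon,h_\varepsilon)\) from the stream function, and read off the three limits from the general asymptotic analysis (upper bound by concentrated test functions, lower bound and diameter control by capacity estimates, circulation by the integral identities), which is precisely propositions~\ref{ThUpperBound} and~\ref{propositionAsymptotics}. The only cosmetic difference is that the paper has already packaged the concentration analysis (including possible concentration on \(\partial\Omega\)) into section~\ref{sectionAsymptotics} under hypotheses \((\mathcal{A}_1)\)--\((\mathcal{A}_5)\), so its proof of this theorem reduces to checking those hypotheses for \(q=-\psi_0\).
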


In particular, if \(x_{\varepsilon_n} \to x_* \in \Omega\), then \(x_*\) is a maximum point of \(b/\psi_0^2\) on \(\Omega\) and 
\[
\frac{\nabla \psi_0 (x_*)}{b (x_*)} = \frac{1}{2} \frac{\nabla b (x_*)}{b (x_*)^2} \psi_0 (x_0)
\]
so that, similarly to \eqref{eqVelocityRingCompact},
\[
 \log \tfrac{1}{\varepsilon} \mathbf{v}_0 (x_*) = - \log \tfrac{1}{\varepsilon}\frac{\bigl(\nabla (\log b) (x_*) \bigr)}{4 \pi} \times \bigl(\lim_{\varepsilon \to 0} \kappa_\varepsilon \ \mathbf{e}_3\bigr),
\]
which is consistent with Richardson's formula \eqref{eqVelocityRichardson}.

The sequel of the paper is organized as follows. In section~\ref{sectionConstruction} we give sufficient conditions for the existence of solutions to \eqref{problemSemilinearShallow} that include \eqref{problemSemilinearRing} as particular cases. Next we study in section~\ref{sectionAsymptotics} the asymptotics of families of least energy solutions to those equations. Finally, we show in section~\ref{sectionConstruction} how the sufficient conditions for existence and the asymptotics can be combined to prove the theorems of the present section. 

\section{Construction of solutions}
\label{sectionConstruction}

\subsection{Preliminaries}

In order to have homogeneous boundary conditions, we rewrite problem~\eqref{problemSemilinearRing} and \eqref{problemSemilinearShallow} by defining \(q = - \psi_0\), \(q_\varepsilon = (\log \tfrac{1}{\varepsilon}) q\) 
and \(u_\varepsilon = \psi_\varepsilon + q_\varepsilon\). We are thus interested in solving
\begin{equation} 
\label{problemP}
\tag{$\mathcal{P}$}
\left\{
\begin{aligned}
-\dive \Bigl(\frac{\nabla u_{\varepsilon}}{b}\Bigr) &=  \frac{b}{\varepsilon^2} (u_{\varepsilon} - q_\varepsilon)^p_+  & &\text{ in \(\Omega\)}, \\
u_{\varepsilon} & = 0 & &\text{ on \(\partial \Omega\)},
\end{aligned}
\right.
\end{equation}
for \(\Omega \subset \R^2\) open, \(b : \Omega \to \R\) and \(q : \Omega \to \R\) measurable functions and for some fixed \(p > 1\).

Solutions to \eqref{problemP} are critical points of the  functional
\begin{equation*}
\mathcal{E}_{\varepsilon} (u) = \frac{1}{2} \int_{\Omega} \frac{1}{b} \abs{\nabla u}^2 - \frac{1}{(p + 1)\varepsilon^2} \int_{\Omega} b \ (u-q_\varepsilon)^{p + 1}_+,
\end{equation*}
defined for \(u \in C^\infty_c(\Omega) = \{u \in C^{\infty}(\Omega) : \supp u \text{ is compact in \(\Omega\)}\}\).
A natural space for this functional is the completion \(H^1_0 (\Omega, b)\) of \(C^\infty_c (\Omega)\) with respect to the norm defined for \(u \in C^\infty_c(\Omega)\) by 
\begin{equation*}
\norm{u}^2_{H^1_0 (\Omega, b)} = \int_{\Omega} \frac{\abs{\nabla u}^2}{b}.
\end{equation*}
In general \(H^1_0 (\Omega, b)\) needs not to be a space of distributions; but whenever the functional \(\mathcal{E}_{\varepsilon}\) has a well-defined extension to \(H^1_0 (\Omega, b)\), this space will be a well-defined space of locally integrable functions.

If \(\mathcal{E}_{\varepsilon}\) is continuously Fr\'echet--differentiable on \(H^1_0 (\Omega, b)\), we have the useful computation:

\begin{lemma}
\label{lemmeGradientEnergie}
Let \(\varepsilon \in (0, 1)\). If \(\mathcal{E}_{\varepsilon} \in C^1 (H^1_0 (\Omega, b); \R)\) and \(q \gt 0\), then for every \(u \in H^1_0 (\Omega, b)\),
\begin{equation*}
\Bigl( \frac{1}{2} - \frac{1}{p+1} \Bigr) \int_{\Omega} \frac{\abs{\nabla u}^2}{b}  \lt \mathcal{E}_{\varepsilon} (u) - \frac{1}{p + 1} \dualprod{\mathcal{E}_{\varepsilon}' (u)}{u}.
\end{equation*}
\end{lemma}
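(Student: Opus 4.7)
The plan is to compute $\mathcal{E}_\varepsilon(u) - \frac{1}{p+1}\langle \mathcal{E}'_\varepsilon(u), u\rangle$ directly and show that the nonlinear contribution is nonnegative, leaving only the quadratic term.

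First I would write out the derivative. Under the $C^1$ hypothesis, for every $v \in H^1_0(\Omega, b)$ one has
\[
\dualprod{\mathcal{E}_\varepsilon'(u)}{v} = \int_\Omega \frac{\nabla u \cdot \nabla v}{b} - \frac{1}{\varepsilon^2}\int_\Omega b\,(u - q_\varepsilon)_+^p \,v,
\]
which on smooth, compactly supported test functions follows from differentiating under the integral and then extends by density and continuity to all of $H^1_0(\Omega, b)$. Taking $v = u$ and subtracting $\frac{1}{p+1}\dualprod{\mathcal{E}_\varepsilon'(u)}{u}$ from $\mathcal{E}_\varepsilon(u)$, the Dirichlet-type terms combine to give the coefficient $\bigl(\tfrac{1}{2} - \tfrac{1}{p+1}\bigr)\int_\Omega \frac{|\nabla u|^2}{b}$.

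The second key step is to collect the two nonlinear pieces. They combine into
\[
\frac{1}{(p+1)\varepsilon^2}\int_\Omega b\,(u - q_\varepsilon)_+^p\,\bigl[u - (u - q_\varepsilon)\bigr]
= \frac{1}{(p+1)\varepsilon^2}\int_\Omega b\,(u - q_\varepsilon)_+^p\,q_\varepsilon,
\]
using the algebraic identity $u(u-q_\varepsilon)_+^p - (u-q_\varepsilon)_+^{p+1} = q_\varepsilon (u-q_\varepsilon)_+^p$, which holds pointwise (whether or not $u - q_\varepsilon$ is positive). Since $\varepsilon \in (0,1)$ gives $\log \tfrac{1}{\varepsilon} > 0$ and by hypothesis $q \gt 0$, we have $q_\varepsilon \gt 0$ almost everywhere; as $b \gt 0$ and $(u-q_\varepsilon)_+^p \gt 0$, the integrand is nonnegative, so the whole nonlinear contribution is nonnegative. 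Dropping it yields the claimed inequality.

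The only real subtlety is that $u$ lies in the abstract completion $H^1_0(\Omega, b)$, which need not embed into a standard Lebesgue space; one must therefore interpret the integral $\int_\Omega b(u-q_\varepsilon)_+^p q_\varepsilon$ through the functional $\mathcal{E}_\varepsilon$ itself, which is well defined by assumption. Apart from this mild point, the lemma reduces to the standard Nehari-type identity, and no further analytical input is required.
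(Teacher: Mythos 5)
Your proof is correct and follows essentially the same route as the paper: expand \(\mathcal{E}_{\varepsilon}(u) - \frac{1}{p+1}\dualprod{\mathcal{E}_{\varepsilon}'(u)}{u}\), observe that the nonlinear remainder equals \(\frac{1}{(p+1)\varepsilon^2}\int_\Omega b\,(u-q_\varepsilon)_+^p\,q_\varepsilon \gt 0\) since \(q_\varepsilon \gt 0\), and drop it. (The paper phrases the positivity as \((u-q_\varepsilon)_+ \lt u\) on the relevant set, which is the same observation.)
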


\begin{proof}
For \(u \in H^1_0 (\Omega, b)\), we compute
\[
\begin{split}
 \mathcal{E}_{\varepsilon} (u) - \frac{1}{p + 1} \dualprod{\mathcal{E}_{\varepsilon}' (u)}{u} = \ & \Bigl( \frac{1}{2} - \frac{1}{p+1} \Bigr) \int_{\Omega} \frac{\abs{\nabla u}^2}{b}\\
& +\frac{1}{(p + 1) \varepsilon^2} \int_\Omega b \bigl((u - q_\varepsilon)^{p + 1}_+- (u - q_\varepsilon)_+^p u \bigr).
\end{split}
\]
The bound follows as \(q_\varepsilon \gt 0\) and thus \((u - q_\varepsilon)_+ \lt u\).
\end{proof}

The Nehari manifold associated to the problem \eqref{problemP} is defined as
\begin{equation*}
\nehari = \bigl\{ u \in H^1_0 (\Omega, b) \setminus \{ 0\} :  \dualprod{ \mathcal{E}_{\varepsilon}'(u)}{u } = 0 \bigr\}
\end{equation*}
and the infimum of the energy on this manifold is
\begin{equation*}
c_{\varepsilon} = \inf_{u \in \nehari} \mathcal{E}_{\varepsilon}(u).
\end{equation*}
It can be characterized as follows:

\begin{lemma}
\label{lemmaCritical}
Let \(\varepsilon \in (0, 1)\). If \(\mathcal{E}_{\varepsilon} \in C^1 (H^1_0 (\Omega, b); \R)\), \(q \gt 0\) and 
\[
  \lim_{u \to 0} \frac{\displaystyle\int_{\Omega} (u - q)_+^{p + 1} }{\displaystyle\int_{\Omega} \frac{\abs{\nabla u}^2}{b}} = 0,
\]
then
\[
 c_{\varepsilon} = \inf_{u \in \nehari} \mathcal{E}_{\varepsilon}(u) 
= \inf_{u \in H^1_0 (\Omega, b) \setminus \{0\}} \sup_{t \gt 0} \mathcal{E}_{\varepsilon} (t u)
= \inf_{\gamma \in \Gamma_\varepsilon} \max_{t \in [0, 1]} \mathcal{E}_{\varepsilon} \bigl( \gamma (t)\bigr),
\]
where
\[
   \Gamma_\varepsilon = \Bigl\{ \gamma \in C\bigl([0, 1]; H^1_0 (\Omega, b)\bigr) : \gamma (0) = 0 \text{ and } \mathcal{E}_{\varepsilon} \bigl(\gamma(1)\bigr) < 0 \Bigr\}.
\]
Moreover there exists a sequence \((u^n)_{n \in \N}\) such that \(\mathcal{E}_{\varepsilon}(u^n) \goesto c_\varepsilon\) and \(\mathcal{E}_{\varepsilon}'(u^n) \goesto 0\) in \((H^1_0 (\Omega, b))'\) as \(n \goesto \infty\). 
\end{lemma}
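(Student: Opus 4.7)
My plan is to prove the classical three-way equivalence of the Nehari minimum, the ray supremum and the mountain pass level, and then invoke the Ambrosetti--Rabinowitz theorem to produce the Palais--Smale sequence.

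The first step is to analyse the fibering map \(\phi_u : (0, \infty) \to \R : t \mapsto \mathcal{E}_{\varepsilon} (tu)\) for \(u \in H^1_0 (\Omega, b) \setminus \{0\}\). When \(u_+ \equiv 0\), since \(q_\varepsilon \gt 0\) one has \((tu - q_\varepsilon)_+ \equiv 0\), so \(\phi_u\) is strictly increasing and its supremum is \(+\infty\); such rays contribute nothing to the infima. When \(u_+ \not\equiv 0\), I would check that \(\phi_u (0) = 0\); that the limit hypothesis forces \(\phi_u (t) > 0\) for small \(t > 0\); and that \(\phi_u (t) \to -\infty\) as \(t \to \infty\) because \(p + 1 > 2\) and \((tu - q_\varepsilon)_+^{p + 1}\) eventually grows like \(t^{p + 1}\) on a set of positive measure. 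Hence \(\phi_u\) achieves a positive maximum at some \(t_u > 0\), and the identity \(\phi_u' (t_u) = 0\) rewrites as \(t_u u \in \nehari\). Uniqueness of \(t_u\) would follow from the observation that
\[
 \frac{\phi_u' (t)}{t} = \norm{u}^2_{H^1_0 (\Omega, b)} - \frac{1}{\varepsilon^2} \int_\Omega b \, t^{p - 1} \bigl(u - \tfrac{q_\varepsilon}{t}\bigr)_+^p u
\]
is strictly decreasing as soon as the second term is nonzero, since \(p > 1\), \(q_\varepsilon \gt 0\) and on \(\{u > 0\}\) the integrand is monotone in \(t\).

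The ray-supremum equality follows directly: for any admissible \(u\), \(\sup_{t > 0} \mathcal{E}_{\varepsilon} (tu) = \mathcal{E}_{\varepsilon} (t_u u)\) with \(t_u u \in \nehari\), and conversely every element of \(\nehari\) is already the maximum of its own fibering map. For the mountain pass equality, one inequality is witnessed by the straight path \(\gamma (t) = t T u\) with \(T\) so large that \(\mathcal{E}_{\varepsilon} (T u) < 0\), along which \(\max_{[0, 1]} \mathcal{E}_{\varepsilon} \circ \gamma = \mathcal{E}_{\varepsilon} (t_u u)\). For the reverse inequality, I would argue that every \(\gamma \in \Gamma_\varepsilon\) must cross \(\nehari\): the continuous function \(t \mapsto \dualprod{\mathcal{E}_{\varepsilon}' (\gamma (t))}{\gamma (t)}\) is strictly positive near \(t = 0\) thanks to the limit hypothesis, and at \(t = 1\) it is strictly negative because lemma~\ref{lemmeGradientEnergie} combined with \(\mathcal{E}_{\varepsilon} (\gamma (1)) < 0\) gives \(\dualprod{\mathcal{E}_{\varepsilon}' (\gamma (1))}{\gamma (1)} \lt (p + 1) \mathcal{E}_{\varepsilon} (\gamma (1)) < 0\); the intermediate value theorem then provides some \(t_* \in (0, 1)\) with \(\gamma (t_*) \in \nehari\), whence \(\max_{[0, 1]} \mathcal{E}_{\varepsilon} \circ \gamma \gt \mathcal{E}_{\varepsilon} (\gamma (t_*)) \gt c_\varepsilon\).

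The existence of a Palais--Smale sequence at level \(c_\varepsilon\) then follows from the Ambrosetti--Rabinowitz mountain pass theorem \cite{AmRa73} applied to the \(C^1\) functional \(\mathcal{E}_{\varepsilon}\), whose mountain pass geometry is exactly what has been verified above. The technical point that I expect to require the most care is the translation of the limit hypothesis, which only controls \((u - q)_+^{p + 1}\), into both the positivity of \(\mathcal{E}_{\varepsilon}\) on a small punctured ball and the positivity of \(\dualprod{\mathcal{E}_{\varepsilon}'(u)}{u}\) for small \(u\); this should follow from the identity \((u - q_\varepsilon)_+^p u = (u - q_\varepsilon)_+^{p + 1} + q_\varepsilon (u - q_\varepsilon)_+^p\), Young's inequality to absorb the extra term, and the comparison \((u - q_\varepsilon)_+ \lt (u - q)_+\) valid for \(\varepsilon \in (0, 1)\).
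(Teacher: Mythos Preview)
Your strategy is the paper's: close the cycle of three inequalities via the path-crossing argument (using lemma~\ref{lemmeGradientEnergie} for the sign of \(\dualprod{\mathcal{E}_{\varepsilon}'(\gamma(1))}{\gamma(1)}\)), then obtain the Palais--Smale sequence from a mountain-pass theorem. The uniqueness of the fibering maximum that you establish is not actually needed once the cycle is closed, and the paper omits it.

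The one place where your sketch does not go through as written is the final technical paragraph. The comparison \((u - q_\varepsilon)_+ \lt (u - q)_+\) requires \(\log \tfrac{1}{\varepsilon} \gt 1\), i.e.\ \(\varepsilon \lt e^{-1}\), not all \(\varepsilon \in (0,1)\); and a straightforward Young inequality applied to \(q_\varepsilon (u - q_\varepsilon)_+^p\) produces a term like \(\int_\Omega b\, q_\varepsilon^{p+1}\) that does not vanish as \(u \to 0\). The paper bypasses both issues with the pointwise algebraic bound
\[
  (u - q_\varepsilon)_+^p\, u \lt (u - q_\varepsilon)_+^{p-1}\bigl(u - \tfrac{q_\varepsilon}{2}\bigr)^2 \lt \bigl(u - \tfrac{q_\varepsilon}{2}\bigr)_+^{p+1},
\]
the first inequality being the identity \((a-c)a \lt (a - c/2)^2\) for \(a > c \gt 0\). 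One then reduces \((u - \tfrac{q_\varepsilon}{2})_+^{p+1}\) to the hypothesis on \((u - q)_+^{p+1}\) by the rescaling \(u \mapsto u / \lambda\) with \(\lambda = \tfrac{1}{2}\log \tfrac{1}{\varepsilon}\), under which the ratio picks up only a factor \(\lambda^{p-1}\).
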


A sequence \((u^n)_{n \in \N}\) such that sequence \(\mathcal{E}_{\varepsilon}(u^n) \goesto c_\varepsilon\) and \(
\mathcal{E}_{\varepsilon}'(u^n) \goesto 0\) in \((H^1_0 (\Omega, b))'\) as \(n \goesto \infty\) is called a Palais-Smale sequence at the level \(c_{\varepsilon}\).

The equivalence between the different critical levels goes back to  Rabinowitz \citelist{\cite{Ra92}*{proposition~3.11}\cite{Wi96}*{theorem~4.2}}. 
The assumptions of lemma~\ref{lemmaCritical} do not fit into the existing results, but existing arguments still work.

\begin{proof}[Proof of lemma~\ref{lemmaCritical}]
For \(u \in \nehari\), and \(t \in [0, \infty)\), observe that 
\[
\begin{split}
 \mathcal{E}_{\varepsilon} (u)
& = \mathcal{E}_{\varepsilon} (t u) + \frac{1 - t^2}{2} \int_{\Omega} \frac{\abs{\nabla u}^2}{b} + \frac{1}{(p + 1) \varepsilon^2} \int_{\Omega} b \bigl((t u - q_\varepsilon)_+^{p + 1} - (u - q_\varepsilon)_+^{p + 1}\bigr)\\
& = \mathcal{E}_{\varepsilon} (tu) + \frac{1}{\varepsilon^2} \int_{\Omega} b \Bigl(\frac{(1- t^2)(u - q_\varepsilon)_+^{p}u}{2} + \frac{(t u - q_\varepsilon)_+^{p + 1} - (u - q_\varepsilon)_+^{p + 1}}{p + 1}\Bigr),
\end{split}
\]
from which one deduces since \(p \gt 1\) that \(\mathcal{E}_{\varepsilon} (t u) \gt \mathcal{E}_{\varepsilon} (u)\). This proves that 
\[
 \inf_{u \in H^1_0 (\Omega, b) \setminus \{0\}} \sup_{t \gt 0} \mathcal{E}_{\varepsilon} (t u) \lt \inf_{u \in \nehari} \mathcal{E}_{\varepsilon}(u).
\]
It is clear that 
\[
 \inf_{\gamma \in \Gamma_\varepsilon} \max_{t \in [0, 1]} \mathcal{E}_{\varepsilon} \bigl( \gamma (t)\bigr) \lt \inf_{u \in H^1_0 (\Omega, b) \setminus \{0\}} \sup_{t \gt 0} \mathcal{E}_{\varepsilon} (t u). 
\]
Let us now prove that 
\begin{equation}
\label{eqNehariPath}
   \inf_{u \in \nehari} \mathcal{E}_{\varepsilon}(u) \lt \inf_{\gamma \in \Gamma_\varepsilon} \max_{t \in [0, 1]} \mathcal{E}_{\varepsilon} \bigl( \gamma (t)\bigr).
\end{equation}
Let \(\gamma \in \Gamma_\varepsilon\) and define 
\(h \in C([0, 1]; \R)\) for \(t \in [0, 1]\) by \(h (t) = \dualprod{\mathcal{E}_{\varepsilon}' (\gamma(t))}{\gamma (t)}\). 
Since \(p \gt 1\), for every \(u \in H^1_0 (\Omega, b)\),
\[
  \int_{\Omega} b (u - q_\varepsilon)_+^p u 
\lt \int_{\Omega} b (u - q_\varepsilon)^{p - 1}_+ \bigl(u - \tfrac{q_\varepsilon}{2}\bigr)^2
\lt \int_{\Omega} b \bigl(u - \tfrac{q_\varepsilon}{2}\bigr)^{p + 1}_+,
\]
we have
\[
 \lim_{t \to 0} \frac{h (t)}{\displaystyle \int_{\Omega} \frac{\abs{\nabla \gamma (t)}^2}{b}} =  1,
\]
and thus \(h (t) > 0\) for \(t > 0\) close to \(0\).
On the other hand, by lemma~\ref{lemmeGradientEnergie}, since \(p \gt 1\),
\[
   \mathcal{E}_{\varepsilon} (u) \gt \frac{1}{p + 1} \dualprod{\mathcal{E}_{\varepsilon}' (u)}{u}.
\]
Hence, one has \(h (1) \lt (p + 1) \mathcal{E}_{\varepsilon} \bigl(\gamma (1)\bigr) < 0\). 
By the intermediate value theorem, there exists \(t_* \in [0, 1]\) such that \(h (t_*) = 0\) and thus \(\gamma (t_*) \in \nehari\). Therefore,
\[
 \inf_{u \in \nehari} \mathcal{E}_{\varepsilon}(u) \lt \mathcal{E}_{\varepsilon}\bigl(\gamma (t_*)\bigr) \lt \max_{t \in [0, 1]} \mathcal{E}_{\varepsilon} \bigl( \gamma (t)\bigr),
\]
and \eqref{eqNehariPath} follows.

The existence of the Palais-Smale sequence comes from a consequence of the quantitative deformation lemma \cite{Wi96}*{theorem~2.9}.
\end{proof}

\subsection{Existence in bounded domains}
In the case where \(\Omega\) and \(b\) are bounded, the existence of solutions to \eqref{problemP} is quite standard.

\begin{proposition}
\label{propositionExistenceBounded}
If \(\Omega \subset \R^2\) is bounded and \(b\) and \(b^{-1}\) are bounded, then for every \(\varepsilon \in (0, 1)\), there exists a weak solution \(u_{\varepsilon} \in H^1_0 (\R^2_+, b)\) of problem \eqref{problemP} such that \(\mathcal{E}_{\varepsilon} (u_{\varepsilon}) = c_\varepsilon\).
\end{proposition}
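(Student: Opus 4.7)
The plan is to implement the standard Nehari/mountain-pass machinery made available by lemmas~\ref{lemmeGradientEnergie} and~\ref{lemmaCritical}, the point being that the hypotheses \(0 < \inf b \lt \sup b < \infty\) and \(\Omega\) bounded make the weighted space \(H^1_0 (\Omega, b)\) equivalent to the ordinary \(H^1_0 (\Omega)\), so all the compactness usually available in the critical-point theory of semilinear problems on bounded planar domains is at our disposal.

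First I would check the functional-analytic setup: under the boundedness of \(b\) and \(b^{-1}\), \(\norm{\cdot}_{H^1_0 (\Omega, b)}\) is equivalent to the usual Dirichlet norm, so \(H^1_0 (\Omega, b)\) is a Hilbert space continuously embedded in every \(L^{r} (\Omega)\) with \(r < \infty\) by the planar Sobolev inequality, and compactly by Rellich's theorem. In particular \(u \mapsto \int_\Omega b (u - q_\varepsilon)_+^{p + 1}\) is of class \(C^1\) on \(H^1_0 (\Omega, b)\) and its derivative is compact, and the hypothesis of lemma~\ref{lemmaCritical} on the vanishing ratio is satisfied because \(\int_\Omega (u - q)_+^{p + 1} \lt \int_\Omega u_+^{p + 1} \lt C \norm{u}_{H^1_0 (\Omega, b)}^{p + 1}\) with \(p + 1 > 2\).

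Next I would produce a Palais--Smale sequence. Lemma~\ref{lemmaCritical} directly furnishes \((u^n)_{n \in \N}\) with \(\mathcal{E}_\varepsilon (u^n) \to c_\varepsilon\) and \(\mathcal{E}'_\varepsilon (u^n) \to 0\) in the dual space. Lemma~\ref{lemmeGradientEnergie} then gives
\[
\Bigl( \frac{1}{2} - \frac{1}{p+1} \Bigr) \int_{\Omega} \frac{\abs{\nabla u^n}^2}{b}  \lt \mathcal{E}_{\varepsilon} (u^n) - \frac{1}{p + 1} \dualprod{\mathcal{E}_{\varepsilon}' (u^n)}{u^n} \lt c_\varepsilon + 1 + o (1) \norm{u^n}_{H^1_0 (\Omega, b)},
\]
which with \(p > 1\) forces \((u^n)\) to be bounded in \(H^1_0 (\Omega, b)\). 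Passing to a subsequence, \(u^n \weakto u_\varepsilon\) weakly in \(H^1_0 (\Omega, b)\) and, by the compactness of the Rellich embedding in dimension \(2\), \(u^n \to u_\varepsilon\) strongly in every \(L^r (\Omega)\) and almost everywhere.

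It then remains to identify \(u_\varepsilon\) as a nontrivial critical point at the level \(c_\varepsilon\). The strong \(L^{p + 1}\)-convergence combined with the uniform \(L^{p + 1}\) bound on \((u^n - q_\varepsilon)_+^p\) yields, by the dominated convergence theorem, that \(b (u^n - q_\varepsilon)_+^p \to b (u_\varepsilon - q_\varepsilon)_+^p\) in \(L^{(p + 1)/p} (\Omega)\); hence \(\dualprod{\mathcal{E}'_\varepsilon (u^n)}{\varphi} \to \dualprod{\mathcal{E}'_\varepsilon (u_\varepsilon)}{\varphi}\) for every \(\varphi \in H^1_0 (\Omega, b)\) and \(u_\varepsilon\) is a weak solution of \eqref{problemP}. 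To upgrade to strong convergence and recover \(\mathcal{E}_\varepsilon (u_\varepsilon) = c_\varepsilon\), I would test \(\mathcal{E}'_\varepsilon (u^n) \to 0\) against \(u^n\) itself: the nonlinear term passes to the limit as above, so \(\int_\Omega \abs{\nabla u^n}^2/b \to \int_\Omega \abs{\nabla u_\varepsilon}^2/b\); combined with the weak convergence this gives strong convergence in \(H^1_0 (\Omega, b)\), and the continuity of \(\mathcal{E}_\varepsilon\) yields \(\mathcal{E}_\varepsilon (u_\varepsilon) = c_\varepsilon\).

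The only genuine obstacle is to ensure \(u_\varepsilon \ne 0\), equivalently \(c_\varepsilon > 0\); this is where I expect a little care. Using \((u - q_\varepsilon)_+^{p + 1} \lt u_+^{p + 1}\) and the continuous embedding \(H^1_0 (\Omega, b) \hookrightarrow L^{p + 1} (\Omega)\) with constant \(S\), one has
\[
\mathcal{E}_\varepsilon (u) \gt \tfrac{1}{2} \norm{u}_{H^1_0 (\Omega, b)}^2 - \tfrac{S^{p + 1} \sup b}{(p + 1) \varepsilon^2} \norm{u}_{H^1_0 (\Omega, b)}^{p + 1},
\]
so \(\mathcal{E}_\varepsilon\) has the mountain pass geometry around \(0\) with a positive barrier; the min-max characterization in lemma~\ref{lemmaCritical} then forces \(c_\varepsilon > 0\). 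To invoke that characterization one must also produce some \(\gamma \in \Gamma_\varepsilon\), which follows by choosing any nonzero \(v \gt q_\varepsilon\) on a set of positive measure and observing that \(\mathcal{E}_\varepsilon (t v) \to -\infty\) as \(t \to \infty\) because the nonlinear term grows like \(t^{p + 1}\) with \(p > 1\). Combining these facts with the strong convergence of the PS sequence gives a nontrivial least-energy solution, completing the proof.
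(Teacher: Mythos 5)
Your proof is correct and takes essentially the same route as the paper: the paper's (sketched) proof simply verifies that the nonlinearity is Carath\'eodory, has subcritical growth and satisfies the Ambrosetti--Rabinowitz condition \(0 \lt (p+1)F(x,s) \lt s f(x,s)\), and then invokes the mountain pass theorem of \cite{Ra86}, whereas you unpack that citation by hand --- Palais--Smale sequence from lemma~\ref{lemmaCritical}, boundedness from lemma~\ref{lemmeGradientEnergie}, Rellich compactness to pass to the limit, and the mountain-pass geometry to get \(c_\varepsilon > 0\). The underlying argument is identical, so no further comparison is needed.
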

\begin{proof}[Sketch of the proof]
Define for \(x \in \Omega\) and \(s \in \R\)
\[
 f (x, s) = \frac{b (x)}{\varepsilon^2} \bigl(s - q_\varepsilon (x)\bigr)_+^p,
\]
and
\[
 F (x, s) = \int_0^s f (x, t)\du t = \frac{b (x) \bigl(s - q_\varepsilon (x)\bigr)_+^{p + 1}}{(p + 1)\varepsilon^2}.
\]
The function \(f\) is a Carath\'edory function and for every \(s \in \R\) and \(x \in \Omega\), since \(q \gt 0\),
\begin{gather*}
 \abs{f (x, s)} \lt \frac{\sup_{\Omega} b}{\varepsilon^2} \abs{s}^p, \\
 0 \lt (p + 1) F (x, s) \lt s f(x, s).
\end{gather*}
Hence, the problem has a weak solution by the mountain pass theorem \cite{Ra86}*{theorem~2.15}.
\end{proof}

\subsection{Existence in unbounded domains}
In unbounded domains, we prove the existence following the ideas of the concentration-compactness method of P.-L. Lions \citelist{\cite{Li84}\cite{Ra92}}. The existence will depend on the geometry of \(\Omega\), \(b\) and \(q\).

\subsubsection{Sobolev inequalities for truncated functions in unbounded domains}
In order to show that the functional \(\mathcal{E}_{\varepsilon}\) is well-defined on \(H^1_0 (\R^n, b)\) and admits critical points, we first study its nonlinear term.
We begin by proving a weighted Sobolev inequality.

\begin{lemma} 
\label{lemmaSobolevAvecPoids}
Let \(q \gt 2\), \(\alpha > - 1\) and \(\beta \in \R\).
If
\begin{equation*}
\frac{\beta - 2}{q} = \frac{\alpha}{2},
\end{equation*} 
then there exists \(C>0\) such that for every \(u \in H^1_0 (\R^2_+, x_1^{-\alpha})\), 
\begin{equation*}
\int_{\R^2_+} \frac{\abs{u(x)}^q}{x_1^{\beta}} \du x \lt C \Bigl(\int_{\R^2_+} \frac{\abs{\nabla u(x)}^2}{x_1^{\alpha}} \du x   \Bigr)^\frac{q}{2}.
\end{equation*}
\end{lemma}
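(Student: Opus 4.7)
The inequality is a weighted Sobolev (Caffarelli--Kohn--Nirenberg type) estimate on the half-plane. The relation \((\beta-2)/q = \alpha/2\) is precisely the condition for dilation invariance of both sides under \(u \mapsto u(\lambda\cdot)\): the left-hand side scales as \(\lambda^{\beta-2}\) and the right-hand side as \(\lambda^{q\alpha/2}\). Any other relation would force \(C=+\infty\) by sending \(\lambda\to 0\) or \(\lambda\to+\infty\), so the proof is necessarily scaling-invariant.

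The plan is first to reduce, by density, to the case \(u \in C^\infty_c(\R^2_+)\). The assumption \(\alpha > -1\) makes \(x_1^{\alpha+1}\) integrable near \(\{x_1=0\}\), so smooth compactly supported functions form a dense natural subclass on which both sides of the inequality are finite, and the full statement follows by routine approximation once it is established for smooth test functions. The main step is then a reduction to the classical unweighted Sobolev embedding on a higher-dimensional Euclidean space through axisymmetric extension. The model case is when \(N := 2-\alpha\) is an integer \(\geq 2\): define \(v \colon \R^{N+1} \to \R\) by \(v(y',y_{N+1}) = u(|y'|, y_{N+1})\) for \(y' \in \R^N\). Integrating in spherical coordinates on the \(y'\) variables yields
\[
\int_{\R^{N+1}} \abs{\nabla v}^2\,\du y \;=\; \bigabs{S^{N-1}} \int_{\R^2_+} \frac{\abs{\nabla u}^2}{x_1^{\alpha}}\,\du x, \qquad \int_{\R^{N+1}} \abs{v}^r\,\du y \;=\; \bigabs{S^{N-1}} \int_{\R^2_+} \frac{\abs{u}^r}{x_1^{\alpha}}\,\du x,
\]
and the classical Sobolev embedding \(\dot H^1(\R^{N+1}) \hookrightarrow L^{2(N+1)/(N-1)}(\R^{N+1})\) proves the claim at the critical exponent \(q=2(N+1)/(N-1)\) and \(\beta=\alpha\); subcritical \(q\) and general \(\beta\) compatible with the scaling relation follow from this critical case by H\"older's inequality.

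For general \(\alpha > -1\) that falls outside this integer range, the same strategy goes through either by passing to the nearest larger integer \(N\) and adjusting the weights via a one-dimensional Hardy inequality on the half-line \((0,\infty)\) in the \(x_1\) variable, or, more directly, by applying a Bliss-type one-dimensional weighted Sobolev inequality slice by slice in \(x_2\) (using \(u(0,x_2)=0\)) and then reassembling the estimate through Minkowski's integral inequality. The hypothesis \(\alpha > -1\) is exactly what makes the 1D weighted inequality valid with a finite sharp constant. The main obstacle is matching the homogeneity \(q/2\) on the right-hand side when moving from slicewise to global estimates: this forces a careful interpolation between the weighted inequality in \(x_1\) and a Sobolev embedding in \(x_2\), with the powers dictated by the scaling relation \((\beta-2)/q=\alpha/2\).
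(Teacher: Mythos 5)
There is a genuine gap, and it sits exactly where your argument needs to do its work. First, the model case is miscalibrated: with \(v(y',y_{N+1})=u(\abs{y'},y_{N+1})\) and \(y'\in\R^N\), polar coordinates produce the weight \(x_1^{N-1}\) in both integrals, so matching \(x_1^{-\alpha}\) forces \(N=1-\alpha\), not \(N=2-\alpha\); and since the lemma assumes \(\alpha>-1\), this gives \(N<2\), so the ``integer \(N\gt 2\)'' model case is empty under the hypotheses. The isometries from the literature (cited after lemma~\ref{lemmaSobolevTronque}: \(\alpha=1\) with \(H^1(\R^5)\), \(\alpha=0\) with \(H^1(\R^4)\)) first conjugate by a power of \(x_1\), writing \(u=x_1^{\gamma}w\) before extending \(w\) axisymmetrically; your map omits this, and the claimed identities for \(\int\abs{\nabla v}^2\) and \(\int\abs{v}^r\) are false as stated. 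Second, and more seriously, even a correctly executed reduction to \(\dot H^1(\R^{N+1})\) yields only the single critical pair \(q_c=2(N+1)/(N-1)\), \(\beta=\alpha\) (after conjugation, a shifted \(\beta\)). Interpolation with the Hardy endpoint \(q=2\), \(\beta=\alpha+2\) then covers \(q\in(2,q_c)\), but no H\"older argument reaches \(q>q_c\): \(\dot H^1(\R^{N+1})\) does not embed into \(L^q\) for supercritical \(q\), so the unweighted Sobolev embedding cannot be the engine there. The lemma is needed for arbitrarily large \(q\) (in lemma~\ref{lemmaSobolevTronque} one takes \(q=2\frac{r(\alpha+1)-(\beta-2)}{\alpha+2}\) with \(r=p+1\), which already exceeds \(q_c\) for \(\alpha=1\), \(\beta=-1\), \(p>1\)). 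Your closing sentence concedes that matching the homogeneity is the ``main obstacle'' and defers it to ``a careful interpolation'' --- that deferred step is precisely the proof.

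For comparison, the paper's argument avoids all of this and treats every \(q\gt 2\) uniformly: decompose \(\R^2_+\) into dyadic strips \((2^k,2^{k+1})\times\R\); on the unit strip the classical two-dimensional embedding \(H^1\hookrightarrow L^q\) holds for \emph{every} finite \(q\gt 2\) (this is what makes the supercritical range accessible); the relation \(\frac{\beta-2}{q}=\frac{\alpha}{2}\) makes the resulting inequality scale-invariant, hence the constant is uniform in \(k\); summing over \(k\) uses \(\bigl(\sum_k a_k\bigr)^{q/2}\gt\sum_k a_k^{q/2}\) for \(q\gt 2\); and the leftover zeroth-order term \(\int\abs{u}^2x_1^{-\alpha-2}\) is absorbed by the weighted Hardy inequality, which is where \(\alpha\ne-1\) enters. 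If you want to salvage a dimensional-reduction proof, you would have to combine the conjugated extension with a separate argument for the supercritical weighted exponents; the dyadic-strip route is both shorter and more robust.
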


This inequality should be known but we could not find it in the litterature. 
It is a limiting case of a known family of weighted Sobolev inequalities \cite{Ma11}*{\S 2.1.7}.

\begin{proof}[Proof of lemma~\ref{lemmaSobolevAvecPoids}] 
By the classical Sobolev inequality, there exists \(C > 0\) such that for every \(u \in H^1_0 (\R^2_+, x_1^{-\alpha})\),
\begin{equation*}
\int_{(1,2) \times \R} \frac{\abs{u(x)}^q}{x_1^{\beta}} \du x  
\lt C \Bigl(\int_{(1,2) \times \R} \frac{\abs{\nabla u(x)}^2}{x_1^{\alpha}} + \frac{\abs{u(x)}^2}{x_1^{\alpha + 2}} \du x \Bigr)^\frac{q}{2}. 
\end{equation*}
Since \(\frac{\beta - 2}{q} = \frac{\alpha}{2}\), the inequality is homogeneous, so that we have for every \(k \in \mathbb{Z}\),
\begin{equation*}
\int_{(2^k,2^{k+1}) \times \R} \frac{\abs{u(x)}^q}{x_1^\beta} \du x  \lt C \Bigl(\int_{(2^{k},2^{k+1}) \times \R} \frac{\abs{\nabla u(x)}^2}{x_1^{\alpha}} + \frac{\abs{u(x)}^2}{x_1^{\alpha + 2}} \du x \Bigr)^\frac{q}{2}.
\end{equation*}
Summing over \(k\), we obtain since \(q \gt 2\),
\begin{align*}
\int_{\R^2_+} \frac{\abs{u(x)}^q}{x_1^\beta} \du x & 
\lt C \sum_{k \in \mathbb{Z}}  \Bigl(\int_{(2^{k-1},2^{k+2}) \times \R} \frac{\abs{\nabla u(x)}^2}{x_1^{\alpha}} + \frac{\abs{u(x)}^2}{x_1^{\alpha + 2}} \du x  \Bigr)^\frac{q}{2} \\
& \lt C   \Bigl(\int_{\R^2_+} \frac{\abs{\nabla u(x)}^2}{x_1^{\alpha}} + \frac{\abs{u(x)}^2}{x_1^{\alpha + 2}} \du x \Bigr)^\frac{q}{2}.
\end{align*}
We conclude using the Hardy inequality that states that for \(\alpha  \ne - 1\), 
\[
 \int_{\R^2_+} \frac{\abs{u (x)}^2}{x_1^{\alpha + 2}}\du x
\lt \Bigl(\frac{2}{\alpha + 1}\Bigr)^2 \int_{\R^2_+} \frac{\abs{\nabla u (x)}^2}{x_1^{\alpha}} \du x. \qedhere
\]
\end{proof}

The crucial tool to show that the functional \(\mathcal{E}_{\varepsilon}\) is well-defined is a weighted Sobolev inequality for truncations.

\begin{lemma}
\label{lemmaSobolevTronque}
Let \(r \gt 0\), \(\alpha > -1\) and \(\beta \in \R\).
If 
\begin{align*}
  \beta & \lt (r - 1) (\alpha +1) + 1 &
 &\text{ and } &
  \beta  & \lt \frac{r \alpha}{2} + 2,
\end{align*}
then there exists \(C>0\) such that for all \(u \in H^1_0 (\R^2_+, x_1^{-\alpha})\),
\begin{equation*}
\int_{\R^2_+} \frac{\bigl( u(x) - W x_1^{\alpha +1} \bigr)^{r}_+}{x_1^\beta} \du x 
\lt \frac{C}{W^{\frac{r \alpha - 2 (\beta - 2)}{\alpha + 2}}}
 \Bigl(\int_{\R^2_+} \frac{\abs{\nabla u (x)}^2}{x_1^\alpha} \du x \Bigr)^{\frac{r (\alpha + 1) - (\beta - 2)}{\alpha + 2}}.
\end{equation*}
Moreover, the map
\[
 H^1_0 (\R^2_+, x_1^{-\alpha}) \to \R : u \mapsto \int_{\R^2_+} \frac{\bigl( u(x) - W x_1^{\alpha +1} \bigr)^{r}_+}{x_1^\beta} \du x
\]
is continuous.
\end{lemma}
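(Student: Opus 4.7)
The plan is to reduce the inequality to the standard weighted Sobolev embedding of lemma~\ref{lemmaSobolevAvecPoids} by exploiting the support condition of the truncation. On the set where \((u - W x_1^{\alpha+1})_+ > 0\) the bound \(W x_1^{\alpha+1} \lt u\) gives, for any \(\delta \gt 0\), the elementary pointwise estimate \(x_1^\delta \lt W^{-\delta/(\alpha+1)} u^{\delta/(\alpha+1)}\). Combined with \((u - W x_1^{\alpha+1})_+ \lt u\) on the same set, this yields
\[
\frac{(u - W x_1^{\alpha+1})_+^r}{x_1^\beta} \lt \frac{1}{W^{\delta/(\alpha+1)}} \frac{u^{r + \delta/(\alpha+1)}}{x_1^{\beta + \delta}}.
\]
The parameter \(\delta\) will be chosen so that the new exponents verify the homogeneity relation \((\beta + \delta - 2)/(r + \delta/(\alpha+1)) = \alpha/2\) required by lemma~\ref{lemmaSobolevAvecPoids}.

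Solving that relation gives \(\delta = (\alpha+1)(r\alpha + 4 - 2\beta)/(\alpha+2)\) and correspondingly \(q := r + \delta/(\alpha+1) = 2\bigl(r(\alpha+1) - (\beta-2)\bigr)/(\alpha+2)\). The two hypotheses then turn out to be exactly what is needed for this choice to be legitimate: \(\beta < \tfrac{r\alpha}{2} + 2\) is equivalent to \(\delta > 0\) (the direction required for the pointwise estimate above), while \(\beta < (r-1)(\alpha+1) + 1\) is equivalent to \(q > 2\), the strict lower bound demanded by lemma~\ref{lemmaSobolevAvecPoids}. Applying that lemma with exponent \(q\) and weight \(\beta + \delta\), the claimed exponent \((r(\alpha+1) - (\beta-2))/(\alpha+2)\) on the Dirichlet energy reads off as \(q/2\), and the factor \(W^{-\delta/(\alpha+1)}\) reproduces the announced power of \(W\).

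For the continuity assertion I would invoke Vitali's convergence theorem. Given \(u_n \to u\) in \(H^1_0(\R^2_+, x_1^{-\alpha})\), extract a subsequence converging almost everywhere; the integrand then converges pointwise. Since both hypotheses are strict, the previous estimate remains valid with \(r\) replaced by a slightly larger exponent \(r + \theta\); this furnishes a uniform bound along the sequence on a higher integral of the truncated function, from which a Hölder argument yields equi-integrability in the weighted sense.

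The principal obstacle is the algebraic bookkeeping that identifies the correct ansatz for \(\delta\): once found, each of the two hypotheses of the lemma corresponds to one of the two transparent sign conditions (\(\delta > 0\) and \(q > 2\)), and the remainder of the argument is essentially mechanical.
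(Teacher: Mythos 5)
Your argument is correct and is essentially the paper's own proof: the pointwise absorption of the weight using \(W x_1^{\alpha+1} < u\) on the support of the truncation, followed by lemma~\ref{lemmaSobolevAvecPoids} with the homogeneity-determined exponent \(q = r + \delta/(\alpha+1) = 2\bigl(r(\alpha+1)-(\beta-2)\bigr)/(\alpha+2)\), is exactly the paper's argument (parametrized there by \(q > \max(2,r)\) rather than by \(\delta\)), and your identification of the two hypotheses with \(\delta>0\) and \(q>2\) matches the paper's check that \(q > \max(2,r)\). The only, immaterial, divergence is that the paper obtains the continuity from the same pointwise bound via Lebesgue's dominated convergence theorem rather than via Vitali's theorem.
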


Similar inequalities were proved by a variational argument and scaling for \(\alpha = 1\),\(\beta = - 1\) and \(r \gt 1\) \citelist{\cite{BeFr74}*{lemma IIIA}\cite{Ya95}*{lemma I.1 (1)}}. 
Similar inequalities were proved when \(\alpha = 1\)
 and \(\beta = 3\) and \(r = 0\) with an isometry with \(H^1 (\R^5)\) \cite{AmFr86}*{lemma~2.1} and when \(\alpha = \beta = 0\) with an isometry with \(H^1 (\R^4)\) \cite{Ya91}*{lemma~2.5}. (See \cite{We53} for a general explanation of those isometries.) In the latter case Smets and Van Schaftingen have given a proof of the inequality based directly on the classical Hardy and Sobolev inequalities \cite{SmVS10}*{proposition~4.2}.

\begin{proof}[Proof of lemma~\ref{lemmaSobolevTronque}]
For every \(q \gt r\) and for every \(x = (x_1, x_2) \in \R^2_+\),
\[
  \frac{\bigl(u (x) - W x_1^{\alpha + 1}\bigr)^r_+}{x_1^\beta}
 \lt \frac{\abs{u(x)}^q}{W^{q - r} x_1^{(q - r)(\alpha + 1) + \beta}}.
\]
Set now 
\[
 q = 2 \frac{r (\alpha + 1) - (\beta - 2)}{\alpha + 2}.
\]
After having observed that by our assumptions \(q \gt \max(2, r)\),
we conclude by applying lemma~\ref{lemmaSobolevAvecPoids}.
The continuity follows from the same bound and Lebesgue's dominated convergence theorem.
\end{proof}

We also want to have an inequality that relates the local behaviour with the global behaviour. 
Such results originate in the work of P.-L. Lions \cite{Li84}*{II, lemma~I.1} (see also \cite{Wi96}*{lemma~1.21}).

\begin{lemma}\label{ThReseauBoules}
If \(\alpha  > -1\) and \(r \gt 0\), 
then there exists \(C>0\) such that for all \(u \in H^1_0 (\R^2_+, x_1^{-\alpha})\) and \(W > 0\),
\begin{multline*}
\int_{\R^2_+}  \frac{\bigl( u(x) - W x_1^{\alpha +1} \bigr)^{r}_+}{x_1^{\beta}}  \du x \\
\lt \frac{C}{W^{\frac{r \alpha - 2 (\beta - 2)}{\alpha + 2}}} \biggl( \int_{\R^2_+} \frac{\abs{\nabla u (x)}^2}{x_1^\alpha} \du x + \frac{1}{W^{\frac{4}{\alpha + 2}}} \Bigl(  \int_{\R^2_+} \frac{\abs{\nabla u (x)}^2}{x_1^\alpha} \du x\Bigr)^{1 + \frac{2}{\alpha + 2}} \biggr)\\
\times \biggl( \sup_{a \in \R} \int_{ \R_+ \times (a - 1, a + 1)} \frac{\bigl( u(x) - W x_1^{\alpha +1} \bigr)^{r}_+}{x_1^{\beta}} \du x\biggr)^{1 - \frac{\alpha + 2}{r (\alpha + 1) - (\beta - 2)}}.
\end{multline*}
\end{lemma}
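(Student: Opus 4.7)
The proof adapts P.-L.~Lions' concentration-compactness argument (in the form of Willem's \cite{Wi96}*{lemma~1.21}) to this weighted truncated setting. Set $q = 2(r(\alpha+1) - (\beta-2))/(\alpha+2)$; the hypotheses on $\alpha$, $\beta$, $r$ guarantee $q > \max(r, 2)$, exactly as in the proof of lemma~\ref{lemmaSobolevTronque}. Write $v = (u - W x_1^{\alpha+1})_+$ and let $M$ denote the supremum appearing on the right-hand side of the statement. I shall rely on the pointwise bound
\[
\frac{v^r}{x_1^\beta} \lt \frac{u^q}{W^{q-r} x_1^{2 + q\alpha/2}},
\]
already established in the proof of lemma~\ref{lemmaSobolevTronque}.

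First, I cover $\R^2_+$ by the strips $T_k = \R_+ \times (k, k+1)$ for $k \in \mathbb{Z}$, each contained in a translate of $\R_+ \times (a-1, a+1)$. On every $T_k$ I use the trivial identity
\[
\int_{T_k} \frac{v^r}{x_1^\beta} = \biggl(\int_{T_k} \frac{v^r}{x_1^\beta}\biggr)^{1-2/q} \biggl(\int_{T_k} \frac{v^r}{x_1^\beta}\biggr)^{2/q} \lt M^{1-2/q} \biggl(\int_{T_k} \frac{v^r}{x_1^\beta}\biggr)^{2/q},
\]
and then bound the remaining $2/q$-th power by combining the pointwise estimate with a local version of lemma~\ref{lemmaSobolevAvecPoids}. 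The local weighted Sobolev inequality is obtained by applying lemma~\ref{lemmaSobolevAvecPoids} to $u\phi_k$, where $\phi_k(x_2)$ is a smooth cutoff equal to $1$ on $(k, k+1)$, supported in $(k-1, k+2)$, and satisfying $|\phi_k'| \lt 2$. After summing over $k$, the bounded overlap should reproduce the weighted energy $\int |\nabla u|^2/x_1^\alpha$ up to the contribution of the cutoff.

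The main obstacle is to control the cutoff error. Expanding $|\nabla(u\phi_k)|^2 \lt C(\phi_k^2 |\nabla u|^2 + u^2 (\phi_k')^2)$, after summation the second term contributes $\int u^2/x_1^\alpha$, which is \emph{not} finite in general: the Hardy inequality controls only the stronger quantity $\int u^2/x_1^{\alpha+2}$. I remedy this by introducing a second cutoff $\chi_\ell(x_1)$ in the radial direction, supported in $\{x_1 < 2\ell\}$, equal to $1$ on $\{x_1 < \ell\}$, with $|\chi_\ell'| \lt 1/\ell$. On the support of $\chi_\ell$ one has $x_1^{-\alpha} \lt 4 \ell^2 x_1^{-\alpha-2}$, so Hardy's inequality dominates the cutoff error by $C \ell^2 \int |\nabla u|^2/x_1^\alpha$. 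The contribution discarded on $\{x_1 \gt \ell\}$ is controlled using the confinement $x_1 \lt (u/W)^{1/(\alpha+1)}$ on the support of $v$ together with the global bound of lemma~\ref{lemmaSobolevTronque}.

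Balancing the cutoff error $\ell^2 \int |\nabla u|^2/x_1^\alpha$ against the contribution discarded when $\chi_\ell$ is not identically $1$ leads to the choice $\ell = \bigl(\int |\nabla u|^2/x_1^\alpha\bigr)^{1/(\alpha+2)}/W^{2/(\alpha+2)}$, which produces precisely the extra factor $W^{-4/(\alpha+2)} \bigl(\int |\nabla u|^2/x_1^\alpha\bigr)^{1+2/(\alpha+2)}$ appearing in the statement. The leading term $\int |\nabla u|^2/x_1^\alpha$ corresponds to the principal regime in which $\chi_\ell = 1$ does not intervene. Summing the local estimates over $k$ and factoring out the $W^{-(q-r)}$ from the pointwise bound then yields the claimed inequality.
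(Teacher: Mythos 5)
Your first half coincides with the paper's proof: cover \(\R^2_+\) by strips, write \(\int_{T_k} = (\int_{T_k})^{1-2/q}(\int_{T_k})^{2/q}\) to peel off the factor \(M^{1-2/q}\) with \(q = 2\frac{r(\alpha+1)-(\beta-2)}{\alpha+2}\), and localize the weighted Sobolev inequality with a cutoff in the \(x_2\)-direction. You also correctly identify the real obstacle: the cutoff error produces \(\int u^2/x_1^{\alpha}\), which Hardy's inequality does not control. The difficulty is genuine, and it is exactly the point where the second term of the statement must be generated.

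Your remedy, however, has a gap. After introducing the radial cutoff \(\chi_\ell\), the region \(\{x_1 > \ell\}\) is no longer covered by the localized estimates, and you propose to bound its contribution ``using the confinement \(x_1 \lt (u/W)^{1/(\alpha+1)}\) together with the global bound of lemma~\ref{lemmaSobolevTronque}.'' But that global bound carries no factor \(\bigl(\sup_a \int_{\R_+\times(a-1,a+1)}\cdots\bigr)^{1-2/q}\). Whatever the choice of \(\ell\), you then obtain an inequality of the form
\[
\int_{\R^2_+} \frac{(u - W x_1^{\alpha+1})_+^r}{x_1^\beta}
\lt C_1\, M^{1 - 2/q} + C_2 ,
\]
with \(C_2\) depending only on the energy; this is strictly weaker than the claimed estimate, in which \(M^{1-2/q}\) multiplies the \emph{entire} right-hand side, and it is useless for its intended application in lemma~\ref{lemmaPalaisSmaleInvariant}, where one deduces \(\liminf M > 0\) from a lower bound on the global integral. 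Keeping the \(M\)-factor on \(\{x_1>\ell\}\) would force you to localize there as well, and the \(\int u^2/x_1^\alpha\) problem reappears. The paper's fix is different and avoids the far region altogether: run the strip argument not on \(u\) but on the half-truncation \(v = (u - \tfrac{W}{2}x_1^{\alpha+1})_+\), for which \((v - \tfrac{W}{2}x_1^{\alpha+1})_+ = (u - Wx_1^{\alpha+1})_+\), \(\int |\nabla v|^2/x_1^\alpha \lt C \int |\nabla u|^2/x_1^\alpha\), and — crucially — \(\int v^2/x_1^\alpha\) \emph{is} finite, being controlled by lemma~\ref{lemmaSobolevTronque} with \(r = 2\) and \(\beta = \alpha\); that single application is the exact source of the term \(W^{-4/(\alpha+2)}\bigl(\int |\nabla u|^2/x_1^\alpha\bigr)^{1+2/(\alpha+2)}\) which your optimization over \(\ell\) reproduces only formally. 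If you replace your radial cutoff by this pre-truncation, the rest of your argument goes through.
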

\begin{proof} 
Chosse \(\eta \in C^\infty (\R)\) such that \(\eta = 1\) on \([-1, 1]\) and \(\supp \eta \subset [-2, 2]\). 
For every \(a \in \R\) and \(x = (x_1, x_2) \in \R_+^2\), 
define \(\theta_a (x) = \eta (x_2 - a)\).
For every \(v \in H^1_0 (\R^2_+, x_1^{-\alpha})\), we have by lemma~\ref{lemmaSobolevTronque},
\begin{multline*}
  \int_{\R_+ \times (a - 1, a + 1)}
  \frac{\bigl(v (x) - \frac{W}{2} x_1^{\alpha + 1}\bigr)^r_+}{x_1^\beta} \du x\\
\shoveleft{ \lt 
  \int_{\R_+^2}
  \frac{\bigl(\theta_a (x) v (x) - \frac{W}{2} x_1^{\alpha + 1}\bigr)^r_+}{x_1^\beta} \du x}\\
 \lt 
\frac{C}{W^{\frac{r \alpha - 2 (\beta - 2)}{\alpha + 2}}}
 \Bigl(\int_{\R_+ \times (a - 2, a + 2)} \frac{\abs{\nabla v (x)}^2}{x_1^\alpha} + \frac{\abs{v (x)}^2}{x_1^\alpha} \du x \Bigr)^{\frac{r (\alpha + 1) - (\beta - 2)}{\alpha + 2}}.
\end{multline*}
This implies that 
\begin{multline*}
\int_{\R_+ \times (a - 1, a + 1)}
  \frac{\bigl(v (x) - \frac{W}{2} x_1^{\alpha + 1}\bigr)^r_+}{x_1^\beta} \du x\\
\lt C 
\frac{C}{W^{2 - \frac{\alpha + 2}{r (\alpha + 1) - (\beta - 2)} }}
 \Bigl(\int_{\R_+ \times (a - 2, a + 2)} \frac{\abs{\nabla v (x)}^2}{x_1^\alpha} + \frac{\abs{v (x)}^2}{x_1^\alpha} \du x \Bigr)\\
\times \Bigl(\int_{\R_+ \times (a - 1, a + 1)}
  \frac{\bigl(v (x) - \frac{W}{2} x_1^{\alpha + 1}\bigr)^r_+}{x_1^\beta} \du x
\Bigr)^{1 - \frac{\alpha + 2}{r (\alpha + 1) - (\beta - 2)}}.
\end{multline*}

For \(u \in H^1_0 (\R^2_+, x_1^{-\alpha})\), set now
\[
 v (x) = \bigl(u (x) - \tfrac{W}{2}x_1^{\alpha + 1}\bigr)_+.
\]
We apply the previous inequality, noting that by lemma~\ref{lemmaSobolevTronque} 
\[
\begin{split}
  \int_{\R^2_+} \frac{\abs{\nabla v (x)}^2}{x_1^\alpha}\du x
  & \lt 2 \int_{\R^2_+} \frac{\abs{\nabla u (x)}^2}{x_1^\alpha}\du x
+ 2 \int_{\R^2_+} x_1^\alpha (u (x) - W x_1^{\alpha +1})^{p+1}\du x\\
 & \lt C \int_{\R^2_+} \frac{\abs{\nabla u (x)}^2}{x_1^\alpha}\du x
\end{split}
\]
and 
\[
 \int_{\R^2_+} \frac{\abs{v (x)}^2}{x_1^\alpha}\du x
\lt \frac{C}{W^\frac{4}{\alpha + 2}} \Bigl(\int_{\R^2_+} \frac{\abs{\nabla u (x)}^2}{x_1^\alpha}\du x \Bigr)^{1+ \frac{2}{\alpha + 2}}.\qedhere
\]
\end{proof}

As a consequence of the previous lemmas, we have

\begin{lemma}
\label{lemmaEeWell}
Let \(\Omega \subset \R^2_+\), \(\alpha \gt 0\), \(b (x) = \frac{1}{x_1^\alpha}\) and \(q : \R^2 \to [0, \infty)\) be measurable.
If 
\[
  \inf_{x \in \Omega} \frac{q (x)}{x_1^{\alpha + 1}} > 0,
\]
then for every \(\varepsilon \in (0, 1)\), the functional \(\mathcal{E}_{\varepsilon}\) is well-defined and continuously Fr\'echet-differentiable.
Moreover
\[
 \lim_{u \to 0} \frac{\mathcal{E}_{\varepsilon} (u)}{\displaystyle \int_{\Omega} \frac{\abs{\nabla u}^2}{b}} > 0.
\]
and there exists a constant \(c > 0\) depending only on \(p\), \(\alpha\), \(\inf_{x \in \Omega} \frac{q (x)}{x_1^{\alpha + 1}}\) and \(\varepsilon\)  such that for every \(u \in H^1_0 (\R^2_+, b)\),
\[
 \max_{t > 0} \mathcal{E}_{\varepsilon} (t u) \gt c.
\]
\end{lemma}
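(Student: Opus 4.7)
The plan is to reduce every quantity that needs to be controlled to a single master estimate obtained from lemma~\ref{lemmaSobolevTronque}. Set \(c_0 := \inf_\Omega q/x_1^{\alpha+1} > 0\) and \(W := c_0 \log(1/\varepsilon) > 0\). Since \(q_\varepsilon(x) \gt W x_1^{\alpha+1}\) on \(\Omega\), the pointwise inequality \((u - q_\varepsilon)_+ \lt (u - W x_1^{\alpha+1})_+\) together with extension by zero on \(\R^2_+ \setminus \Omega\) gives, for every \(u \in H^1_0(\Omega, b)\),
\[
  \int_\Omega b (u - q_\varepsilon)^{p+1}_+ \lt \int_{\R^2_+} \frac{(u - W x_1^{\alpha+1})^{p+1}_+}{x_1^\alpha} \du x.
\]
The hypotheses of lemma~\ref{lemmaSobolevTronque} with \(r = p + 1\) and \(\beta = \alpha\) reduce to \((p-1)\alpha + p + 1 > 0\) and \((p-1)\alpha + 4 > 0\), both of which hold since \(p > 1\) and \(\alpha \gt 0\). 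The lemma therefore provides the master bound
\[
  \int_\Omega b(u - q_\varepsilon)^{p+1}_+ \lt \frac{C}{W^\gamma} \Bigl(\int_\Omega \frac{\abs{\nabla u}^2}{b}\Bigr)^{\theta}, \quad \gamma = \tfrac{(p-1)\alpha+4}{\alpha+2}, \ \theta = \tfrac{p(\alpha+1)+3}{\alpha+2},
\]
with the key exponent property \(\theta - 1 = ((p-1)\alpha + p + 1)/(\alpha+2) > 0\).

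From this bound the first two conclusions follow immediately: \(\mathcal{E}_\varepsilon\) takes real values on \(H^1_0(\Omega, b)\); the continuity clause of lemma~\ref{lemmaSobolevTronque} makes it continuous; and \(\mathcal{E}_\varepsilon(u)/\norm{u}_{H^1_0(\Omega,b)}^2 \gt \tfrac{1}{2} - C'\norm{u}_{H^1_0(\Omega,b)}^{2(\theta-1)} \to \tfrac{1}{2}\) as \(u \to 0\), which gives the strictly positive limit. Continuous Fréchet differentiability is obtained by a standard argument: the Gâteaux derivative
\[
  \dualprod{\mathcal{E}_\varepsilon'(u)}{v} = \int_\Omega \frac{\nabla u \cdot \nabla v}{b} - \frac{1}{\varepsilon^2}\int_\Omega b(u - q_\varepsilon)^p_+ v
\]
is seen to be a continuous linear form on \(H^1_0(\Omega, b)\) by a Hölder splitting combined with lemma~\ref{lemmaSobolevTronque}, while continuity in \(u\) in the operator norm follows from dominated convergence and the continuity clause of that same lemma.

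For the last conclusion, the essential observation is the exact homogeneity \((tu - q_\varepsilon)_+ = t(u - q_\varepsilon/t)_+\) valid for \(t > 0\). Since \(q_\varepsilon/t \gt (W/t) x_1^{\alpha+1}\) on \(\Omega\), applying lemma~\ref{lemmaSobolevTronque} with \(W\) replaced by \(W/t\) yields
\[
  \frac{1}{(p+1)\varepsilon^2} \int_\Omega b(tu - q_\varepsilon)^{p+1}_+ \lt \frac{C''\, t^{p+1+\gamma}}{\varepsilon^2 W^\gamma} \norm{u}_{H^1_0(\Omega,b)}^{2\theta}.
\]
The pivotal algebraic identity \(p+1+\gamma = 2\theta\) then makes the right-hand side depend on \(t\) and \(u\) only through the product \(s := t \norm{u}_{H^1_0(\Omega,b)}\), so that
\[
  \mathcal{E}_\varepsilon(tu) \gt \tfrac{1}{2}s^2 - \tfrac{C''}{\varepsilon^2 W^\gamma}\, s^{2\theta}.
\]
Maximising over \(s > 0\) produces a strictly positive constant depending only on \(p\), \(\alpha\), \(c_0\) and \(\varepsilon\), which is the desired uniform lower bound on \(\max_{t > 0} \mathcal{E}_\varepsilon(tu)\).

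The main obstacle is precisely the exponent bookkeeping underlying this argument: one must verify both \(\theta > 1\), so that the nonlinear term is genuinely superquadratic at the origin, and the coincidence \(p + 1 + \gamma = 2\theta\), which is what makes the upper estimate depend on \(t\) and \(u\) only through \(s = t\norm{u}\) and thus produces a lower bound on \(\max_t \mathcal{E}_\varepsilon(tu)\) that is \emph{uniform in} \(u\). Once these two identities are in hand, the rest is a routine application of the weighted Sobolev machinery of the preceding two lemmas.
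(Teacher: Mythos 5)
Your argument is correct and follows essentially the same route as the paper's (two-line) proof: lemma~\ref{lemmaSobolevTronque} gives the well-definedness, the \(C^1\) regularity and the behaviour at the origin, and the uniform lower bound on \(\max_{t>0}\mathcal{E}_{\varepsilon}(tu)\) comes from maximising \(\tfrac{1}{2}s^2 - C s^{2\theta}\) in the single variable \(s = t\norm{u}_{H^1_0(\Omega,b)}\). One bookkeeping caveat: you invoke lemma~\ref{lemmaSobolevTronque} with \(\beta = \alpha\), which matches the literal reading \(b = 1/x_1^{\alpha}\) of the statement but is then inconsistent with treating \(\int_\Omega \abs{\nabla u}^2/b\) as the quantity \(\int \abs{\nabla u}^2/x_1^{\alpha}\) appearing in that lemma; everywhere else in this section the weight is \(b(x) = x_1^{\alpha}\) (the \(1/x_1^{\alpha}\) in the statement is evidently a typo), and the consistent choice is \(\beta = -\alpha\), which yields \(\theta = 1 + (p+1)\tfrac{\alpha+1}{\alpha+2}\) and \(\gamma = \tfrac{(p+3)\alpha+4}{\alpha+2}\), as in the paper's displayed estimate. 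This affects nothing structural: \(\theta > 1\) still holds, and the identity \(p+1+\gamma = 2\theta\) that you flag as the pivotal coincidence is in fact automatic for any admissible \(\beta\), being forced by the invariance of lemma~\ref{lemmaSobolevTronque} under the joint rescaling \((u, W) \mapsto (\lambda u, \lambda W)\), so it requires no separate verification.
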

\begin{proof}
The well-definitess, the smoothness and the asymptotic behaviour around \(0\) follow from lemma~\ref{lemmaSobolevTronque}.
By the same lemma, we have
\[
\begin{split}
  \mathcal{E}_{\varepsilon} (t v)
&\gt \frac{t^2}{2} \int_{\R^2_+} \frac{\abs{\nabla v}^2}{b} - \frac{1}{(p + 1) \varepsilon^2}
\int_{\R^2_+} b (t v - q_\varepsilon)^{p + 1}_+\\
&\gt \frac{t^2}{2} \int_{\R^2_+} \frac{\abs{\nabla v}^2}{b} - C   \Bigl( \int_{\R^2_+} t^{2} \frac{\abs{\nabla v}^2}{b}\Bigr)^{1 + (p + 1) \frac{\alpha + 1}{\alpha + 2}};
\end{split}
\]
by maximizing the right-hand side over \(t > 0\), we reach the conclusion.
\end{proof}

A more precise analysis shows that the conclusion of lemma~\ref{lemmaEeWell} still holds for \(\alpha \in (0, 1)\) under some additional restriction on \(p\).

\subsubsection{The translation-invariant case}
We now show that problem \eqref{problemP} has at least a nontrivial solution when for a translation invariant problem.
We say that a set \(\Omega \subset \R^2\) is translation-invariant, if for every \((x_1, x_2) \in \Omega\), \((x_1, x_2 + s) \in \Omega\) and that a function \(g : \Omega \to \R\) is translation-invariant if for every \((x_1,x_2) \in \Omega\) and \(s \in \R\),
\begin{equation*}
  g (x_1,x_2) = g(x_1,x_2+s).
\end{equation*}

\begin{proposition}
\label{propositionExistenceInvariant}
Let \(\alpha \gt 0\) and let \(\varepsilon \in (0, 1)\). If \(\Omega \subset \R_+^2\) is open and translation-invariant, if for every \(x \in \Omega\), \(b (x) = x_1^\alpha\), if \(q : \Omega \to \R\) is  measurable and translation-invariant and if 
\[
 \inf_{x \in \Omega} \frac{q (x)}{x_1^{\alpha + 1}} > 0,  
\]
then there exists a solution \(u_{\varepsilon} \in H^1_0 (\Omega, b)\) of problem \eqref{problemP} such that \(\mathcal{E}_{\varepsilon} (u_{\varepsilon}) = c_\varepsilon\).
\end{proposition}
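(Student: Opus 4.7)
The plan is to carry out a concentration-compactness argument on a Palais--Smale sequence at the mountain-pass level: extract boundedness, rule out vanishing by translating along the invariant direction, and identify a nontrivial weak limit that attains the critical level. By lemma~\ref{lemmaEeWell} the functional $\mathcal{E}_{\varepsilon}$ is well-defined and $C^1$ on $H^1_0(\Omega, b)$, behaves quadratically near the origin and admits a positive mountain-pass value; the hypotheses of lemma~\ref{lemmaCritical} are therefore satisfied and produce a Palais--Smale sequence $(u^n)_{n \in \N}$ at the level $c_\varepsilon > 0$. Lemma~\ref{lemmeGradientEnergie}, applied to $u^n$ and combined with $\dualprod{\mathcal{E}_{\varepsilon}'(u^n)}{u^n} = o(\norm{u^n}_{H^1_0(\Omega, b)})$, yields a uniform bound on $\norm{u^n}_{H^1_0(\Omega, b)}$; consequently the nonlinear term $\int_\Omega b (u^n - q_\varepsilon)_+^{p+1}$ is bounded, while $c_\varepsilon > 0$ forces it to be bounded away from zero.

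Next I rule out vanishing. By hypothesis the constant $c_0 := \inf_\Omega q(x)/x_1^{\alpha+1}$ is positive, so for $W := c_0 \log \tfrac{1}{\varepsilon}$ one has $q_\varepsilon(x) \gt W x_1^{\alpha+1}$ and hence $(u^n - q_\varepsilon)_+ \lt (u^n - W x_1^{\alpha+1})_+$ pointwise. Applying lemma~\ref{ThReseauBoules} with $r = p+1$ and the weight exponent corresponding to $b$, together with the uniform upper bound on $\norm{u^n}_{H^1_0(\Omega, b)}$ and the uniform lower bound on the nonlinear integral, I obtain $\delta > 0$ and $(a_n)_{n \in \N} \subset \R$ such that
\[
   \int_{\R_+ \times (a_n - 1,\, a_n + 1)} b\,(u^n - q_\varepsilon)_+^{p+1}\du x \gt \delta.
\]
Because $\Omega$, $b$ and $q$ are translation-invariant in $x_2$, the shifted functions $\tilde u^n(x_1, x_2) := u^n(x_1, x_2 + a_n)$ still form a bounded Palais--Smale sequence at level $c_\varepsilon$. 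A subsequence satisfies $\tilde u^n \weakto u_\varepsilon$ in $H^1_0(\Omega, b)$; the Rellich--Kondrachov theorem applied on each bounded slice $\Omega \cap (\R_+ \times (-R, R))$ together with the continuity statement in lemma~\ref{lemmaSobolevTronque} upgrades this to strong convergence of $(\tilde u^n - q_\varepsilon)_+^{p+1}$ in $L^1(b\du x)$ on every such slice, so the uniform mass bound on $\R_+ \times (-1,1)$ passes to the limit and forces $u_\varepsilon \ne 0$.

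It remains to check that $u_\varepsilon$ is a least-energy solution. Testing $\mathcal{E}_{\varepsilon}'(\tilde u^n) \goesto 0$ against an arbitrary $\varphi \in C^\infty_c(\Omega)$ and using the local strong convergence shows that $u_\varepsilon$ is a weak solution of~\eqref{problemP}, and by density of test functions in $H^1_0(\Omega, b)$ we get $u_\varepsilon \in \nehari$, hence $\mathcal{E}_{\varepsilon}(u_\varepsilon) \gt c_\varepsilon$. For the reverse inequality I exploit the representation in lemma~\ref{lemmeGradientEnergie}: weak lower semicontinuity of $u \mapsto \int |\nabla u|^2/b$ and Fatou's lemma on the pointwise-nonnegative integrand arising in that proof give
\[
   \mathcal{E}_{\varepsilon}(u_\varepsilon) = \mathcal{E}_{\varepsilon}(u_\varepsilon) - \tfrac{1}{p+1}\dualprod{\mathcal{E}_{\varepsilon}'(u_\varepsilon)}{u_\varepsilon} \lt \liminf_{n \to \infty} \Bigl(\mathcal{E}_{\varepsilon}(\tilde u^n) - \tfrac{1}{p+1}\dualprod{\mathcal{E}_{\varepsilon}'(\tilde u^n)}{\tilde u^n}\Bigr) = c_\varepsilon.
\]
The hard part is the non-vanishing step: it is critical that the unboundedness of $\Omega$ occurs only in the invariant $x_2$-direction, so that a single scalar translation suffices to restore compactness, and that the hypothesis $\inf_\Omega q/x_1^{\alpha+1} > 0$ lets us dominate the truncation $(u^n - q_\varepsilon)_+$ by the homogeneous one $(u^n - W x_1^{\alpha+1})_+$ for which the Lions-type lemma~\ref{ThReseauBoules} is available.
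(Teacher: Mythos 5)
Your overall strategy is the one the paper actually follows (there the compactness argument is packaged into lemma~\ref{lemmaPalaisSmaleInvariant}, which you essentially inline), and the skeleton --- boundedness via lemma~\ref{lemmeGradientEnergie}, non-vanishing via lemma~\ref{ThReseauBoules}, translation in \(x_2\), Rellich on strips, identification of the weak limit as a Nehari critical point --- is right. Two of your steps, however, do not work as written. First, the non-vanishing step: you assert that \(c_\varepsilon>0\) ``forces'' \(\int_\Omega b\,(u^n-q_\varepsilon)_+^{p+1}\) to be bounded away from zero, but this does not follow from \(\mathcal{E}_{\varepsilon}(u^n)\to c_\varepsilon\) alone, since that limit is a priori compatible with the nonlinear term tending to \(0\) while the gradient term carries all the energy. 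What one actually extracts, from \(\dualprod{\mathcal{E}_{\varepsilon}'(u^n)}{u^n}\to 0\) and boundedness, is
\[
\frac{1}{\varepsilon^2}\int_\Omega b\,(u^n-q_\varepsilon)_+^{p}\,u^n=\int_\Omega\frac{\abs{\nabla u^n}^2}{b}+o(1)\gt 2\,\mathcal{E}_{\varepsilon}(u^n)+o(1)\to 2c_\varepsilon>0,
\]
and one must then dominate \((u^n-q_\varepsilon)_+^p\,u^n\) by \(2\bigl(u^n-\tfrac{W}{2}\log\tfrac{1}{\varepsilon}\,x_1^{\alpha+1}\bigr)_+^{p+1}\) (the \emph{halved} truncation, using \(u^n\lt 2(u^n-\tfrac{q_\varepsilon}{2})\) on the set where the integrand is nonzero) before lemma~\ref{ThReseauBoules} can be applied. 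Note also that the lemma then yields non-vanishing of the strip integrals of the \emph{homogeneous} truncation, not of \((u^n-q_\varepsilon)_+^{p+1}\) as you state; this is still enough to conclude \(u_\varepsilon\ne0\) after translating and applying Rellich.

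Second, and more seriously, your closing inequality rests on a sign error. The integrand produced by the combination \(\mathcal{E}_{\varepsilon}-\tfrac{1}{p+1}\dualprod{\mathcal{E}_{\varepsilon}'(\cdot)}{\cdot}\) in the proof of lemma~\ref{lemmeGradientEnergie} is \((u-q_\varepsilon)_+^{p+1}-(u-q_\varepsilon)_+^p u=-(u-q_\varepsilon)_+^p q_\varepsilon\lt 0\), i.e.\ non\emph{positive}; Fatou's lemma applied to the nonnegative function \(b\,(u^n-q_\varepsilon)_+^p q_\varepsilon\) gives a \(\liminf\) inequality in the wrong direction once the minus sign is reinstated, and the weak lower semicontinuity of the gradient term cannot compensate for a term entering with a negative sign. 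The repair is to use the multiplier \(\tfrac12\) instead of \(\tfrac{1}{p+1}\): since \(u_\varepsilon\in\nehari\) and \(\dualprod{\mathcal{E}_{\varepsilon}'(u^n)}{u^n}\to0\),
\[
c_\varepsilon=\lim_{n\to\infty}\frac{1}{\varepsilon^2}\int_\Omega b\Bigl(\tfrac12(u^n-q_\varepsilon)_+^p\,u^n-\tfrac{1}{p+1}(u^n-q_\varepsilon)_+^{p+1}\Bigr)\gt \frac{1}{\varepsilon^2}\int_\Omega b\Bigl(\tfrac12(u_\varepsilon-q_\varepsilon)_+^p\,u_\varepsilon-\tfrac{1}{p+1}(u_\varepsilon-q_\varepsilon)_+^{p+1}\Bigr)=\mathcal{E}_{\varepsilon}(u_\varepsilon),
\]
where Fatou now applies legitimately because the integrand \((u-q_\varepsilon)_+^p\bigl(\tfrac{u}{2}-\tfrac{u-q_\varepsilon}{p+1}\bigr)\) is pointwise nonnegative (as \(p>1\) and \(q_\varepsilon\gt0\)) and the gradient term has been eliminated entirely. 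This is exactly how the paper closes the argument at the end of the proof of lemma~\ref{lemmaPalaisSmaleInvariant}.
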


When \(\Omega = \R_+^2\), the result is due to Ambrosetti and Yang for \(\alpha = 1\) \citelist{\cite{Ya91}*{theorem 1}\cite{AmYa90}*{theorem 1}} and to Yang for \(\alpha = 0\) \cite{Ya95}*{theorem 1}.

\begin{lemma}
\label{lemmaPalaisSmaleInvariant}
Let \(\alpha \gt 0\) and \(\varepsilon \in (0, 1)\).
If \(\Omega \subset \R^2_+\) is open and translation-invariant, if for every \(x \in \Omega\), \(b (x) = x_1^\alpha\), if \(q : \Omega \to \R\) and \(q^n : \Omega \to \R\) are measurable and translation-invariant and
if
\begin{align}
& \text{for every \(x \in \Omega\)} &
\tag{a} \label{eqLimqn} \lim_{n \to \infty} q^n (x) &= q (x),\\
\tag{b} & & \label{eqInfqn} \inf_{n \in \N} \inf_{x \in \Omega} \frac{q^n (x)}{x_1^{\alpha + 1}} &> 0,\\
\tag{c} & & \label{eqliminfEe} \liminf_{n \to \infty} \mathcal{E}_{\varepsilon}^n(u^n) & > 0,\\
\tag{d} & & \label{eqlimsupEe} \limsup_{n \to \infty} \mathcal{E}_{\varepsilon}^n(u^n) & < \infty,\\
\tag{e}& & \label{eqConvergencedEe}\mathcal{E}_{\varepsilon}^n{}'(u^n) & \goesto 0 \ & & \text{ in } \bigl(H^1_0 (\Omega, b)\bigr)' \text{ as } n \goesto \infty,
\end{align}
where \(\mathcal{E}_{\varepsilon}^n\) denotes the functional associated to \(q^n\),
then there exists \(u \in H^1_0 (\R^2, b)\) such that \(\mathcal{E}_{\varepsilon}'(u) = 0\) and 
\[
  \mathcal{E}_{\varepsilon} (u) \lt \liminf_{n \to \infty} \mathcal{E}_{\varepsilon}^n (u^n).
\]
\end{lemma}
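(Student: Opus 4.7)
The plan is to run the concentration-compactness principle of P.-L.\ Lions adapted to this translation-invariant setting. Because \(\Omega\), \(b\) and each \(q^n\) are invariant under the shifts \((x_1, x_2) \mapsto (x_1, x_2 + s)\), the only obstruction to strong compactness can come from mass escaping to \(\pm \infty\) in the second coordinate, which I will counter by suitably translating the sequence \((u^n)\). A uniform bound on \(\|u^n\|_{H^1_0 (\Omega, b)}\) is immediate: since \eqref{eqInfqn} implies \(q^n \gt 0\), lemma~\ref{lemmeGradientEnergie} applied to \(\mathcal{E}_\varepsilon^n\) yields
\[
   \Bigl(\tfrac{1}{2} - \tfrac{1}{p+1}\Bigr) \int_{\Omega} \frac{\abs{\nabla u^n}^2}{b} \lt \mathcal{E}_{\varepsilon}^n(u^n) - \frac{1}{p + 1} \dualprod{(\mathcal{E}_{\varepsilon}^n)'(u^n)}{u^n},
\]
and the combination of \eqref{eqlimsupEe} with \eqref{eqConvergencedEe} produces boundedness.

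Next I would rule out vanishing. Choose \(W_0 > 0\) so small that \(2 W_0 x_1^{\alpha + 1} \lt q^n_\varepsilon (x)\) for every \(n\) and every \(x \in \Omega\), which is possible by \eqref{eqInfqn} together with \(\varepsilon \in (0, 1)\). If
\[
  \lim_{n \goesto \infty} \sup_{a \in \R} \int_{\R_+ \times (a - 1, a + 1)} \frac{\bigl(u^n - W_0 x_1^{\alpha + 1}\bigr)_+^{p + 1}}{x_1^{-\alpha}} \du x = 0,
\]
then lemma~\ref{ThReseauBoules} together with the previous uniform bound forces \(\int_{\Omega} b\,\bigl(u^n - q^n_\varepsilon/2\bigr)_+^{p+1} \goesto 0\); the pointwise estimate used in the proof of lemma~\ref{lemmaCritical} then gives \(\int_{\Omega} b\,\bigl(u^n - q^n_\varepsilon\bigr)_+^{p} u^n \goesto 0\). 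Testing \eqref{eqConvergencedEe} against \(u^n\) produces \(\int_{\Omega} \abs{\nabla u^n}^2 / b \goesto 0\), hence \(\mathcal{E}_\varepsilon^n(u^n) \goesto 0\), in contradiction with \eqref{eqliminfEe}. Therefore there exist \(a_n \in \R\) and \(\delta > 0\) such that the local integral over \(\R_+ \times (a_n - 1, a_n + 1)\) remains above \(\delta\).

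Setting \(\tilde u^n (x_1, x_2) := u^n (x_1, x_2 + a_n)\), translation invariance transfers hypotheses \eqref{eqliminfEe}--\eqref{eqConvergencedEe} to \((\tilde u^n)\) with the same weights \(q^n\), and after extraction \(\tilde u^n \weakto u\) in \(H^1_0 (\Omega, b)\). Local Rellich compactness on the slab \(\R_+ \times (-1, 1)\) combined with the continuity statement in lemma~\ref{lemmaSobolevTronque} lets the local integral pass to the limit, so \(u \ne 0\). To check that \(u\) is a critical point of the limit functional, I would test \((\mathcal{E}_\varepsilon^n)'(\tilde u^n) \goesto 0\) against \(\varphi \in C^{\infty}_c (\Omega)\): the linear term passes to the limit by weak convergence and the nonlinear term by Lebesgue's dominated convergence using \eqref{eqLimqn} and lemma~\ref{lemmaSobolevTronque}, giving \(\mathcal{E}_\varepsilon'(u) = 0\). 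For the energy bound, I would apply lemma~\ref{lemmeGradientEnergie} at \(\tilde u^n\) to write
\[
  \mathcal{E}_\varepsilon^n (\tilde u^n) - \tfrac{1}{p+1}\dualprod{(\mathcal{E}_\varepsilon^n)'(\tilde u^n)}{\tilde u^n} = \Bigl(\tfrac{1}{2} - \tfrac{1}{p+1}\Bigr) \int_{\Omega}\frac{\abs{\nabla \tilde u^n}^2}{b} + \frac{1}{(p+1)\varepsilon^2}\int_{\Omega} b\, q^n_\varepsilon\, (\tilde u^n - q^n_\varepsilon)_+^{p},
\]
and then combine weak lower semicontinuity of the Dirichlet integral with Fatou's lemma on the nonnegative nonlinear integrand, invoking \eqref{eqLimqn}; the analogous identity for \(u\), using \(\mathcal{E}_\varepsilon'(u) = 0\), then yields \(\mathcal{E}_\varepsilon (u) \lt \liminf_{n \goesto \infty} \mathcal{E}_\varepsilon^n (u^n)\). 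I expect the main difficulty to be the non-vanishing step, where one must verify that the exponents \(r = p + 1\), \(\beta = -\alpha\) fit the hypotheses of lemma~\ref{ThReseauBoules} and that the truncation at level \(W_0 x_1^{\alpha + 1}\) can indeed be promoted to one at \(q^n_\varepsilon\) via the pointwise estimate from lemma~\ref{lemmaCritical}.
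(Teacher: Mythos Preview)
Your proposal is correct and follows essentially the same route as the paper: bound the sequence via lemma~\ref{lemmeGradientEnergie}, rule out vanishing through lemma~\ref{ThReseauBoules} and the pointwise inequality \((u - q)_+^p u \lt (u - q/2)_+^{p+1}\), translate by \(a_n\) to recover a nontrivial weak limit, and pass to the limit in the equation using local Rellich compactness together with \eqref{eqLimqn}. The only cosmetic difference is in the final energy estimate: the paper subtracts \(\tfrac{1}{2}\dualprod{(\mathcal{E}_\varepsilon^n)'(\tilde u^n)}{\tilde u^n}\) so that the Dirichlet integral drops out and Fatou alone suffices on the nonnegative integrand \(\tfrac{1}{2}(v - q_\varepsilon)_+^p v - \tfrac{1}{p+1}(v - q_\varepsilon)_+^{p+1}\), whereas you subtract \(\tfrac{1}{p+1}\dualprod{\cdot}{\cdot}\) and therefore also need weak lower semicontinuity of the Dirichlet term; both work.
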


In the proof of lemma~\ref{lemmaPalaisSmaleInvariant}, we follow the strategy of Rabinowitz \cite{Ra92}*{theorem 3.21}.

\begin{proof} 
By our assumption \eqref{eqConvergencedEe} and by lemma~\ref{lemmeGradientEnergie}, we have as \(n \to \infty\),
\[
 \Bigl(\frac{1}{2} - \frac{1}{p + 1} \Bigr) \int_{\Omega} \frac{\abs{\nabla u^n}^2}{b} 
\lt \mathcal{E}_{\varepsilon}^n (u^n) - \frac{1}{p + 1} \dualprod{\mathcal{E}_{\varepsilon}^n{}' (u^n)}{u^n} =  \mathcal{E}_{\varepsilon}^n (u^n)  + o (1) \Bigl(\int_{\Omega} \frac{\abs{\nabla u^n}^2}{b} \Bigr)^\frac{1}{2}.
\]
By the assumption \eqref{eqlimsupEe}, the sequence \((u^n)_{n \in \N}\) is thus bounded in \(H^1_0 (\Omega, b)\). 
Applying again \eqref{eqConvergencedEe}, we have, as \(n \to \infty\),
\begin{equation*}
\int_{\Omega} \frac{\abs{\nabla u^n}^2}{b} = \frac{1}{\varepsilon^2} \int_{\Omega} b (u^n - q_\varepsilon)^p_+ u^n + o(1). 
\end{equation*}
By \eqref{eqInfqn}, we have \(W = \inf_{n \in \N} \inf_{x \in \Omega} \frac{q^n (x)}{x_1^{\alpha + 1}} > 0\). 
Setting for \(x \in \Omega\), \(\underline{q}_\varepsilon (x) = (\log \tfrac{1}{\varepsilon}) \frac{W}{2} x_1^{\alpha +1}\), we have since \(p > 1\),
\[
\begin{split}
 \frac{1}{\varepsilon^2} \int_{\Omega} b (u^n - q_\varepsilon^n)^p_+ u^n
&\lt  \frac{1}{\varepsilon^2} \int_{\Omega} b (u^n - 2 \underline{q}_\varepsilon)^p_+ u^n\\
 &= \frac{1}{\varepsilon^2} \int_{\Omega} b \bigl(u^n - 2 \underline{q}_\varepsilon \bigr)^{p-1}_+ \bigl( (u^n - \underline{q}_\varepsilon)^2 - \underline{q}_\varepsilon^2 \bigr)  \\ 
  & \lt \frac{1}{\varepsilon^2} \int_{\Omega} b \bigl(u^n - \underline{q}_\varepsilon\bigr)^{p + 1}_+.
\end{split}
\]
On the other hand, by lemma~\ref{lemmaSobolevTronque}, there exists \(C > 0\) such that
\[
  \int_{\Omega} b \bigl(u^n - \underline{q}_\varepsilon\bigr)^{p + 1}_+ \lt C \Bigl(\int_{\Omega} \frac{\abs{\nabla u^n}^2}{b} \Bigr)^{1 + (p + 1)\frac{\alpha + 1}{\alpha + 2}}.
\]
Hence, since \(1 + (p + 1)\frac{\alpha + 1}{\alpha + 2} > 1\) and \eqref{eqliminfEe} holds, we deduce by lemma~\ref{lemmaEeWell} that 
\begin{equation*}
\liminf_{n \goesto \infty} \int_{\Omega} b \bigl(u^n - \underline{q}_\varepsilon\bigr)^{p + 1}_+ >0.
\end{equation*}
Since the sequence \((u^n)_{n \in \N}\) is bounded in \(H^1_0 (\Omega, b)\), this implies by lemma~\ref{ThReseauBoules} that
\[
 \liminf_{n \goesto \infty} \sup_{a \in \R} \int_{\Omega \cap (\R \times (a - 1, a + 1))} b \bigl(u^n - \underline{q}_\varepsilon\bigr)^{p + 1}_+ >0;
\]
hence there exists a sequence \((a^n)_{n \in \N}\) in \(\R\) such that 
\[
 \liminf_{n \goesto \infty} \int_{\Omega \cap (\R \times (a^n - 1, a^n + 1))} b \bigl(u^n - \underline{q}_\varepsilon\bigr)^{p + 1}_+ > 0.
\]
Define now for \(n \in \N\) and \(x = (x_1, x_2) \in \Omega\), \(v^n (x) = u^n (x_1, a^n + x_2)\). It is clear that \(v^n \in H^1_0 (\Omega, b)\),
\begin{equation*}
\mathcal{E}_{\varepsilon}^n(v^n) \goesto c_\varepsilon^\infty \quad \text{ and } \quad
\mathcal{E}_{\varepsilon}^n{}'(v^n) \goesto 0 \ \text{ in } \bigl(H^1_0 (\Omega, b)\bigr)' \text{ as } n \goesto \infty. 
\end{equation*}

Since the sequence \((v^n)_{n \in \N}\) is bounded in \(H^1_0 (\Omega, b)\), up to a subsequence, one can thus assume that \(v^n \weakto u\) weakly in \(H^1_0 (\Omega, b)\). 
By Rellich's compactness theorem, since \(\alpha \ge 0\),
\[
 \int_{\Omega \cap (\R  \times ( - 1, 1))} b \bigl(u - \underline{q}_\varepsilon\bigr)^{p + 1}_+
= \liminf_{n \goesto \infty} \int_{\Omega \cap (\R  \times (- 1, 1))} b \bigl(v^n - \underline{q}_\varepsilon\bigr)^{p + 1}_+ > 0,
\]
so that \(u \ne 0\).
By the weak convergence in \(H^1_0 (\Omega, b)\), the Rellich compactness theorem and by \eqref{eqLimqn} and \eqref{eqInfqn}, for every \(\varphi \in C^\infty_c (\Omega)\),
\[
\begin{split}
 0 & = \lim_{n \to \infty} \frac{1}{2} \int_{\Omega} \frac{\nabla v^n \cdot \nabla \varphi }{b} - \frac{1}{\varepsilon^2}  \int_{\Omega} b (v^n-q_\varepsilon^n)_+^p \varphi\\
 & = \frac{1}{2} \int_{\Omega} \frac{\nabla u \cdot \nabla \varphi }{b} - \frac{1}{\varepsilon^2}  \int_{\Omega} b (u-q_\varepsilon)_+^p \varphi.
\end{split}
\]
So, \(u\) is a weak solution of \eqref{problemP}  and \(u \in \nehari\). 

As \(u\) satisfies the Nehari constraint, by \eqref{eqLimqn} and by Fatou's lemma,
we can write 
\begin{align*}
\lim_{n \goesto \infty} \mathcal{E}_{\varepsilon}^n (u^n)
& = \lim_{n \goesto \infty} \frac{1}{\varepsilon^2}  \int_{\Omega} b (v^n - q_\varepsilon^n)_+^p u^n - \frac{1}{\varepsilon^2}  \int_{\Omega} b \frac{(v^n - q_\varepsilon^n)^{p + 1}_+}{p + 1}   \\ 
& \gt   \frac{1}{\varepsilon^2}  \int_{\Omega} b (u - q_\varepsilon)_+^p u - \frac{1}{\varepsilon^2}  \int_{\Omega} b \frac{(u - q_\varepsilon)^{p + 1}_+}{p + 1}   \\ 
&= \mathcal{E}_{\varepsilon}(u).\qedhere
 \end{align*}
\end{proof}

As a first application of lemma~\ref{lemmaPalaisSmaleInvariant}, we prove proposition~\ref{propositionExistenceInvariant}.

\begin{proof}[Proof of proposition~\ref{propositionExistenceInvariant}]
By lemma~\ref{lemmaCritical}, there exists a sequence Palais-Smale sequence \((u^n)_{n \in \N}\) associated to the critical level \(c_\varepsilon\), that is 
\begin{equation*}
\mathcal{E}_{\varepsilon}(u^n) \goesto c_\varepsilon \quad \text{ and } \quad
\mathcal{E}_{\varepsilon}'(u^n) \goesto 0 \ \text{ in } \bigl(H^1_0 (\Omega, b)\bigr)' \text{ as } n \goesto \infty. 
\end{equation*}
By lemma~\ref{lemmaPalaisSmaleInvariant} with \(\mathcal{E}_{\varepsilon}^n = \mathcal{E}_{\varepsilon}\), there exists \(u \in H^1_0 (\R^2_+, b) \setminus \{0\}\) such that \(\mathcal{E}_{\varepsilon}' (u) = 0\) and \(\mathcal{E}_{\varepsilon} (u) \lt c_\varepsilon\). 
Since \(u \ne 0\) and \(\mathcal{E}_{\varepsilon}' (u) = 0\), we have \(u \in \nehari\) and thus \(\mathcal{E}_{\varepsilon} (u) \gt c_\varepsilon\).
\end{proof}

We shall also need to know that \(c_\varepsilon\) depends continuously on \(q_\varepsilon\).

\begin{lemma}
\label{lemmaInvariantContinuous}
Let \(\alpha \gt 0\) and \(\varepsilon \in (0, 1)\).
If \(\Omega \subset \R^2_+\) is open and translation-invariant, if for every \(x \in \Omega\), \(b (x) = x_1^\alpha\), if \(q : \Omega \to \R\) and \(q^n : \Omega \to \R\) are measurable and translation-invariant and
if
\begin{align*}
& \text{for every \(x \in \Omega\)} &
\lim_{n \to \infty} q^n (x) &= q (x),
\end{align*}
and
\[
 \inf_{n \in \N} \inf_{x \in \Omega} \frac{q^n (x)}{x_1^{\alpha + 1}} > 0,
\]
then 
\[
 \lim_{n \to \infty} c_\varepsilon^n = c_\varepsilon.
\]
where \(c_\varepsilon^n\) denotes the critical level of the functional associated to \(q^n\).
\end{lemma}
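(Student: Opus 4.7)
The plan is to establish the two inequalities \(\limsup_{n \to \infty} c_\varepsilon^n \lt c_\varepsilon\) and \(c_\varepsilon \lt \liminf_{n \to \infty} c_\varepsilon^n\) separately, using the variational characterizations from lemma~\ref{lemmaCritical} for the upper bound and the concentration-compactness statement of lemma~\ref{lemmaPalaisSmaleInvariant} for the lower bound. Note first that by pointwise passage to the limit, \(\inf_{x \in \Omega} q(x)/x_1^{\alpha+1} \gt \inf_{n} \inf_{x} q^n(x)/x_1^{\alpha+1} > 0\), so proposition~\ref{propositionExistenceInvariant} applies to both the limit problem and each approximating problem, yielding minimizers \(u_\varepsilon \in \nehari\) and \(u^n \in \nehari^n\) with \(\mathcal{E}_{\varepsilon}(u_\varepsilon) = c_\varepsilon\) and \(\mathcal{E}_{\varepsilon}^n(u^n) = c_\varepsilon^n\).

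For the upper bound, I would use the minimizer \(u_\varepsilon\) as a test element: by lemma~\ref{lemmaCritical} applied to the \(n\)th problem, \(c_\varepsilon^n \lt \sup_{t > 0} \mathcal{E}_{\varepsilon}^n (t u_\varepsilon)\). The map \(t \mapsto \mathcal{E}_{\varepsilon}^n(t u_\varepsilon)\) tends to \(0\) as \(t \to 0\) and to \(-\infty\) as \(t \to \infty\), and the uniform lower bound \(q_\varepsilon^n \gt \frac{\log(1/\varepsilon)}{2} W x_1^{\alpha+1}\) forces its maximum to be attained at some \(t_n\) lying in a compact interval \([t_{\min}, t_{\max}] \subset (0, \infty)\) independent of \(n\). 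Lebesgue's dominated convergence theorem, applied with the integrable majorant supplied by lemma~\ref{lemmaSobolevTronque}, then gives \(\mathcal{E}_{\varepsilon}^n(t u_\varepsilon) \to \mathcal{E}_{\varepsilon}(t u_\varepsilon)\) uniformly in \(t \in [t_{\min}, t_{\max}]\), so that \(\sup_{t > 0} \mathcal{E}_{\varepsilon}^n(t u_\varepsilon) \to \sup_{t > 0} \mathcal{E}_{\varepsilon}(t u_\varepsilon) = \mathcal{E}_{\varepsilon}(u_\varepsilon) = c_\varepsilon\), yielding the first inequality.

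For the lower bound, I would invoke lemma~\ref{lemmaPalaisSmaleInvariant} with the minimizing critical points \(u^n\). Hypotheses \eqref{eqLimqn} and \eqref{eqInfqn} are built into the statement; \eqref{eqConvergencedEe} is automatic since \(\mathcal{E}_{\varepsilon}^n{}'(u^n) = 0\); \eqref{eqlimsupEe} follows from the upper bound just established. The crucial positivity requirement \eqref{eqliminfEe} reduces, by lemma~\ref{lemmaCritical}, to \(c_\varepsilon^n \gt \inf_{v} \max_{t > 0} \mathcal{E}_{\varepsilon}^n(tv)\), and the lower bound furnished by lemma~\ref{lemmaEeWell} depends only on \(p\), \(\alpha\), \(\varepsilon\) and \(\inf_{n,x} q^n(x)/x_1^{\alpha+1} > 0\), so it is uniform in \(n\). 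Lemma~\ref{lemmaPalaisSmaleInvariant} therefore produces (after translation) a nontrivial \(u \in H^1_0(\Omega, b)\) with \(\mathcal{E}_{\varepsilon}'(u) = 0\) and \(\mathcal{E}_{\varepsilon}(u) \lt \liminf_{n \to \infty} c_\varepsilon^n\); since \(u \in \nehari\), one gets \(c_\varepsilon \lt \mathcal{E}_{\varepsilon}(u) \lt \liminf_{n \to \infty} c_\varepsilon^n\).

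The main obstacle is the uniformity needed in both halves: on the upper side, controlling that the maximizers \(t_n\) stay in a fixed compact subset of \((0, \infty)\) so that dominated convergence may be used, and on the lower side, verifying the uniform positive lower bound \eqref{eqliminfEe}. Both rely essentially on the uniform coercivity constant \(W = \inf_n \inf_x q^n/x_1^{\alpha+1}\) combined with the truncated Sobolev inequality of lemma~\ref{lemmaSobolevTronque}; once this uniformity is in place, the rest of the argument is a direct combination of the already established existence and compactness results.
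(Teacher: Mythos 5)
Your proof is correct and follows essentially the same two-sided argument as the paper: the upper bound by rescaling the limit minimizer onto the approximate Nehari manifolds (with the maximizers \(t_n\) confined to a compact interval), and the lower bound via lemma~\ref{lemmaPalaisSmaleInvariant}. The only cosmetic difference is that you feed the exact minimizers \(u^n\) from proposition~\ref{propositionExistenceInvariant} into lemma~\ref{lemmaPalaisSmaleInvariant}, whereas the paper uses an almost-critical Palais--Smale sequence obtained from lemma~\ref{lemmaCritical} by a diagonal argument; both satisfy the hypotheses \eqref{eqliminfEe}--\eqref{eqConvergencedEe} for the same reasons and lead to the same conclusion.
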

\begin{proof}
By proposition~\ref{propositionExistenceInvariant}, there exists \(u \in H^1_0 (\Omega, b)\) such that \(\mathcal{E}_{\varepsilon} (u) = c_\varepsilon\) and \(\mathcal{E}_{\varepsilon}' (u) = 0\).
Choose \(t_n > 0\) such that 
\[
  \max_{t > 0} \mathcal{E}_{\varepsilon}^n (tu) = \mathcal{E}_{\varepsilon}^n (t_n u).
\]
One has \(\lim_{n \to \infty} t_n = 1\) and thus 
\[
 \limsup_{n \to \infty} c_\varepsilon^n \lt \lim_{n \to \infty} \mathcal{E}_{\varepsilon}^n (t_n u) = \mathcal{E}_{\varepsilon} (tu) = c_\varepsilon.
\]

On the other hand, by lemma~\ref{lemmaCritical} and a diagonal argument, there exists a sequence \((u^n)_{n \in \N}\) in \(H^1_0 (\Omega, b)\) such that
\begin{equation*}
\mathcal{E}_{\varepsilon}^n(u^n) - c_\varepsilon^n \to 0 \quad \text{ and } \quad
\mathcal{E}_{\varepsilon}^n{}'(u^n) \goesto 0 \ \text{ in } \bigl(H^1_0 (\Omega, b)\bigr)' \text{ as } n \goesto \infty. 
\end{equation*}
By lemma~\ref{lemmaPalaisSmaleInvariant}, there exists \(u \in H^1_0 (\Omega, b) \setminus \{0\}\)  such that 
and \(\mathcal{E}_{\varepsilon}' (u) = 0\),
\[
 \liminf_{n \to \infty} c_\varepsilon^n = \liminf_{n \to \infty} \mathcal{E}_{\varepsilon}^n(u^n) \gt \mathcal{E}_{\varepsilon} (u).
\]
Since \(\mathcal{E}_{\varepsilon}'(u) = 0\) we have 
\[
 \mathcal{E}_{\varepsilon} (u) \gt c_\varepsilon.\qedhere
\]
\end{proof}

\subsubsection{Existence by strict inequalities}\label{SePerturbedProblem}
We turn now to the study of the problem in an unbounded subset of \(\R^2_+\) that needs not to be invariant under translations.

\begin{proposition}
\label{propositionStrict}
Let \(\Omega \subset \R^2_+\) be open and translation-invariant, \(\alpha \gt 0\) and \(\varepsilon \in (0, 1)\). 
Assume that for every \(x \in \Omega\), \(b (x) = x_1^\alpha\) and if \(q \in \Omega \to \) and \(\varepsilon > 0\),
\[
 \inf_{x \in \Omega} \frac{q (x)}{x_1^{\alpha + 1}} > 0,
\]
and that  
\[
  \liminf_{\abs{x} \to \infty} \frac{q (x)}{q^\infty (x)} \gt 1,
\]
where \(q^\infty : \Omega \to \R\) is measurable and translation-invariant and \(\inf_{x \in \Omega} \frac{q^{\infty}}{x_1^\alpha} > 0\).
If
\begin{equation*}
c_\varepsilon < c_\varepsilon^\infty,
\end{equation*}
where \(c_\varepsilon^\infty\) is the critical level defined by the functional associated to \(q^\infty\),
then there exists a solution \(u_{\varepsilon} \in H^1_0 (\Omega, b)\) of \eqref{problemP} such that \(\mathcal{E}_{\varepsilon}(u_{\varepsilon}) = c_\varepsilon\).
\end{proposition}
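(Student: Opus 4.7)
The plan is to execute the concentration--compactness argument of Lemma~\ref{lemmaPalaisSmaleInvariant} on a Palais--Smale sequence for \(\mathcal{E}_\varepsilon\) at the level \(c_\varepsilon\), using the strict inequality \(c_\varepsilon<c_\varepsilon^\infty\) to preclude the loss of compactness caused by mass sliding off to infinity along the translation-invariant direction. Lemma~\ref{lemmaEeWell} makes \(\mathcal{E}_\varepsilon\) well defined and of class \(C^1\) on \(H^1_0(\Omega,b)\) with a positive mountain pass level, and Lemma~\ref{lemmaCritical} furnishes a sequence \((u^n)\) with \(\mathcal{E}_\varepsilon(u^n)\to c_\varepsilon\) and \(\mathcal{E}_\varepsilon'(u^n)\to 0\). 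The estimate of Lemma~\ref{lemmeGradientEnergie} then bounds \((u^n)\) in \(H^1_0(\Omega,b)\); after extracting a weak limit \(u^n\weakto u\), one identifies \(u\) as a weak solution of \eqref{problemP} by passing to the limit against \(\varphi\in C^\infty_c(\Omega)\) (linear term by weak convergence, nonlinear term by local Rellich compactness and the continuity statement of Lemma~\ref{lemmaSobolevTronque}). If \(u\ne 0\) the proof concludes at once: \(u\in\nehari\) gives \(\mathcal{E}_\varepsilon(u)\gt c_\varepsilon\), while Fatou applied to the nonnegative Nehari integrand in
\[
\mathcal{E}_\varepsilon(u^n)=\frac{1}{\varepsilon^2}\int_\Omega b\Bigl(\tfrac{(u^n-q_\varepsilon)_+^p u^n}{2}-\tfrac{(u^n-q_\varepsilon)_+^{p+1}}{p+1}\Bigr)+o(1)
\]
yields the matching inequality \(\mathcal{E}_\varepsilon(u)\lt c_\varepsilon\).

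The substantive work is to exclude the alternative \(u\equiv 0\). Arguing as in the proof of Lemma~\ref{lemmaPalaisSmaleInvariant} and invoking Lemma~\ref{ThReseauBoules}, one obtains a sequence \((a^n)\subset\R\) along which \(\int_{\Omega\cap(\R_+\times(a^n-1,a^n+1))} b(u^n-\underline{q}_\varepsilon)_+^{p+1}\) is bounded away from zero; the assumption \(u\equiv 0\) combined with local Rellich compactness forces \(\abs{a^n}\to\infty\). Using the translation invariance of \(\Omega\) and \(b\), set \(v^n(x_1,x_2)=u^n(x_1,x_2+a^n)\) and \(\tilde q^n(x_1,x_2)=q(x_1,x_2+a^n)\), and denote by \(\mathcal{E}_\varepsilon^n\) the functional with weight \(\tilde q^n\). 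Then \(\mathcal{E}_\varepsilon^n(v^n)\to c_\varepsilon\), \((\mathcal{E}_\varepsilon^n)'(v^n)\to 0\), and, up to a subsequence, \(v^n\weakto v\) with \(v\ne 0\) (the nonlinear mass in the unit band around \(\{x_2=0\}\) survives in the limit).

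Because \(\abs{a^n}\to\infty\) and \(q^\infty\) is translation invariant, the hypothesis \(\liminf_{\abs{x}\to\infty}q/q^\infty\gt 1\) gives the pointwise bound \(\tilde q^n(x)\gt(1-\delta_n)q^\infty(x)\) with \(\delta_n\to 0\). Passing to the limit in the equation against a compactly supported test function (Rellich compactness together with the pointwise bound) identifies \(v\) as a critical point of the limiting functional \(\mathcal{E}_\varepsilon^\infty\), whence \(\mathcal{E}_\varepsilon^\infty(v)\gt c_\varepsilon^\infty\). Conversely, from \(\tilde q^n_\varepsilon\gt(1-\delta_n)q^\infty_\varepsilon\) one gets \((v^n-\tilde q^n_\varepsilon)_+\lt(v^n-(1-\delta_n)q^\infty_\varepsilon)_+\); applying Fatou's lemma to the nonnegative Nehari integrand after this substitution produces \(\mathcal{E}_\varepsilon^\infty(v)\lt\liminf_n\mathcal{E}_\varepsilon^n(v^n)=c_\varepsilon\). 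Together these bounds contradict \(c_\varepsilon<c_\varepsilon^\infty\), forcing \(u\ne 0\), and the proof is complete.

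The main technical obstacle is the passage to the limit in the nonlinear term of the translated problem, because the weights \(\tilde q^n\) do not converge pointwise to \(q^\infty\) but only dominate it asymptotically through the \(\liminf\) condition. Two complementary tools handle the two directions of one-sidedness: local Rellich compactness recovers the equation for \(v\) by absorbing the error \(\tilde q^n-q^\infty\) on compact subsets, whereas the monotonicity of the truncated nonlinearity in the weight, combined with Fatou on the nonnegative Nehari integrand, delivers the global upper bound on \(\mathcal{E}_\varepsilon^\infty(v)\) needed to activate the strict gap \(c_\varepsilon<c_\varepsilon^\infty\).
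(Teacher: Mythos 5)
Your setup---the bounded Palais--Smale sequence from lemma~\ref{lemmaCritical} and lemma~\ref{lemmeGradientEnergie}, the weak limit \(u\) solving \eqref{problemP}, and the Fatou argument when \(u\ne 0\)---coincides with the paper's. The genuine gap is in your treatment of the vanishing case \(u=0\). You translate in the \(x_2\) direction by \(a^n\) with \(\abs{a^n}\to\infty\) and want to identify the new weak limit \(v\ne 0\) as a critical point of \(\mathcal{E}_{\varepsilon}^\infty\); but the hypothesis only gives the \emph{one-sided} bound \(\liminf_{\abs{x}\to\infty} q/q^\infty\gt 1\), so the translated weights \(\Tilde{q}^n\) satisfy \(\liminf_n \Tilde{q}^n\gt q^\infty\) pointwise but need not converge to \(q^\infty\) at all (take \(q=q^\infty\,(2+\sin x_2)\)). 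Consequently \((v^n-\Tilde{q}^n_\varepsilon)_+^p\) does not converge to \((v-q^\infty_\varepsilon)_+^p\) against test functions, and \(v\) is at best a \emph{subsolution} of the limit equation, which does not yield \(\mathcal{E}_{\varepsilon}^\infty(v)\gt c_\varepsilon^\infty\). Your second inequality fails for the same one-sidedness, but in the opposite direction: writing the energies through the nonnegative Nehari integrand \(g_q(v)=\tfrac12 (v-q)_+^p v-\tfrac{1}{p+1}(v-q)_+^{p+1}\), replacing \(\Tilde{q}^n_\varepsilon\) by the smaller \((1-\delta_n)q^\infty_\varepsilon\) \emph{increases} \((v^n-\cdot)_+\) and hence increases \(g\) (at least for \(p\gt 2\); for \(1<p<2\) there is no monotonicity at all, since \(\partial_s\bigl(\tfrac12 s^p v-\tfrac{1}{p+1}s^{p+1}\bigr)=s^{p-1}(\tfrac{p}{2}v-s)\) changes sign). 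You therefore obtain \(\int b\, g_{\Tilde{q}^n_\varepsilon}(v^n)\lt \int b\, g_{(1-\delta_n)q^\infty_\varepsilon}(v^n)\) together with Fatou's bound \(\liminf_n \int b\, g_{(1-\delta_n)q^\infty_\varepsilon}(v^n)\gt \int b\, g_{q^\infty_\varepsilon}(v)\): two lower bounds on the same quantity, which cannot be chained into \(c_\varepsilon^\infty\lt\mathcal{E}_{\varepsilon}^\infty(v)\lt c_\varepsilon\).

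The paper circumvents both difficulties by never translating. It introduces the relaxed functional \(\mathcal{E}^\delta_{\varepsilon}\) with weight \((1-\delta)q^\infty_\varepsilon\), rescales the original sequence onto its Nehari manifold by factors \(\tau_n\), and compares \(\mathcal{E}_{\varepsilon}(\tau_n u^n)\) with \(\mathcal{E}^\delta_{\varepsilon}(\tau_n u^n)\): outside a fixed ball \(B(0,R)\) the inequality \(q\gt(1-\delta)q^\infty\) makes the difference of the nonlinear terms have a sign, and inside \(B(0,R)\) it vanishes by Rellich precisely because \(u=0\). This yields \(c_\varepsilon\gt c^\delta_\varepsilon\), and the continuity \(c^\delta_\varepsilon\to c^\infty_\varepsilon\) of lemma~\ref{lemmaInvariantContinuous} gives the contradiction. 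Your translation strategy would become correct if the hypothesis were strengthened to \(\lim_{\abs{x}\to\infty}q/q^\infty=1\), since then \(\Tilde{q}^n\to q^\infty\) pointwise and lemma~\ref{lemmaPalaisSmaleInvariant} applies verbatim to the translated sequence; under the stated \(\liminf\) hypothesis it does not.
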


This kind of results goes back to the concentration-compactness method of P.-L. Lions \cite{Li84}. The presentation and the proof that we are giving are inspired by Rabinowitz \cite{Ra92} (see also \cite{SmVS10}).

\begin{proof}[Proof of proposition~\ref{propositionStrict}]
By lemma~\ref{lemmaCritical}, there exists a Palais-Smale sequence \((u^n)_{n \in \N}\) at level \(c_\varepsilon\). 
As in the proof of proposition~\ref{propositionExistenceInvariant}, by lemma~\ref{lemmeGradientEnergie}, the sequence is bounded in \(H^1_0 (\Omega, b)\) and we can thus assume without loss of generality that \(u^n \weakto u\) in \(H^1_0 (\Omega, b)\) as \(n \to \infty\). One has by Rellich's theorem for every \(\varphi \in C^\infty_c (\Omega)\)
\[
\begin{split}
 \frac{1}{2} \int_{\Omega} \frac{\nabla u \cdot \nabla \varphi }{b} - \frac{1}{\varepsilon^2}  \int_{\Omega} b (u-q_\varepsilon)_+^p \varphi & = \lim_{n \to \infty} \frac{1}{2} \int_{\Omega} \frac{\nabla u^n \cdot \nabla \varphi }{b} - \frac{1}{\varepsilon^2}  \int_{\Omega} b (u^n-q_\varepsilon)_+^p \varphi\\
 & = 0,
\end{split}
\]
so that \(u\) solves \eqref{problemP}. 

If \(u \ne 0\), then \(u \in \nehari\) and \(\mathcal{E}_{\varepsilon}(u) \geq c_\varepsilon\). 
Moreover, by Fatou's lemma,
\begin{align*}
 \mathcal{E}_{\varepsilon} (u) & = \frac{1}{\varepsilon^2}\int_{\Omega}  \frac{1}{2} (u-q_\varepsilon)^p_+ u - \frac{1}{p+1} (u-q_\varepsilon)^{p+1}_+  \\ 
 & \lt \liminf_{n \goesto \infty} \frac{1}{\varepsilon^2}\int_{\Omega} \frac{1}{2} (u^n-q_\varepsilon)^p_+ u^n - \frac{1}{p+1} (u^n-q_\varepsilon)^{p+1}_+  \\
 & = c_\varepsilon.
 \end{align*}
Hence we have \(\mathcal{E}_{\varepsilon} (u) = c_\varepsilon\) and the result follows.

If \(u = 0\) on \(\Omega\), for every \(\delta > 0\), define the energy functional \(\mathcal{E}^\delta_{\varepsilon}\) on \(H^1_0 (\R^2_+, b)\) by
\begin{equation*}
\mathcal{E}^\delta_{\varepsilon} (v) = \frac{1}{2} \int_{\Omega} \frac{\abs{\nabla v}^2}{b} - \frac{1}{(p+1) \varepsilon^2} \int_{\Omega} b (v-(1 - \delta) q_\varepsilon^\infty)^{p+1}_+,
\end{equation*}
where \(q_\varepsilon^\infty = \log \tfrac{1}{\varepsilon} q^\infty\) and the corresponding critical level
\begin{equation*}
c^{\delta}_\varepsilon = \inf_{v \in H^1_0 (\R^2_+, b) \setminus \{0\}} \sup_{t \gt 0} \mathcal{E}^\delta_{\varepsilon} (t v).
\end{equation*}
Choose now \(\tau_n\) such that \(\max_{\tau>0} \mathcal{E}^\delta_{\varepsilon}(\tau u^n) = \mathcal{E}^\delta_{\varepsilon}(\tau_n u^n)  \). We claim that the sequence \((\tau_n)_{n \in \N}\) is bounded. 
One has
\begin{equation*}
\begin{split}
  (\tau_n)^2  \int_{\R^2_+} \frac{\abs{\nabla u^n}^2}{b} & =  \frac{1}{\varepsilon^2} \int_{\R^2_+} b (\tau_nu^n-(1 - \delta) q_\varepsilon^\infty)^{p}_+ \tau_nu^n \\ 
  & \geq \max(\tau_n, 1)^{p+1} \frac{1}{\varepsilon^2} \int_{\R^2_+} b (u^n-(1 - \delta) q_\varepsilon^\infty)^{p}_+u^n.
\end{split}
\end{equation*}
Choosing \(R > 0\) such that \(q \gt (1 - \delta) q^\infty\) in \(\Omega \setminus B (0, R)\), note that by Rellich's compactness theorem, since \(\alpha \ge 0\),
\begin{equation*}
\begin{split}
  \liminf_{n \to \infty} \frac{1}{\varepsilon^2} \int_{\Omega} b (u^n-(1 - \delta) q_\varepsilon^\infty)^{p}_+ u^n
 & \gt \liminf_{n \goesto \infty} \frac{1}{\varepsilon^2} \int_{\Omega \setminus B (0, R)} b (u^n- q_\varepsilon)^{p}_+ u^n\\
&\gt \liminf_{n \goesto \infty} \frac{1}{\varepsilon^2} \int_{\Omega} b (u^n-q_\varepsilon)^{p}_+ u^n,
\end{split}
\end{equation*}
and that 
\[
\frac{1}{\varepsilon^2} \int_{\Omega} b (u^n- q_\varepsilon)^{p}_+ u^n 
\gt 2 \mathcal{E}_{\varepsilon} (u^n) - \dualprod{ \mathcal{E}_{\varepsilon}' (u^n)}{ u^n },
\]
therefore,
\[
 \liminf_{n \to \infty} \frac{1}{\varepsilon^2} \int_{\R^2_+} b (u_n - (1 - \delta) q_\varepsilon^\infty)^{p}_+u^n
 \gt 2 c_\varepsilon > 0,
\]
so that the sequence \((\tau_n)_{n \in \N}\) is bounded.

We compute
\begin{equation*}
\begin{split}
\mathcal{E}_{\varepsilon} (\tau_n u^n)  
& = \mathcal{E}^\delta_{\varepsilon} (\tau_nu^n) +\frac{1}{(p + 1) \varepsilon^2}\int_{\Omega} b (\tau_nu^n - (1 - \delta)q_\varepsilon^\infty)^{p + 1}_+ - b (\tau_n u^n - q_\varepsilon)^{p + 1}_+.
\end{split}
\end{equation*}
Choosing \(R\) as previously,
\[
  \int_{\Omega \setminus B (0, R)} b (\tau_nu^n - (1 - \delta) q_\varepsilon^\infty)^{p + 1}_+ - b (\tau_n u^n - q_\varepsilon)^{p + 1}_+ \gt 0
\]
and by Rellich's theorem, since \(\alpha \ge 0\) and the sequence \((\tau_n)_{n \in \N}\) is bounded
\[
 \lim_{n \to \infty} \int_{\Omega \cap B (0, R)} b (\tau_nu^n - (1 - \delta) q_\varepsilon^\infty)^{p + 1}_+ - b (\tau_n u^n - q_\varepsilon)^{p + 1}_+ = 0.
\]
We have thus 
\[
 \lim_{n \to \infty} \mathcal{E}_{\varepsilon} (\tau_n u^n) \gt \limsup_{n \to \infty} \mathcal{E}^\delta_{\varepsilon} (\tau_nu^n)
\]
and because \((u^n)_{n \in \N}\) is a Palais-Smale sequence we conclude that 
\begin{equation*}
c_\varepsilon \gt c^{\delta}_\varepsilon.
\end{equation*}
Since by lemma~\ref{lemmaInvariantContinuous},
\(
 \lim_{\delta \to 0} c^{\delta}_\varepsilon = c^\infty_\varepsilon,
\)
we conclude that
\begin{equation*}
c_\varepsilon \geq c_\varepsilon^\infty,
\end{equation*}
a contradiction with the assumed strict inequality.
\end{proof}

\section{Asymptotics of solutions}
\label{sectionAsymptotics}

In this section we study the asymptotics of solutions to \eqref{problemP}. 
We make the following assumptions on \(\Omega\), \(b\) and \(q\):
\begin{enumerate}[$(\mathcal{A}_1)$]
 \item for every \(\eta > 0\), there exists \(\delta  > 0\) such if \(x, y \in \Omega\) and \(\abs{x - y} \lt \delta \dist (x, \partial \Omega)\), then 
\[
 \Bigabs{\log \frac{b (x)}{b (y)}} \lt \eta,
\]
and
\[
 \Bigabs{\log \frac{q (x)}{q (y)}} \lt \eta,
\]  

\item there exists \(C \in \R\) such that for every \(x \in \Omega\),
\[
  \log \Bigl(1 + \dfrac{2 \dist (x, \partial \Omega) b (x)^{(p + 1)/2}}{q (x)^{(p - 1)/2}}\Bigr)
\lt C \frac{q (x)^2}{b(x)} ,
\]
\item \label{eqq} \(q \in H^1_\mathrm{loc} (\Omega)\), \(\inf_{\Omega} q > 0\) and 
\begin{equation*}
 - \dive \frac{\nabla q}{b} = 0
\end{equation*}
weakly in \(\Omega\),
\item the set \(\R^2 \setminus \Omega\) is unbounded and connected,
\item the functional \(\mathcal{E}_{\varepsilon}\) is well-defined and differentiable on \(H^1_0 (\Omega, b)\).
\end{enumerate}

The assumption \((\mathcal{A}_1)\) is equivalent with the uniform continuity with respect to the distance-ratio metric on \(\Omega\) of \(\log b\) and \(\log q\). 
When \(\Omega\) is a uniform domain, this is equivalent with the uniform continuity with respect to the quasi-hyperbolic metric on \(\Omega\). Those metrics are equivalent to the Poincar\'e metric on the ball and on the half-plane \citelist{\cite{Li07}\cite{GeOs79}\cite{GePa76}}.
Assumption \((\mathcal{A}_5)\) is satisfied under the assumptions of proposition~\ref{propositionExistenceBounded} or of lemma~\ref{lemmaEeWell}.

An important consequence of \((\mathcal{A}_3)\) is the following identity:

\begin{lemma}
\label{lemmaChangeWeight}
For every \(u \in H^1_0 (\Omega, b)\),
\[
 \int_{\Omega} \frac{\abs{\nabla u}^2}{b}
= \int_{\Omega} \frac{q^2}{b}\Bigabs{\nabla \Bigl(\frac{u}{q}\Bigr)}^2.
\]
\end{lemma}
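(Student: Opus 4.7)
The plan is to establish this as a Picone-type identity whose integrated form reduces to testing the weak equation for \(q\) against \(u^2/q\).

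First, since \(\inf_{\Omega} q > 0\) by \((\mathcal{A}_3)\), the function \(v = u/q\) is well-defined. I would begin with the pointwise algebraic identity obtained from \(u = qv\): expanding yields
\[
 \abs{\nabla u}^2 = q^2 \abs{\nabla v}^2 + v^2 \abs{\nabla q}^2 + 2 q v \, \nabla v \cdot \nabla q,
\]
and by rewriting \(2 q v \, \nabla v = \nabla (q v^2) - v^2 \nabla q\), one gets
\[
 \abs{\nabla u}^2 = q^2 \abs{\nabla v}^2 + \nabla q \cdot \nabla \Bigl(\frac{u^2}{q}\Bigr).
\]
Dividing by \(b\) and integrating, the lemma reduces to
\[
 \int_{\Omega} \frac{\nabla q}{b} \cdot \nabla \Bigl(\frac{u^2}{q}\Bigr) = 0.
\]

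For \(u \in C^{\infty}_c (\Omega)\), the function \(u^2/q\) has compact support in \(\Omega\) and lies in \(H^1 (\Omega)\), because \(q \in H^1_{\mathrm{loc}} (\Omega)\) and \(q\) is bounded below on \(\supp u\) by \(\inf_\Omega q > 0\). Hence it is an admissible test function in the weak equation \(- \dive (\nabla q / b) = 0\) provided by \((\mathcal{A}_3)\), and the cross term vanishes. This establishes the identity on \(C^{\infty}_c (\Omega)\).

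For general \(u \in H^1_0 (\Omega, b)\), I would proceed by density. Pick \((u_n)_{n \in \N}\) in \(C^{\infty}_c (\Omega)\) with \(u_n \to u\) in \(H^1_0 (\Omega, b)\). Applying the identity to the difference \(u_n - u_m\) shows that \((u_n/q)_{n \in \N}\) is Cauchy with respect to the seminorm \(w \mapsto (\int_{\Omega} \frac{q^2}{b} \abs{\nabla w}^2)^{1/2}\); its limit must equal \(u/q\) after extracting a locally convergent subsequence (using that \(\inf_\Omega q > 0\) and local Sobolev embedding). Passing to the limit then extends the identity to all of \(H^1_0 (\Omega, b)\). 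The main technical obstacle is precisely this density step, specifically verifying the admissibility of \(u^2/q\) in the weak formulation of \((\mathcal{A}_3)\) and identifying the weak limit of \(u_n/q\); everything else is the pointwise product rule.
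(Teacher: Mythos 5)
Your proof is correct and is essentially the paper's own argument: the paper likewise tests the weak equation \(-\dive(\nabla q/b)=0\) from \((\mathcal{A}_3)\) against \(u^2/q\) and invokes the same pointwise identity relating \(\abs{\nabla u}^2\), \(q^2\abs{\nabla(u/q)}^2\) and \(\nabla q\cdot\nabla(u^2/q)\). The additional care you take with the admissibility of \(u^2/q\) and the density step is a legitimate elaboration of details the paper leaves implicit, not a different method.
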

\begin{proof}
 Take \(\frac{u^2}{q}\) as a test function in \eqref{problemP} and observe that 
\[
 2\nabla q \cdot \nabla \Bigl( \frac{u^2}{q}\Bigr)
=
 \abs{\nabla u}^2 - q^2 \Bigabs{\nabla \Bigl(\frac{u}{q} \Bigr)}^2.\qedhere
\]
\end{proof}

\subsection{Upper bound on the energy}\label{SeUpperBound}

As a first step, we prove an upper bound on \(c_\varepsilon\).
\begin{proposition} \label{ThUpperBound}
One has 
\begin{equation*}
\limsup_{\varepsilon \to 0} \frac{c_{\varepsilon}}{\log \frac{1}{\varepsilon}} \lt \pi \inf_\Omega \frac{q^2}{b}.
\end{equation*}
\end{proposition}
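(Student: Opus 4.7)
My plan is to apply the mountain-pass characterization $c_{\varepsilon} \lt \sup_{t \gt 0} \mathcal{E}_{\varepsilon}(tv)$ from lemma~\ref{lemmaCritical} to a concentrated logarithmic vortex profile. Fix $\eta > 0$ and choose $x_0 \in \Omega$ with $q(x_0)^2/b(x_0) \lt \inf_{\Omega} q^2/b + \eta$. Using the continuity built into $(\mathcal{A}_1)$ and $\inf_\Omega q > 0$ from $(\mathcal{A}_3)$, I pick $R > 0$ small enough that $\overline{B(x_0, R)} \subset \Omega$ and both $b$ and $q$ remain within a factor $1 + O(\eta)$ of $b(x_0)$ and $q(x_0)$ on $B(x_0, R)$. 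For $0 < \varepsilon < R$, I would take the test function
\[
v_\varepsilon(x) = q(x_0) \Bigl(\log \frac{R}{\max(\abs{x-x_0}, \varepsilon)}\Bigr)_+,
\]
which is Lipschitz and compactly supported in $\overline{B(x_0, R)} \subset \Omega$, hence lies in $H^1_0 (\Omega, b)$.

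A direct computation in polar coordinates, using that on the annulus $\varepsilon < \abs{x - x_0} < R$ the gradient is $\abs{\nabla v_\varepsilon} = q(x_0)/\abs{x - x_0}$ and that $b$ is close to $b(x_0)$ there, gives
\[
\int_\Omega \frac{\abs{\nabla v_\varepsilon}^2}{b} = \frac{2\pi q(x_0)^2}{b(x_0)} \log \frac{1}{\varepsilon} \bigl(1 + O(\eta) + o(1)\bigr),
\]
half of which already equals the target leading term.

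The main step is the estimate of $\sup_{t \gt 0} \mathcal{E}_{\varepsilon}(t v_\varepsilon)$. Since $\max v_\varepsilon = q(x_0) \log(R/\varepsilon)$ while $q_\varepsilon \approx q(x_0) \log \tfrac{1}{\varepsilon}$ near $x_0$, the nonlinearity activates only for $t$ above a threshold $\log \frac{1}{\varepsilon}/\log (R/\varepsilon) = 1 + O(1/\log \tfrac{1}{\varepsilon})$, so I parametrise the maximum by $t = 1 + s/\log \frac{1}{\varepsilon}$ with $s \in \R$. Under the rescaling $x = x_0 + \varepsilon y$, the superlevel set $\{t v_\varepsilon > q_\varepsilon\}$ is asymptotically $B(x_0, R e^s \varepsilon)$ (empty when $s < -\log R$), and
\[
\frac{1}{\varepsilon^2} \int_\Omega b (t v_\varepsilon - q_\varepsilon)^{p+1}_+ \du x \goesto M(s) := b(x_0) q(x_0)^{p+1} \int_{B(0, R e^s)} \bigl(\log (Re^s/\abs{y})\bigr)^{p+1} \du y
\]
as $\varepsilon \goesto 0$, with $M(s) \sim C R^2 e^{2s}$ as $s \goesto +\infty$.

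Putting the two estimates together gives
\[
\mathcal{E}_{\varepsilon}(t v_\varepsilon) = \pi \frac{q(x_0)^2}{b(x_0)} \log \tfrac{1}{\varepsilon} + 2\pi s \frac{q(x_0)^2}{b(x_0)} - \frac{M(s)}{p+1} + O(\eta) \log \tfrac{1}{\varepsilon} + o(1),
\]
and the exponential growth of $M(s)$ forces the supremum over $s$ to be attained at some bounded $s_* \in \R$, keeping the middle terms $O(1)$. Dividing by $\log \frac{1}{\varepsilon}$, passing to $\limsup_{\varepsilon \goesto 0}$, and then letting $\eta \goesto 0$ will finish the proof. The main obstacle is precisely the choice of scale $(t - 1) \sim 1/\log \frac{1}{\varepsilon}$: at this resolution the nonlinear correction stays $O(1)$, whereas at the coarser scale $t - 1 = O(1)$ it would grow like $(\log \frac{1}{\varepsilon})^{p+1}$ and overwhelm the Dirichlet term, so the exponential factor in $M(s)$ is essential to localise the sup in a bounded $s$-range.
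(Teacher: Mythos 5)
Your overall strategy --- a truncated logarithmic test function concentrated at a near-minimizer of $q^2/b$, the characterization $c_\varepsilon \lt \sup_{t>0}\mathcal{E}_{\varepsilon}(tv_\varepsilon)$ from lemma~\ref{lemmaCritical}, and the observation that the superlinear growth of the nonlinear term pins the optimal $t$ at $1+O(1/\log\tfrac1\varepsilon)$ --- is sound and close in spirit to the paper's proof. But there is one genuine gap, located exactly where your construction departs from the paper's. You take $v_\varepsilon = q(x_0)\bigl(\log\tfrac{R}{\max(\abs{x-x_0},\varepsilon)}\bigr)_+$ with the \emph{constant} $q(x_0)$, and then assert that $\{tv_\varepsilon>q_\varepsilon\}$ is asymptotically $B(x_0,Re^s\varepsilon)$ and that the rescaled nonlinear term converges to $M(s)$. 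Writing $q(x)=q(x_0)(1+\delta(x))$, at $x=x_0+\varepsilon y$ one finds $tv_\varepsilon-q_\varepsilon = q(x_0)\bigl[\log(Re^s/\abs{y})-\delta(x)\log\tfrac1\varepsilon\bigr]+o(1)$: the continuity error of $q$ gets multiplied by $\log\tfrac1\varepsilon$ through $q_\varepsilon=\log\tfrac{1}{\varepsilon}\,q$. Mere continuity of $q$ --- which is all that $(\mathcal{A}_1)$ and $(\mathcal{A}_3)$ give on their face --- does not force $\delta(x)\log\tfrac1\varepsilon\to0$ on balls of radius $O(\varepsilon)$, so neither the shape of the activation set nor the limit $M(s)$ follows as stated. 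This is precisely why the paper uses $v_\varepsilon^\tau=q(x)\bigl(U(\tfrac{x-\Hat x}{\varepsilon})+\log\tfrac{\tau}{\varepsilon}\bigr)\varphi$: with the factor $q(x)$ in place of $q(x_0)$ one has $v_\varepsilon^\tau-q_\varepsilon=q(x)\bigl(U(\tfrac{x-\Hat x}{\varepsilon})+\log\tau\bigr)$ \emph{exactly} on $B(\Hat x,\rho)$, so the superlevel set is a fixed rescaled ball with no regularity of $q$ required, and lemma~\ref{lemmaChangeWeight} still yields the clean Dirichlet asymptotics.

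The gap is repairable in two ways. Either invoke De Giorgi--Nash--Moser regularity for the weak solution $q$ of $-\dive(\nabla q/b)=0$ (the paper itself notes after proposition~\ref{propImprovedUpper} that $q$ is locally H\"older, hence $\delta(x)=O(\varepsilon^{\gamma})$ on $B(x_0,C\varepsilon)$ and $\delta\log\tfrac1\varepsilon\to0$), or simply insert the factor $q(x)$ into your test function, after which your computation goes through essentially unchanged --- your parameter $s$ then plays the role of the paper's $\log\tau$, and your localization of the supremum in $t$ replaces the paper's intermediate-value argument producing the Nehari time $\tau_\varepsilon$. Two smaller points: your formula for $M(s)$ should reflect the truncation of the profile inside $\abs{y}<1$ (the integrand is $\bigl(\log\tfrac{Re^s}{\max(\abs{y},1)}\bigr)^{p+1}$, not $\bigl(\log\tfrac{Re^s}{\abs{y}}\bigr)^{p+1}$), which does not affect the growth in $s$; and the claim that $\sup_{t>0}$ is attained in the window $t=1+O(1/\log\tfrac1\varepsilon)$ deserves the explicit estimate for $t$ bounded away from $1$ that your closing remark about the $(\log\tfrac1\varepsilon)^{p+1}$ growth only sketches.
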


\begin{proof}
Choose \(U \in C^\infty(\R^2)\) such that \(U (x)=\log \frac{1}{\abs{x}}\) if \(\abs{x}\gt 1\) and \(U (x) > 0\) if \(\abs{x} < 1\), choose \(\rho > 0\) such that \(B (\Hat{x}, 2 \rho) \subset \Omega\) and choose a cut-off function \(\varphi \in C^\infty_c(B (0, 2 \rho))\) such that \(\varphi = 1\) in \(B(\Hat{x}, \rho)\).
Consider, for \(\tau \in \R\), the function \(v_{\varepsilon}^\tau \in C^\infty_c (\Omega)\) defined for \(x \in \Omega\) by
\begin{equation*}
v_{\varepsilon}^\tau (x) = q(x) \bigl( U \bigl( \tfrac{x-\Hat{x}}{\varepsilon} \bigr) + \log \tfrac{\tau}{\varepsilon} \bigr)  \varphi \bigl(\tfrac{x - \Hat{x}}{\rho}\bigr)
\end{equation*}
and define the function \(g_\varepsilon : \R \to \R\) for \(t \in \R\) by
\begin{equation*}
g_{\varepsilon} (\tau) = \frac{1}{\log \tfrac{1}{\varepsilon}} \dualprod{ \mathcal{E}_{\varepsilon}' (v_{\varepsilon}^\tau)}{ v_{\varepsilon}^\tau } = \frac{1}{\log \tfrac{1}{\varepsilon}} \Bigl( \int_{\Omega}  \frac{\abs{\nabla v_{\varepsilon}^\tau}^2}{b} - \frac{1}{\varepsilon^2} \int_{\Omega}  \ b (v_{\varepsilon}^\tau -q_\varepsilon)_+^p v_{\varepsilon}^\tau\Bigr).
\end{equation*}
We are going to show that for every \(\varepsilon\) small enough, there exists \(\tau_\varepsilon\) such that \(g^{\varepsilon}(\tau_\varepsilon) = 0\).

By lemma~\ref{lemmaChangeWeight}, we have
\begin{equation}
\label{eqQuadraticRewrite}
\int_{\Omega}  \frac{\abs{\nabla v_{\varepsilon}^\tau}^2}{b}
= \int_{\Omega}  \frac{q^2}{b}\Bigabs{\nabla \Bigl( \Bigl(\frac{v_{\varepsilon}^\tau}{q}\Bigr) \Bigr)}^2.
\end{equation}
First one observes that there exists \(C > 0\) such that for every \(\tau > 0\)
\begin{equation}
\label{eqQuadraticOuter}
 \int_{B (\Hat{x}, 2\rho) \setminus B (\Hat{x},\rho)}  \frac{q^2}{b}\Bigabs{\nabla \Bigl(\frac{v_{\varepsilon}^\tau}{q}\Bigr)}^2
 \lt C \bigl( 1 + \bigabs{\log \tfrac{\tau}{\rho}} \bigr)
\end{equation}
and that if \(\varepsilon \lt \rho\),
\[
 \int_{B (\Hat{x}, \varepsilon)} \frac{q^2}{b}\Bigabs{\nabla \Bigl(\frac{v_{\varepsilon}^\tau}{q}\Bigr)}^2
= \int_{B (0, 1)} \frac{q (\Hat{x} + \varepsilon y)^2}{b (\Hat{x} + \varepsilon y)} \abs{\nabla U (y)}^2 \du y,
\]
and thus 
\begin{equation}
\label{eqQuadraticInner}
 \lim_{\varepsilon \to 0} \int_{B (\Hat{x}, \varepsilon)} \frac{q^2}{b}\Bigabs{\nabla \Bigl(\frac{v_{\varepsilon}^\tau}{q}\Bigr)}^2 = \frac{q (\Hat{x})^2}{b (\Hat{x})}\int_{B (0, 1)} \abs{\nabla U}^2,
\end{equation}
uniformly in \(\tau > 0\).

Finally, since \(U (x) = \log \frac{1}{\abs{x}}\) if \(\abs{x} \gt 1\), we have if \(\varepsilon \lt \delta \lt \rho\),
\[
\begin{split}
 \biggabs{\frac{q (\Hat{x})^2}{b (\Hat{x})} 2 \pi \log \frac{\rho}{\varepsilon} - \int_{B (\Hat{x},\rho) \setminus B (\Hat{x}, \varepsilon)}\frac{q^2}{b}\Bigabs{\nabla \Bigl(\frac{v_{\varepsilon}^\tau}{q}\Bigr)}^2}
& \lt \int_{B (\Hat{x}, \rho)} \Bigabs{\frac{q (\Hat{x})^2}{b (\Hat{x})}- \frac{q (x)^2}{b(x)} } \frac{1}{\abs{x - \Hat{x}}^2} \du x\\
& \lt 2 \pi \Bigl( \omega (\rho) \log \frac{\rho}{\delta} + \omega (\delta) \log \frac{\varepsilon}{\delta} \Bigr),
\end{split}
\]
where
\[
 \omega (\delta) = \sup_{x \in B (\Hat{x}, \delta)} \Bigabs{\frac{q (\Hat{x})^2}{b (\Hat{x})}- \frac{q (x)^2}{b (x)}}.
\]
We have thus for every \(\delta > 0\),
\[
  \limsup_{\varepsilon \to 0}\, \Bigabs{2 \pi \frac{q (\Hat{x})^2}{b (\Hat{x})} - \frac{1}{\log \tfrac{1}{\varepsilon}} \int_{B (\Hat{x},\rho) \setminus B (\Hat{x}, \varepsilon)}\frac{q^2}{b}\Bigabs{\nabla \Bigl(\frac{v_{\varepsilon}^\tau}{q}\Bigr)}^2} \lt 2 \pi \omega (\delta).
\]
By continuity of \(q\) and \(b\), \(\lim_{\delta \to 0} \omega (\delta) = 0\) and thus we have proved
\begin{equation}
\label{eqQuadraticMiddle}
 \lim_{\varepsilon \to 0} \frac{1}{\log \tfrac{1}{\varepsilon}} \int_{B (\Hat{x},\rho) \setminus B (\Hat{x}, \varepsilon)}\frac{\abs{\nabla v_{\varepsilon}^\tau}^2}{b} = 2 \pi \frac{q (\Hat{x})^2}{b (\Hat{x})},
\end{equation}
uniformly in \(\tau > 0\).
Gathering \eqref{eqQuadraticRewrite}, \eqref{eqQuadraticOuter}, \eqref{eqQuadraticInner} and \eqref{eqQuadraticMiddle}, we have proved that 
\begin{equation}
\label{limQuadratic}
 \lim_{\varepsilon \to 0} \frac{1}{\log \tfrac{1}{\varepsilon}} \int_{\Omega}\frac{\abs{\nabla v_{\varepsilon}^\tau}^2}{b}
= 2 \pi \frac{q (\Hat{x})^2}{b (\Hat{x})},
\end{equation}
uniformly in \(\tau > 0\) in compact subsets. 

Now note that 
\begin{equation}
\label{decompNonlinear}
 \frac{1}{\varepsilon^2} \int_{\Omega} b (v_{\varepsilon}^\tau  - q_\varepsilon)_+^p v_{\varepsilon}^\tau
= 
 \frac{1}{\varepsilon^2} \int_{\Omega} b (v_{\varepsilon}^\tau  - q_\varepsilon)_+^p q_\varepsilon^\tau
 + \frac{1}{\varepsilon^2} \int_{\Omega} b (v_{\varepsilon}^\tau  - q_\varepsilon)_+^{p + 1}.
\end{equation}
If \(\varepsilon \tau \lt \rho\), one has for every \(x \in \Omega\),
\[
 (v_{\varepsilon}^\tau (x) - q_\varepsilon (x))_+= \bigl(U \bigl( \tfrac{x-\Hat{x}}{\varepsilon} \bigr) + \log \tau\bigr)_+.
\]
Hence we have since \(b\) and \(q\) are continuous
\begin{equation}
\label{limNonlinear}
\begin{split}
 \lim_{\varepsilon \to 0} \frac{1}{\varepsilon^2}
\int_{\Omega} b (v_{\varepsilon}^\tau  - q_\varepsilon)_+^{p + 1}
& = \lim_{\varepsilon \to 0} \int_{B (0, \tau)} b (\Hat{x} + y)^{p + 1} q (\Hat{x} + y) \bigl(U (y) + \log \tau \bigr)_+^{p + 1}\du y\\
& = b (\Hat{x}) q (\Hat{x})^{p + 1} \int_{B (0, \tau)}  \bigl(U + \log \tau\bigr)_+^{p + 1}
\end{split}
\end{equation}
and similarly
\begin{equation}
\label{limSemiNonlinear}
\lim_{\varepsilon \to 0} \frac{1}{\log \tfrac{1}{\varepsilon} \varepsilon^2} \int_{\Omega} b (v_{\varepsilon}^\tau  - q_\varepsilon)_+^{p} q_\varepsilon
= b (\Hat{x}) q (\Hat{x})^{p + 1} \int_{B (0, \tau)}  \bigl(U + \log \tau\bigr)_+^{p};
\end{equation}
the convergences are uniform on compact subsets.

By \eqref{decompNonlinear}, \eqref{limNonlinear} and \eqref{limSemiNonlinear}, we have thus proved that for every \(\tau > 0\), \(\lim_{\varepsilon \to 0} g_\varepsilon (\tau) = g (\tau)\),
where 
\[
 g(\tau) = \frac{2\pi q(\Hat{x})^2}{b(\Hat{x})}   - b(\Hat{x}) q(\Hat{x})^{p + 1}  \int_{\R^2} \bigl(U + \log \tau\bigr)_+^p. 
\]
Choose now \(\underline{\tau} > 0\) and \(\Bar{\tau} > 0\) such that \(g (\underline{\tau}) > 0\) and \(g (\Bar{\tau}) > 0\). Then, for \(\varepsilon>0\) sufficiently small, 
\(g_\varepsilon (\underline{\tau}) < 0 < g_\varepsilon(\Bar{\tau})\) and there exists a \(\tau_{\varepsilon} \in (\underline{\tau}, \Bar{\tau})\) such that \(g_{\varepsilon}(\tau_{\varepsilon}) =0\). 

One has then \(v_{\varepsilon}^{t_\varepsilon} \in \nehari\)
We can now compute the energy of \(v_{\varepsilon}^{\tau_{\varepsilon}}\) with the help of \eqref{limQuadratic} and \eqref{limNonlinear}, keeping in mind that the limits are uniform on compact subsets and that the family \((\abs{\log \tau_\varepsilon})_{\varepsilon >0}\) is bounded:
\begin{align*}
 \lim_{\varepsilon \to 0} \frac{1}{\log \tfrac{1}{\varepsilon}} \mathcal{E}_{\varepsilon} (v_{\varepsilon}^{\tau_\varepsilon}) &= \lim_{\varepsilon \to 0} \frac{1}{\log \tfrac{1}{\varepsilon}} \frac{1}{2} \int_{\Omega} \frac{\abs{\nabla v_{\varepsilon}^{\tau_\varepsilon}}^2}{b} - \lim_{\varepsilon \to 0} \frac{1}{\log \tfrac{1}{\varepsilon}}\frac{1}{\varepsilon^2} \int_{\Omega} b \frac{(v_{\varepsilon}^{\tau_\varepsilon}- q_\varepsilon)^{p + 1}_+}{p + 1} \\
 & = \pi \frac{q(\Hat{x})^2}{b(\Hat{x})}.
\end{align*}
The result follows by taking the infimum over \(\Hat{x} \in \Omega\).
\end{proof}

\begin{proposition}
\label{propImprovedUpper}
Under the assumption of the previous proposition, if there exists \(\Hat{x} \in \Omega\) such that 
\[
  \frac{q (\Hat{x})^2}{b (\Hat{x})} = \inf_{\Omega} \frac{q^2}{b} 
\]
and \(\frac{q^2}{b}\) is Dini-continuous in a neighbourhood of \(\Hat{x}\), then
\[
 c_{\varepsilon}(\Omega) \lt \pi  \log \frac{1}{\varepsilon} \inf_\Omega \frac{q^2}{b} + O (1)
\]
as \(\varepsilon \to 0\).
\end{proposition}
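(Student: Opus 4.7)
The plan is to specialize the argument of Proposition \ref{ThUpperBound} by taking $\Hat{x} \in \Omega$ to be a point at which $q^2/b$ attains its infimum, and by tracking the error terms more carefully under the Dini hypothesis. I would use the same test function $v_\varepsilon^\tau$ and the same function $g_\varepsilon(\tau) = (\log \tfrac{1}{\varepsilon})^{-1}\dualprod{\mathcal{E}_\varepsilon'(v_\varepsilon^\tau)}{v_\varepsilon^\tau}$. Since $g_\varepsilon$ converges uniformly on compact subsets of $(0, \infty)$ to the function $g$ which changes sign, the intermediate value theorem yields, for each small $\varepsilon$, a parameter $\tau_\varepsilon$ lying in a fixed compact interval $[\underline{\tau}, \Bar{\tau}] \subset (0, \infty)$ such that $v_\varepsilon^{\tau_\varepsilon} \in \nehari$.

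Next I would revisit the quadratic energy. On the annulus $B(\Hat{x}, \rho) \setminus B(\Hat{x}, \varepsilon)$ the cutoff $\varphi$ equals $1$ and $U((x-\Hat{x})/\varepsilon) = \log(\varepsilon/\abs{x-\Hat{x}})$, so lemma~\ref{lemmaChangeWeight} together with the computation $v_\varepsilon^\tau/q = \log(\tau/\abs{x-\Hat{x}})$ yields $\bigabs{\nabla(v_\varepsilon^\tau/q)}^2 = 1/\abs{x-\Hat{x}}^2$ there. Since moreover $\int_{B(\Hat{x}, \rho) \setminus B(\Hat{x}, \varepsilon)} \abs{x - \Hat{x}}^{-2}\du x = 2\pi \log \tfrac{\rho}{\varepsilon}$, passing to polar coordinates gives
\[
\biggabs{\int_{B(\Hat{x},\rho) \setminus B(\Hat{x}, \varepsilon)} \frac{q^2}{b}\Bigabs{\nabla \Bigl(\frac{v_\varepsilon^\tau}{q}\Bigr)}^2 - 2\pi \frac{q(\Hat{x})^2}{b(\Hat{x})} \log \tfrac{\rho}{\varepsilon}} \lt 2\pi \int_0^\rho \frac{\omega(r)}{r}\du r,
\]
where $\omega(r) = \sup_{\abs{x - \Hat{x}} \lt r} \bigabs{q(x)^2/b(x) - q(\Hat{x})^2/b(\Hat{x})}$. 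Dini continuity of $q^2/b$ at $\Hat{x}$ means exactly that $\int_0^\rho \omega(r)/r \du r < \infty$, so the right-hand side is $O(1)$ independently of $\varepsilon$. The contributions to $\int_\Omega \abs{\nabla v_\varepsilon^{\tau_\varepsilon}}^2/b$ from $B(\Hat{x}, \varepsilon)$ and from the outer annulus $B(\Hat{x}, 2\rho) \setminus B(\Hat{x}, \rho)$ are already bounded uniformly in $\tau \in [\underline{\tau}, \Bar{\tau}]$ by the estimates \eqref{eqQuadraticOuter} and \eqref{eqQuadraticInner} of Proposition \ref{ThUpperBound}. This yields $\int_\Omega \abs{\nabla v_\varepsilon^{\tau_\varepsilon}}^2/b = 2\pi (q(\Hat{x})^2/b(\Hat{x})) \log \tfrac{1}{\varepsilon} + O(1)$.

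The nonlinear term $\frac{1}{(p+1)\varepsilon^2} \int_\Omega b(v_\varepsilon^{\tau_\varepsilon} - q_\varepsilon)_+^{p+1}$ is $O(1)$ because, by \eqref{limNonlinear}, it converges to the finite limit $\frac{1}{p+1} b(\Hat{x}) q(\Hat{x})^{p+1} \int_{B(0,\tau)}(U + \log \tau)_+^{p+1}$ uniformly in $\tau \in [\underline{\tau}, \Bar{\tau}]$. Combining the two estimates gives $\mathcal{E}_\varepsilon(v_\varepsilon^{\tau_\varepsilon}) \lt \pi (q(\Hat{x})^2/b(\Hat{x})) \log \tfrac{1}{\varepsilon} + O(1)$, and since $v_\varepsilon^{\tau_\varepsilon} \in \nehari$ we have $c_\varepsilon \lt \mathcal{E}_\varepsilon(v_\varepsilon^{\tau_\varepsilon})$, concluding the proof. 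The only genuinely new point compared to the previous proposition is the annular estimate above; once one notices the exact cancellation with $1/\abs{x-\Hat{x}}^2$, the remainder becomes the polar-coordinate integral $\int_\varepsilon^\rho \omega(r)/r\du r$, and the Dini hypothesis is precisely what is needed to bound it uniformly in $\varepsilon$.
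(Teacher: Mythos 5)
Your proposal is correct and follows essentially the same route as the paper, which simply says to redo the estimate of the middle-annulus term \eqref{eqQuadraticMiddle} under the Dini hypothesis; your polar-coordinate computation turning the error into \(2\pi\int_\varepsilon^\rho \omega(r)/r \du r = O(1)\) is exactly the intended filling-in of that step, and the remaining ingredients (uniform boundedness of \(\tau_\varepsilon\), the inner-ball and outer-annulus bounds, and the \(O(1)\) nonlinear term) are correctly carried over from Proposition~\ref{ThUpperBound}.
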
 

Recall that \(f : \Omega \to \R\) is Dini-continuous in a neighbourhood of \(\Hat{x}\) if there exists \(\delta > 0\) and a nondecreasing function \(\omega : [0, \delta) \to \R\) such that 
\[
  \int_0^\delta \frac{\omega (t)}{t} \du t < \infty 
\]
and for every \(x,y \in B (\Hat{x}, \delta)\),
\[
  \abs{f (x) - f(y)} \lt \omega (\abs{x - y}).
\]

Remark that in order to have the improved bound the infimum should be achieved \emph{in the interior} of \(\Omega\) and \(\frac{q^2}{b}\) should satisfy some \emph{improved continuity} assumption at the minimum point. This is the case if \(\frac{q^2}{b}\) is coercive and continuously differentiable. Also note that by the classical regularity theory of De Giorgi \citelist{\cite{DG57}\cite{GiTr01}*{Chapter 8}}, since \(b\) is locally bounded and bounded away from \(0\), \(q\) is locally Dini-continuous. The condition is thus that \(b\) should be locally Dini-continuous.

\begin{proof}[Sketch of the proof of proposition~\ref{propImprovedUpper}]
The proof goes as the proof of proposition~\ref{ThUpperBound}, except that when studying \(\mathcal{E}_{\varepsilon} (v_{\varepsilon}^{\tau_\varepsilon})\), we note that our assumption allows us, by estimating \eqref{eqQuadraticMiddle}, to obtain
\[
  \lim_{\varepsilon \to 0} \frac{1}{\log \tfrac{1}{\varepsilon}} \int_{\Omega}\frac{\abs{\nabla v_{\varepsilon}^\tau}^2}{b}
= 2 \pi \log \tfrac{1}{\varepsilon} \frac{q (\Hat{x})^2}{b (\Hat{x})}  + O (1),
\]
as \(\varepsilon \to 0\), uniformly in \(\tau > 0\) over compact sets
instead of \eqref{limQuadratic}.
\end{proof}

\subsection{Asymptotic behaviour and lower bound on the energy}\label{SeAsymptoticVortexCore}

We are now going to study the asymptotics of a family of groundstates.
Thus, we assume that for every \(\varepsilon >0\), problem \eqref{problemP} possesses a nontrivial solution \(u_{\varepsilon} \in H^1_0 (\Omega, b)\) such that \(\mathcal{E}_{\varepsilon} (u_{\varepsilon}) = c_{\varepsilon}\). 

We define the vortex core to be the set
\[
A_{\varepsilon} = \bigl\{ x \in \Omega : u_{\varepsilon}(x) > q_\varepsilon (x) \bigr\}.
\]
Note that as \(u_{\varepsilon}\) is continuous inside \(\Omega\) by classical regularity theory \cite{GiTr01}*{theorem 8.22},  \(A_{\varepsilon}\) is an open subset of \(\Omega\).

We first give some integral identities involving the vortex core:

\begin{lemma}
\label{lemmaIntegralIdentity}
If \(u_{\varepsilon}\) is a solution of \eqref{problemP} then 
\begin{gather}
\tag{a}  \label{eqIntIDExterior}
 \frac{1}{\varepsilon^2} \int_{\Omega} (u_{\varepsilon} - q_\varepsilon)_+^p q_\varepsilon = \int_{\Omega} \frac{\abs{\nabla u_{\varepsilon}}^2}{b}
- \int_{A_{\varepsilon}} \frac{\abs{\nabla (u_{\varepsilon} - q_\varepsilon)}^2}{b}\\
\tag{b}  \label{eqIntIDInterior}\frac{1}{\varepsilon^2} \int_{A_{\varepsilon}} (u_{\varepsilon} - q_\varepsilon)_+^{p + 1}
= \int_{A_{\varepsilon}} \abs{\nabla (u_{\varepsilon} - q_\varepsilon)}^2.
\end{gather}
\end{lemma}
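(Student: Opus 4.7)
The plan is to derive both identities by testing the weak form of \eqref{problemP} against judicious choices of test functions, exploiting two facts: that $(u_{\varepsilon} - q_\varepsilon)_+$ vanishes outside the vortex core $A_{\varepsilon}$, and that by assumption $(\mathcal{A}_3)$ the profile $q_\varepsilon = (\log \tfrac{1}{\varepsilon}) q$ itself satisfies $-\dive(\nabla q_\varepsilon / b) = 0$ weakly in $\Omega$.

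I would first establish \eqref{eqIntIDInterior}. Since $u_{\varepsilon} = 0$ on $\partial \Omega$ and $\inf_\Omega q > 0$, the function $(u_{\varepsilon} - q_\varepsilon)_+$ has support compactly contained in $\Omega$ and is therefore an admissible test function in $H^1_0 (\Omega, b)$. Substituting it into the weak form of \eqref{problemP} produces the right-hand side of \eqref{eqIntIDInterior}. On the left, using that $\nabla(u_{\varepsilon} - q_\varepsilon)_+ = \nabla u_{\varepsilon} - \nabla q_\varepsilon$ on $A_{\varepsilon}$ and zero elsewhere, I would write the resulting integral as $\int_{A_{\varepsilon}} \abs{\nabla(u_{\varepsilon} - q_\varepsilon)}^2/b$ plus a cross term of the form $\int \nabla q_\varepsilon \cdot \nabla (u_{\varepsilon} - q_\varepsilon)_+ / b$; testing the weighted harmonicity of $q_\varepsilon$ from $(\mathcal{A}_3)$ against $(u_{\varepsilon} - q_\varepsilon)_+$ shows that this cross term vanishes, which yields \eqref{eqIntIDInterior}.

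For \eqref{eqIntIDExterior} I would test \eqref{problemP} against $u_{\varepsilon}$ itself, decompose $u_{\varepsilon} = q_\varepsilon + (u_{\varepsilon} - q_\varepsilon)$ on $A_{\varepsilon}$ inside the nonlinear factor, and then replace the resulting $(p + 1)$-power contribution by the gradient expression provided by \eqref{eqIntIDInterior}. Rearranging the terms produces the identity \eqref{eqIntIDExterior}.

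The principal, though mild, technical point is justifying the use of $(u_{\varepsilon} - q_\varepsilon)_+$ as a test function in the equation $-\dive(\nabla q_\varepsilon/b) = 0$: since $q_\varepsilon$ itself need not lie in $H^1_0 (\Omega, b)$, one cannot simply appeal to density. However, the compact containment of $\supp(u_{\varepsilon} - q_\varepsilon)_+$ in $\Omega$, combined with the local $H^1$-regularity of $q$ granted by $(\mathcal{A}_3)$, legitimises the integration by parts — for instance by exhausting $\Omega$ by relatively compact subdomains and passing to the limit. Once this is in place, the rest of the argument reduces to routine manipulation of the Sobolev chain rule and the almost-everywhere vanishing of $\nabla(u_{\varepsilon} - q_\varepsilon)_+$ outside $A_{\varepsilon}$.
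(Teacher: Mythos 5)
Your proof is correct and follows essentially the paper's route: the paper obtains the two identities by testing the equation against \((u_{\varepsilon} - q_\varepsilon)_+\) and against \(\min(u_{\varepsilon}, q_\varepsilon)\), and since \(u_{\varepsilon} = \min(u_{\varepsilon}, q_\varepsilon) + (u_{\varepsilon} - q_\varepsilon)_+\), your choice of testing against \(u_{\varepsilon}\) and then subtracting the identity \eqref{eqIntIDInterior} is the same computation up to a linear combination. The key points you identify — the vanishing of the cross term via the weighted harmonicity of \(q\) in \((\mathcal{A}_3)\) and the admissibility of \((u_{\varepsilon} - q_\varepsilon)_+\) as a test function — are exactly what the paper's one-line proof relies on.
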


Such integral identities go back to Berger and Fraenkel \cite{BeFr74}*{lemma~5.A}.

\begin{proof}[Proof of lemma~\ref{lemmaIntegralIdentity}]
The proof goes by taking \((u_{\varepsilon} - q_\varepsilon)_+\) and \(\min (u_{\varepsilon}, q_\varepsilon)\) as test functions in the equation.
\end{proof}

We now study the properties of the vortex core.

\begin{lemma} 
\label{lemmaAeConnecte}
For every \(\varepsilon >0\), the set \(A_{\varepsilon}\) is connected and simply connected and 
\[
 \lim_{\varepsilon \to 0} \frac{\diam (A_{\varepsilon})}{\dist (A_{\varepsilon}, \partial \Omega)} = 0.
\]
\end{lemma}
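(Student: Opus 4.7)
The plan is to work with $w_{\varepsilon} := u_{\varepsilon} - q_{\varepsilon}$, which by $(\mathcal{A}_3)$ satisfies $-\dive(\nabla w_{\varepsilon}/b) = (b/\varepsilon^2)(w_{\varepsilon})_+^p$ weakly in $\Omega$, is positive on $A_{\varepsilon}$, nonpositive on $\Omega \setminus A_{\varepsilon}$, and vanishes on $\partial A_{\varepsilon}$. Standard De Giorgi--Nash--Moser regularity provides $u_{\varepsilon},w_{\varepsilon}\in C^{1,\alpha}_{\mathrm{loc}}(\Omega)$, so that the normal derivative of $w_{\varepsilon}$ is continuous across any sufficiently regular part of $\partial A_{\varepsilon}$.

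\textbf{Simple connectedness.} The plan is to combine assumption $(\mathcal{A}_4)$ with a Hopf-type argument. Since $\R^2 \setminus \Omega$ is unbounded and connected, it lies in the unbounded component of $\R^2 \setminus A_{\varepsilon}$; hence any other component $V$ of $\R^2 \setminus A_{\varepsilon}$ is bounded, contained in $\Omega$, and satisfies $\partial V \subset \partial A_{\varepsilon}$. On $V$ the function $w_{\varepsilon}$ is weighted-harmonic with $w_{\varepsilon}=0$ on $\partial V$, so the weak maximum principle forces $w_{\varepsilon} \equiv 0$ on $V$, whence $\partial_{\nu} w_{\varepsilon} = 0$ at $\partial V$ from the $V$-side. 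Approaching $\partial V$ from the $A_{\varepsilon}$-side however, $w_{\varepsilon} > 0$ solves an equation with nonnegative right-hand side and vanishes on $\partial A_{\varepsilon}$; Hopf's lemma then gives a strictly signed normal derivative, contradicting $C^1$-regularity.

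\textbf{Diameter versus distance.} Using the integral identities of the preceding lemma together with the orthogonality $\int_{A_{\varepsilon}} \nabla q_{\varepsilon} \cdot \nabla w_{\varepsilon}/b = 0$ (obtained by integrating by parts, since $-\dive(\nabla q_{\varepsilon}/b)=0$ and $w_{\varepsilon}|_{\partial A_{\varepsilon}}=0$), I would derive the decomposition
\[
c_{\varepsilon} = \tfrac{1}{2}\int_{\Omega\setminus A_{\varepsilon}}\frac{\abs{\nabla u_{\varepsilon}}^2}{b} + \tfrac{1}{2}\int_{A_{\varepsilon}}\frac{\abs{\nabla q_{\varepsilon}}^2}{b} + \Bigl(\tfrac{1}{2}-\tfrac{1}{p+1}\Bigr)\int_{A_{\varepsilon}}\frac{\abs{\nabla w_{\varepsilon}}^2}{b}.
\]
In particular $c_{\varepsilon} \gt \tfrac{1}{2}\int_{\Omega\setminus A_{\varepsilon}}\abs{\nabla u_{\varepsilon}}^2/b$. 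On $\Omega\setminus A_{\varepsilon}$, $u_{\varepsilon}$ is the weighted-harmonic extension with boundary values $q_{\varepsilon} \gt (\log\tfrac{1}{\varepsilon})\inf_{\Omega} q$ on $\partial A_{\varepsilon}$ and $0$ on $\partial \Omega$, so the variational characterization of the weighted capacity yields
\[
\int_{\Omega\setminus A_{\varepsilon}}\frac{\abs{\nabla u_{\varepsilon}}^2}{b} \gt (\log\tfrac{1}{\varepsilon})^2 (\inf_{\Omega} q)^2 \, \capa_b(A_{\varepsilon}, \Omega).
\]
Combining the classical two-dimensional logarithmic lower bound $\capa_b(A_{\varepsilon}, \Omega) \gt c/\log\bigl(\dist(A_{\varepsilon},\partial\Omega)/\diam A_{\varepsilon}\bigr)$ with the upper bound $c_{\varepsilon}=O(\log\tfrac{1}{\varepsilon})$ from proposition~\ref{ThUpperBound} forces $\log\bigl(\dist(A_{\varepsilon},\partial\Omega)/\diam A_{\varepsilon}\bigr) \to \infty$, which is the claimed ratio estimate.

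\textbf{Connectedness} is the main obstacle. I would argue by contradiction: suppose $A_{\varepsilon} = A_{\varepsilon}^{1} \sqcup A_{\varepsilon}^{2}$ with both pieces nonempty. Applying the capacity argument of the previous step component-by-component should yield two contributions to $\int_{\Omega\setminus A_{\varepsilon}}\abs{\nabla u_{\varepsilon}}^2/b$, each asymptotically of order $\pi(\log\tfrac{1}{\varepsilon})\inf_{\Omega}(q^2/b)$, so that their sum violates proposition~\ref{ThUpperBound}. Making the leading constants match is delicate; as a backup I would truncate the solution to a neighbourhood $U$ of a single component, setting $v = q_{\varepsilon} + (w_{\varepsilon})_+ \charfun{U}$, project $v$ back to the Nehari manifold $\nehari$, and verify the strict inequality $\sup_{t > 0} \mathcal{E}_{\varepsilon}(t v) < c_{\varepsilon}$, contradicting the variational characterization of $c_{\varepsilon}$ in lemma~\ref{lemmaCritical}.
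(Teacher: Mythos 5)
The main gap is in the connectedness step, which you yourself flag as the obstacle. Your primary plan --- that each component of \(A_{\varepsilon}\) contributes an energy of order \(\pi \log\tfrac{1}{\varepsilon}\, \inf_\Omega (q^2/b)\), so that two components would violate proposition~\ref{ThUpperBound} --- does not work: the capacity argument gives the \emph{same} global quantity \(\int_{\Omega}\abs{\nabla u_{\varepsilon}}^2/b\) as an upper bound for \emph{each} component's capacity, so the contributions do not add, and there is no a priori \emph{lower} bound of order \(2\pi/\log\tfrac{1}{\varepsilon}\) on the capacity of an individual component (that would require a lower bound on \(\diam A_{\varepsilon}^i\) per component, which is only proved later, for the connected \(A_{\varepsilon}\), using this very lemma; moreover two components lying close together carry essentially the capacity of one, not the sum). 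Your backup plan is in the spirit of the paper's actual argument, but the competitor \(v = q_{\varepsilon} + (w_{\varepsilon})_+\charfun{U}\) is inadmissible: \(q_{\varepsilon} = \log\tfrac{1}{\varepsilon}\,q\) with \(\inf_\Omega q>0\) is not in \(H^1_0(\Omega,b)\), and rescaling all of \(v\) by \(t\) perturbs the background as well as the bump. The paper instead takes \(\min(u_{\varepsilon},q_{\varepsilon}) + t\,(u_{\varepsilon}-q_{\varepsilon})_+\charfun{A_{\varepsilon}^*}\), scales \emph{only} the bump supported on a single component \(A_{\varepsilon}^*\), finds \(t_*\gt 1\) on the Nehari manifold by the intermediate value theorem, and checks that the energy drops by \(\bigl(\tfrac12-\tfrac{1}{p+1}\bigr)\int_{A_{\varepsilon}\setminus A_{\varepsilon}^*}(u_{\varepsilon}-q_{\varepsilon})^{p+1}\); it also needs a separate truncation lemma (lemma~\ref{lemmaTruncPos}), resting on the compactness of \(\overline{A_{\varepsilon}^*}\) in \(\Omega\) obtained from the capacity estimate applied to that component, to guarantee \((u_{\varepsilon}-q_{\varepsilon})_+\charfun{A_{\varepsilon}^*}\in H^1_0(\Omega,b)\) --- a measurability/Sobolev point you pass over.

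Two smaller issues. In the diameter estimate you invoke a ``classical'' logarithmic lower bound for the \emph{weighted} capacity \(\capa_b\); the bound of \cite{SmVS10}*{proposition~A.3} is for the unweighted capacity, and \(b\) is not globally bounded in the intended applications (\(b(x)=x_1^\alpha\) on the half-plane), so you should first pass to the unweighted capacity via lemma~\ref{lemmaChangeWeight}: \(\int_\Omega\abs{\nabla u_{\varepsilon}}^2/b=\int_\Omega (q_{\varepsilon}^2/b)\abs{\nabla(u_{\varepsilon}/q_{\varepsilon})}^2\gt\inf_\Omega(q_{\varepsilon}^2/b)\,\capa(A_{\varepsilon},\Omega)\), as the paper does; your orthogonal decomposition of \(c_{\varepsilon}\) is correct but unnecessary for this purpose. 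For simple connectedness, your Hopf argument requires an interior ball condition on the free boundary \(\partial A_{\varepsilon}\) from the \(A_{\varepsilon}\) side, which is not available a priori; the paper avoids this by applying the strong maximum principle to \(u_{\varepsilon}-q_{\varepsilon}\gt 0\) on the open set obtained by filling in the holes, concluding \(u_{\varepsilon}-q_{\varepsilon}>0\) there so that the holes are empty.
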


The proof of the connectedness will require the next techical lemma:

\begin{lemma}
\label{lemmaTruncPos}
Let \(u \in H^1_0 (\Omega, b)\).
If \(u \in C (\Omega)\), if \(U \subset \Omega\) is open,
\[
 \{x \in \Omega : u (x) > 0\} \setminus U
\]
is open and \(\Bar{U} \subset \Omega\) is compact, then 
 \(u_+ \charfun{U} \in H^1_0 (\Omega, b)\).
\end{lemma}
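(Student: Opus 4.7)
The plan is to identify the function \(u_+\charfun{U}\) with \(u\charfun{A}\), where \(A := U \cap \{u > 0\}\), and to approximate it in \(H^1_0(\Omega,b)\) by Lipschitz truncations of \(u\) multiplied by a suitable smooth cutoff. First I would set \(B := \{u > 0\} \setminus U\); by hypothesis \(B\) is open, so \(\{u > 0\} = A \sqcup B\) is a disjoint union of two open sets, and therefore \(u_+\charfun{U} = u\charfun{A}\). Its support lies in \(\overline{A}\subset\overline{U}\), hence is compact in \(\Omega\); moreover, any \(x\in\overline{A}\cap\overline{B}\) must satisfy \(u(x)=0\), because if \(u(x)>0\) then \(x\) would belong to the open set \(A\sqcup B = \{u>0\}\), contradicting the fact that \(x\) is simultaneously approached by points of the two disjoint open sets \(A\) and \(B\).

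Next I would introduce, for \(\delta > 0\), the Lipschitz truncation \(T_\delta(s) := (s-\delta)_+\) and the function \(w_\delta := T_\delta(u) \in H^1_0(\Omega,b)\), together with the open subsets
\[
A_\delta := A \cap \{u > \delta\}, \qquad B_\delta := B \cap \{u > \delta\}.
\]
The crucial point is that \(\overline{A_\delta}\) and \(\overline{B_\delta}\) are disjoint, since by continuity their intersection is contained in \(\{u\gt\delta\}\cap\overline{A}\cap\overline{B}\), which is empty by the preceding paragraph. Since \(\overline{A_\delta}\subset\overline{U}\) is compact in \(\Omega\) while \(\overline{B_\delta}\) is closed in \(\Omega\), one has \(\dist(\overline{A_\delta},\overline{B_\delta})>0\), and one can pick \(\eta_\delta \in C^\infty_c(\Omega)\) with \(0 \lt \eta_\delta \lt 1\), \(\eta_\delta \equiv 1\) on a neighbourhood of \(\overline{A_\delta}\), and \(\supp\eta_\delta \cap \overline{B_\delta} = \emptyset\).

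I would then define \(v_\delta := \eta_\delta w_\delta\) and check two facts. First, \(v_\delta \in H^1_0(\Omega,b)\): multiplying \(C^\infty_c(\Omega)\)-approximants of \(w_\delta\) by \(\eta_\delta\) and using the boundedness of \(\eta_\delta\) and \(\nabla\eta_\delta\) together with the fact that \(b\) is bounded away from \(0\) on the compact set \(\supp\eta_\delta\) yields convergence of the products in the norm of \(H^1_0(\Omega,b)\). Second, \(v_\delta = w_\delta\charfun{A}\) pointwise, since on \(\{u>\delta\} = A_\delta\sqcup B_\delta\) the cutoff \(\eta_\delta\) coincides with \(\charfun{A}\), while \(w_\delta\) vanishes outside that set. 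Passing to the limit \(\delta\to 0\), dominated convergence gives
\[
\int_\Omega \frac{\abs{\nabla v_\delta - \charfun{A}\nabla u}^2}{b} = \int_{A\cap\{0<u\lt\delta\}}\frac{\abs{\nabla u}^2}{b} \longrightarrow 0,
\]
so the sequence \((v_\delta)\) is Cauchy in \(H^1_0(\Omega,b)\) and converges there to \(u\charfun{A}=u_+\charfun{U}\), which therefore belongs to \(H^1_0(\Omega,b)\).

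The main obstacle is the topological separation of \(\overline{A_\delta}\) from \(\overline{B_\delta}\), which is exactly what the openness of \(\{u > 0\}\setminus U\) buys: without that assumption the characteristic function \(\charfun{U}\) would in general introduce a jump across \(\partial U \cap \{u > 0\}\) incompatible with first-order Sobolev regularity, and one would be unable to build the smooth cutoff \(\eta_\delta\) separating the two components of \(\{u>\delta\}\).
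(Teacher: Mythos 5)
Your argument is correct and follows essentially the same route as the paper's proof: truncate at level \(\delta\) so that the positive part restricted to \(U\) becomes compactly supported in \(\Omega\) (this is exactly where the openness of \(\{x \in \Omega : u(x) > 0\}\setminus U\) is used, as you correctly isolate), localize with a smooth cutoff that is \(1\) on that compact set and \(0\) on the part of \(\{u>0\}\) outside \(U\), and let \(\delta \to 0\) using dominated convergence of the gradients. The only cosmetic difference is that the paper writes the localized truncation as \((\varphi^\delta u - \delta)_+\) rather than \(\eta_\delta (u-\delta)_+\); both coincide with \((u-\delta)_+ \charfun{U}\).
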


Note that we are not assuming that \(u\) is continuous on \(\partial \Omega\); this makes the proof and the assumptions delicate but will relieve us later of studying the regularity of \(u\) near \(\partial \Omega\).

\begin{proof}[Proof of lemma~\ref{lemmaTruncPos}]
Let \(\delta > 0\) and define 
\[
  K^\delta = \{ x \in U : u (x) \gt \delta \}.
\]
By our assumptions on the function \(u\) and on the sets \(U\), the set \(K^\delta\) is compact.
Hence there exists \(\varphi^\delta \in C^\infty (\Omega; [0, 1])\) such that \(\varphi^\delta = 1\) on \(F_1^\delta\) and \(\varphi^\delta = 0\) on \(\supp u \setminus U\).
One has \( (u - \delta)_+ \charfun{U} = (\varphi^\delta u - \delta)_+ \in H^1_0 (\Omega, b)\).
Since for every \(\delta > 0\), 
\[
 \int_{U} \frac{\abs{\nabla (u - \delta)_+}^2}{b}
\lt \int_{U} \frac{\abs{\nabla u}^2}{b},
\]
we conclude by letting \(\delta \to 0\) that \(u_+ \charfun{U} \in H^1_0 (\Omega, b)\).
\end{proof}

For the connectedness, we rely on an argument that goes back to Berger and Fraenkel \cite{BeFr80}*{theorem 4.3} (see also \citelist{\cite{LiYaYa05}\cite{BeBr80}*{appendix}}). 

\begin{proof}[Proof of lemma~\ref{lemmaAeConnecte}]
Since \(u_{\varepsilon} \gt q_\varepsilon\) on \(A_\varepsilon\), we have by definition of capacity, by lemma~\ref{lemmaChangeWeight} and by lemma~\ref{lemmeGradientEnergie}
\[
\begin{split}
 \inf_\Omega \frac{q_\varepsilon^2}{b} \capa (A_\varepsilon, \Omega) 
  \lt   \int_{\Omega \setminus A_\varepsilon} \frac{q_\varepsilon^2}{b} \Bigabs{\nabla \Bigl(\frac{u_\varepsilon}{q_\varepsilon}\Bigr)}^2
  \lt \int_{\Omega} \frac{\abs{\nabla u_\varepsilon}^2}{b}
  \lt \frac{2 (p + 1)}{p - 1} \mathcal{E}_{\varepsilon} (u_{\varepsilon}).
\end{split}
\]
Let \(A_{\varepsilon}^*\) be a connected component of \(A_\varepsilon\).
Since \(\R^2 \setminus \Omega\) is connected and unbounded, by estimates on the capacity \cite{SmVS10}*{proposition~A.3} (see also \cite{Fr81}),
\[
 \capa (A_{\varepsilon}^*, \Omega) \gt \frac{2 \pi}{\log 16 (1 + \frac{2 \dist (A_\varepsilon, \partial \Omega)}{\diam A_{\varepsilon}^*})}.
\]
In particular \(\Bar{A_{\varepsilon}^*}\) is a compact subset of \(\Omega\) and by proposition~\ref{ThUpperBound},
\[
 \lim_{\varepsilon \to 0} \frac{\diam (A_\varepsilon^*)}{\dist(A_{\varepsilon}^*, \partial \Omega)}  = 0.
\]
It is thus sufficient to prove that
\(A_{\varepsilon}^* = A_{\varepsilon}\).

By lemma~\ref{lemmaTruncPos}, since \(u_{\varepsilon}\) is continuous and \(\Bar{A_{\varepsilon}^*}\) is a compact subset of \(\Omega\),
\begin{equation*}
v_{\varepsilon} = (u_{\varepsilon} - q_\varepsilon)_+ \charfun{A_{\varepsilon}^*} \in H^1_0 (\Omega, b).
\end{equation*}
Also define \(w_\varepsilon = \min (u_{\varepsilon}, q_\varepsilon)\).
By testing the equation against \((u_{\varepsilon} - q_\varepsilon)_+\) and \(v_{\varepsilon}\) we have
\begin{equation}
\label{eqtestAei}
 \int_{A_{\varepsilon}} \abs{\nabla (u_{\varepsilon} - q_\varepsilon)}^2
 = \int_{A_{\varepsilon}} (u_{\varepsilon} - q_\varepsilon)^{p + 1} 
\text{ and }
 \int_{A_{\varepsilon}^*} \abs{\nabla (u_{\varepsilon} - q_\varepsilon)}^2
 = \int_{A_{\varepsilon}^*} (u_{\varepsilon} - q_\varepsilon)^{p + 1}.
\end{equation}
Also note that 
\[
  \int_{\Omega} \abs{\nabla u_{\varepsilon}}^2 = \int_{\Omega} \abs{\nabla w_\varepsilon}^2 + \int_{A_{\varepsilon}} \abs{\nabla (u_{\varepsilon} - q_\varepsilon)}^2,
\]
and for every \(t \in \R\), 
\[
  \int_{\Omega} \abs{\nabla (w_\varepsilon + t v_{\varepsilon})}^2
=   \int_{\Omega} \abs{\nabla w_\varepsilon}^2 + t^2 \int_{A_{\varepsilon}^*} \abs{\nabla  (u_{\varepsilon} - q_\varepsilon)}^2.
\]

We first claim that there exists \(t_* \gt 1\) such that \(w_\varepsilon + t_* u_{\varepsilon} \in \nehari\).
Indeed, one has for every \(t \in \R\),
\[
\begin{split}
  \dualprod{ \mathcal{E}_{\varepsilon}' (w_{\varepsilon} + t v_{\varepsilon})}{ w_{\varepsilon} + t v_{\varepsilon}}
=  &\dualprod{ \mathcal{E}_{\varepsilon}' (w_{\varepsilon} + t v_{\varepsilon})}{ w_{\varepsilon} + t v_{\varepsilon}}
- \dualprod{ \mathcal{E}_{\varepsilon}' (u_{\varepsilon})}{ u_{\varepsilon} }\\
= & t^2 \int_{A_{\varepsilon}^*} \abs{\nabla (u_{\varepsilon} - q_\varepsilon)}^2 - \int_{A_{\varepsilon}} \abs{\nabla (u_{\varepsilon} - q_\varepsilon)}^2\\
&- \int_{A_{\varepsilon}^*} t^{p} (u_{\varepsilon} - q_\varepsilon)^p (q_\varepsilon + t(u_{\varepsilon} - q_\varepsilon))
+ \int_{A_{\varepsilon}}  (u_{\varepsilon} - q_\varepsilon)^p u_{\varepsilon}.
\end{split}
\]
By \eqref{eqtestAei}, we have
\begin{multline*}
  \dualprod{ \mathcal{E}_{\varepsilon}' (w_{\varepsilon} + t u_{\varepsilon}^1)}{ w_{\varepsilon} + t u_{\varepsilon}^1}
= \int_{A_{\varepsilon} \setminus A_{\varepsilon}^*}  (u_{\varepsilon} - q_\varepsilon)^p q_\varepsilon
 - (t^{p + 1} - t^2) \int_{A_{\varepsilon}^*}  (u_{\varepsilon} - q_\varepsilon)^{p + 1}\\
- (t^{p} - 1) \int_{A_{\varepsilon}^*}  (u_{\varepsilon} - q_\varepsilon)^p q_\varepsilon.
\end{multline*}
By the intermediate value theorem, there exists \(t_* \gt 1\) such that \(w_{\varepsilon} + t_* u_{\varepsilon} \in \nehari\).

Now we compute the energy and we obtain
\[
\begin{split}
 \mathcal{E}_{\varepsilon} (w_\varepsilon + t_* u_{\varepsilon})
&= \frac{1}{2} \int_{\Omega} \abs{\nabla w_\varepsilon}^2 + \Bigl(\frac{t_*^2}{2} - \frac{t_*^{p + 1}}{p + 1}\Bigr) \int_{A_{\varepsilon}^*} (u_{\varepsilon} - q_\varepsilon)^{p + 1}\\
&\lt \mathcal{E}_{\varepsilon} (u_{\varepsilon}) - \Bigl(\frac{1}{2} - \frac{1}{p + 1}\Bigr) \int_{A_{\varepsilon} \setminus A_{\varepsilon}^*} (u_{\varepsilon} - q_\varepsilon)^{p + 1}.
\end{split}
\]
Since \(u_{\varepsilon}\) is a minimal energy solution and \(u_{\varepsilon} > q_\varepsilon\) in \(A_{\varepsilon}\), we conclude that \(A_{\varepsilon} = A_{\varepsilon}^*\) and the set \(A_{\varepsilon}\) is thus connected. 

We now show that \(A_{\varepsilon}\) is simply connected. Let \(E\) be the connected component of \(\Omega \setminus A_{\varepsilon}\) such that \(\partial \Omega \subset \Bar{E}\). The set \(\Omega \setminus E\) is open and one has  \(- \dive (\frac{u_{\varepsilon}-q_\varepsilon}{b}) \gt 0\) in \(\Omega \setminus E\) and \(u_{\varepsilon}-q_\varepsilon \gt 0\) in \(\Omega \setminus E\), so that by the strong maximum principle, \(u_{\varepsilon} - q_\varepsilon > 0\) in \(\Omega \setminus E\). Hence \(A_{\varepsilon} = \Omega \setminus E\) and \(A_{\varepsilon}\) is simply connected.
\end{proof}

The next lemma shows that the kinetic energy remains bounded inside the vortex core.

\begin{proposition}\label{ThBornesGrandeursPhysiques}
There exists a constant \(C>0\) independent of \(\varepsilon\) such that if \(a_{\varepsilon} \in A_{\varepsilon}\),
\begin{equation*}
\frac{1}{\varepsilon^2} \int_{A_{\varepsilon}} b (u_\varepsilon - q_\varepsilon)_+^{p + 1} = \int_{A_{\varepsilon}} \frac{\abs{\nabla (u_{\varepsilon} - q_\varepsilon)}^2}{b}  \lt C \frac{b (a_{\varepsilon})}{q (a_{\varepsilon})^2}.
\end{equation*}
\end{proposition}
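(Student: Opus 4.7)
The equality is immediate from Lemma~\ref{lemmaIntegralIdentity}(b), obtained by testing the equation with $(u_{\varepsilon} - q_\varepsilon)_+ \in H^1_0(\Omega, b)$ (admissibility coming from Lemma~\ref{lemmaTruncPos}).

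For the upper bound, my starting point is an orthogonal energy decomposition. Set $w_\varepsilon = \min(u_{\varepsilon}, q_\varepsilon)$ and $v_\varepsilon = (u_{\varepsilon} - q_\varepsilon)_+$; both lie in $H^1_0(\Omega, b)$. Since $v_\varepsilon$ vanishes on $\partial A_{\varepsilon}$ and $q_\varepsilon$ is $b$-harmonic by $(\mathcal{A}_3)$, the cross term $\int_\Omega \nabla q_\varepsilon \cdot \nabla v_\varepsilon/b$ vanishes after integration by parts, giving
\[
\int_\Omega \frac{|\nabla u_{\varepsilon}|^2}{b} = \int_\Omega \frac{|\nabla w_\varepsilon|^2}{b} + \int_{A_{\varepsilon}} \frac{|\nabla v_\varepsilon|^2}{b}.
\]
Combined with the equality of the proposition to eliminate the nonlinear term in $\mathcal{E}_{\varepsilon}(u_\varepsilon) = c_\varepsilon$, this yields
\[
c_{\varepsilon} = \tfrac{1}{2} \int_\Omega \tfrac{|\nabla w_\varepsilon|^2}{b} + \tfrac{p-1}{2(p+1)} \int_{A_{\varepsilon}} \tfrac{|\nabla v_\varepsilon|^2}{b}.
\]

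It thus suffices to upper-bound $c_{\varepsilon} - \tfrac{1}{2}\int |\nabla w_\varepsilon|^2/b$ by $C\, b(a_{\varepsilon})/q(a_{\varepsilon})^2$. I construct a Nehari competitor $\phi_\varepsilon = t_\varepsilon(w_\varepsilon + V_\varepsilon)$ where $V_\varepsilon(x) = \tfrac{b(a_{\varepsilon})}{q(a_{\varepsilon})} V\bigl(\tfrac{x-a_{\varepsilon}}{\beta_\varepsilon}\bigr)$ with $\beta_\varepsilon = \varepsilon\, q(a_{\varepsilon})^{(p-1)/2}/b(a_{\varepsilon})^{(p+1)/2}$ and $V$ a fixed Nehari ground state of $-\Delta V = V^p$ on the unit ball with zero boundary data. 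This particular scaling is precisely the one under which $V_\varepsilon$ approximately satisfies the interior equation $-\dive(\nabla V_\varepsilon/b) = b V_\varepsilon^p/\varepsilon^2$. A change of variables, using $(\mathcal{A}_1)$ to treat $b$ and $q$ as essentially constant on $B(a_{\varepsilon}, \beta_\varepsilon)$, gives
\[
\int_\Omega \tfrac{|\nabla V_\varepsilon|^2}{b} = \bigl(1+o(1)\bigr)\tfrac{b(a_{\varepsilon})}{q(a_{\varepsilon})^2} \int |\nabla V|^2, \qquad \tfrac{1}{\varepsilon^2} \int b V_\varepsilon^{p+1} = \bigl(1+o(1)\bigr)\tfrac{b(a_{\varepsilon})}{q(a_{\varepsilon})^2} \int V^{p+1},
\]
and assumption~$(\mathcal{A}_2)$ ensures $\beta_\varepsilon \ll \dist(a_{\varepsilon}, \partial \Omega)$, so that $V_\varepsilon \in H^1_0(\Omega, b)$. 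Choosing $t_\varepsilon \to 1$ to enforce $\phi_\varepsilon \in \nehari$, and using that the cross term $\int \nabla w_\varepsilon \cdot \nabla V_\varepsilon/b$ vanishes (by the $b$-harmonicity of $w_\varepsilon = q_\varepsilon$ on the support of $V_\varepsilon$), one arrives at
\[
\mathcal{E}_{\varepsilon}(\phi_\varepsilon) = \tfrac{1}{2}\int_\Omega \tfrac{|\nabla w_\varepsilon|^2}{b} + C\, \tfrac{b(a_{\varepsilon})}{q(a_{\varepsilon})^2} + o(1).
\]
Since $c_{\varepsilon} \lt \mathcal{E}_{\varepsilon}(\phi_\varepsilon)$ by minimality on the Nehari manifold, subtracting the energy identity yields the claim.

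The main technical obstacle lies in handling the nonlinear term $\int b(t_\varepsilon \phi_\varepsilon - q_\varepsilon)^{p+1}_+$: on $B(a_{\varepsilon}, \beta_\varepsilon)$ one has $t_\varepsilon \phi_\varepsilon - q_\varepsilon = (t_\varepsilon - 1)q_\varepsilon + t_\varepsilon V_\varepsilon$, and because $q_\varepsilon = \log \tfrac{1}{\varepsilon}\, q$ carries the large factor $\log \tfrac{1}{\varepsilon}$, even $t_\varepsilon - 1 = o(1)$ need not make $(t_\varepsilon - 1)q_\varepsilon$ negligible against $V_\varepsilon$. Pinning down $t_\varepsilon$ sharply through the Nehari constraint, together with arranging that the support of $V_\varepsilon$ lies in the region where $w_\varepsilon = q_\varepsilon$ (which is needed to annihilate the cross terms exactly), is the delicate point of the argument.
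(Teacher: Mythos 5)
Your treatment of the equality and of the orthogonal decomposition $\int_\Omega \abs{\nabla u_\varepsilon}^2/b = \int_\Omega \abs{\nabla w_\varepsilon}^2/b + \int_{A_\varepsilon}\abs{\nabla (u_\varepsilon-q_\varepsilon)}^2/b$ is correct, and the reduction to bounding $c_\varepsilon - \tfrac12\int\abs{\nabla w_\varepsilon}^2/b$ is a legitimate reformulation. But the competitor argument has a genuine gap exactly at the point you flag as ``delicate,'' and I do not see how to close it. The function $w_\varepsilon + V_\varepsilon$ is far from the Nehari manifold at the leading order $\log\tfrac1\varepsilon$: in $\dualprod{\mathcal{E}_\varepsilon'(w_\varepsilon+V_\varepsilon)}{w_\varepsilon+V_\varepsilon}$ the quadratic part contributes $\int\abs{\nabla w_\varepsilon}^2/b$ (which is $O(\log\tfrac1\varepsilon)$ but otherwise unknown at this stage), while the nonlinear pairing contributes $\tfrac{1}{\varepsilon^2}\int b\,V_\varepsilon^p\,q_\varepsilon \sim \log\tfrac1\varepsilon\int V^p$ with your normalization of $V$; there is no reason these two $\log\tfrac1\varepsilon$--sized quantities match, so $t_\varepsilon$ need not tend to $1$, and even if it did you would need $\abs{t_\varepsilon-1}\cdot\int\abs{\nabla w_\varepsilon}^2/b = O\bigl(b(a_\varepsilon)/q(a_\varepsilon)^2\bigr)$, i.e.\ $\abs{t_\varepsilon - 1} = O(1/\log\tfrac1\varepsilon)$ with a specific constant --- nothing in the proposal produces this. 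Two further points: the inclusion $\supp V_\varepsilon \subset \{w_\varepsilon = q_\varepsilon\} = \overline{A_\varepsilon}$, which you need to kill the cross term and to identify $(\phi_\varepsilon-q_\varepsilon)_+$ on $\supp V_\varepsilon$, is not available ($a_\varepsilon$ is an arbitrary point of $A_\varepsilon$ and no inner-ball estimate for $A_\varepsilon$ exists; even the lower bound on $\diam A_\varepsilon$, lemma~\ref{ThBorneInfDiametreAe}, is deduced \emph{from} this proposition). And $(\mathcal{A}_2)$ goes the wrong way for admissibility: it gives an \emph{upper} bound on $\dist(a_\varepsilon,\partial\Omega)\,b(a_\varepsilon)^{(p+1)/2}/q(a_\varepsilon)^{(p-1)/2}$, hence does not guarantee $B(a_\varepsilon,\beta_\varepsilon)\subset\Omega$.

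The paper's proof is entirely different and avoids any test-function construction. Starting from the equality, one uses $(\mathcal{A}_1)$ together with lemma~\ref{lemmaAeConnecte} to freeze $b$ and $q$ at $a_\varepsilon$ on $A_\varepsilon$, and then the two-dimensional Gagliardo--Nirenberg inequality
\[
\int_{A_\varepsilon}(u_\varepsilon-q_\varepsilon)_+^{p+1} \lt C \int_{A_\varepsilon}(u_\varepsilon-q_\varepsilon)_+^{p}\,\Bigl(\int_{A_\varepsilon}\abs{\nabla(u_\varepsilon-q_\varepsilon)}^2\Bigr)^{1/2},
\]
which after dividing by $\bigl(\int_{A_\varepsilon}\abs{\nabla(u_\varepsilon-q_\varepsilon)}^2/b\bigr)^{1/2}$ yields
\[
\int_{A_\varepsilon}\frac{\abs{\nabla(u_\varepsilon-q_\varepsilon)}^2}{b} \lt C\, b(a_\varepsilon)\Bigl(\frac{1}{\varepsilon^2}\int_{A_\varepsilon} b\,(u_\varepsilon-q_\varepsilon)_+^{p}\Bigr)^2 .
\]
The remaining factor is controlled by identity \eqref{eqIntIDExterior} of lemma~\ref{lemmaIntegralIdentity}, lemma~\ref{lemmeGradientEnergie} and the upper bound of proposition~\ref{ThUpperBound}: $q(a_\varepsilon)\,\tfrac{1}{\varepsilon^2}\int_{A_\varepsilon}b(u_\varepsilon-q_\varepsilon)_+^p \lt C(\log\tfrac1\varepsilon)^{-1}\int_\Omega\abs{\nabla u_\varepsilon}^2/b \lt C'$, whence $\tfrac{1}{\varepsilon^2}\int_{A_\varepsilon}b(u_\varepsilon-q_\varepsilon)_+^p\lt C'/q(a_\varepsilon)$ and the claimed bound follows. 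I would recommend abandoning the competitor construction in favour of this bootstrap, which uses only identities you have already established.
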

\begin{proof}
Let \(a_{\varepsilon} \in A_{\varepsilon}\). By lemma~\ref{lemmaIntegralIdentity} \eqref{eqIntIDInterior}, lemma~\ref{lemmaAeConnecte} and \((\mathcal{A}_1)\), one has 
\begin{align*}
\int_{A_{\varepsilon}} \frac{\abs{\nabla (u_{\varepsilon} - q_\varepsilon)}^2}{b} &  = \frac{1}{\varepsilon^2} \int_{A_{\varepsilon}}  b (u_{\varepsilon} - q_\varepsilon)_+^{p+1} \\
& \lt C b(a_{\varepsilon}) \frac{1}{\varepsilon^2} \int_{A_{\varepsilon}}  (u_{\varepsilon} - q_\varepsilon)_+^{p+1} \\
& \lt C' b(a_{\varepsilon}) \frac{1}{\varepsilon^2} \int_{A_{\varepsilon}} (u_{\varepsilon} - q_\varepsilon)_+^{p} \ \Bigl( \int_{A_{\varepsilon}} \abs{\nabla (u_{\varepsilon} - q_\varepsilon)}^2 \Bigr)^{1/2} \\
& \lt C''  b(a_{\varepsilon})^{1/2}  \frac{1}{\varepsilon^2} \int_{A_{\varepsilon}}  b (u_{\varepsilon} - q_\varepsilon)_+^{p} \ \Bigl( \int_{A_{\varepsilon}}  \frac{\abs{\nabla (u_{\varepsilon} - q_\varepsilon)}^2}{b} \Bigr)^{\frac{1}{2}},
\end{align*}
using the the classical Gagliardo-Nirenberg inequality. One obtains thus
\begin{equation*}
 \int_{A_{\varepsilon}} \frac{\abs{\nabla (u_{\varepsilon} - q_\varepsilon)}^2}{b} \lt (C'')^2 b(a_{\varepsilon}) \left( \frac{1}{\varepsilon^2}  \int_{A_{\varepsilon}}  b (u_{\varepsilon} - q_\varepsilon)_+^{p} \right)^2.
\end{equation*}
Now by lemma~\ref{lemmaAeConnecte} and  by lemma~\ref{lemmaIntegralIdentity} \eqref{eqIntIDExterior},
\[
 q (a_{\varepsilon}) \frac{1}{\varepsilon^2}  \int_{A_{\varepsilon}}  b (u_{\varepsilon} - q_\varepsilon)_+^{p}
 \lt C''' \frac{1}{\varepsilon^2}  \int_{A_{\varepsilon}}  b (u_{\varepsilon} - q_\varepsilon)_+^{p} q
 \lt C''' \frac{1}{\log \tfrac{1}{\varepsilon}} \int_{\Omega} \frac{\abs{\nabla u_{\varepsilon}}^2}{b},
\]
and we conclude by lemma~\ref{lemmeGradientEnergie} and proposition~\ref{ThUpperBound}.
\end{proof}

Finally, we have a lower bound on the diameter of the vortex core:

\begin{lemma}\label{ThBorneInfDiametreAe}
There exists a constant \(C>0\) such that if \(a_{\varepsilon} \in A_{\varepsilon}\),
\begin{equation*}
\diam(A_{\varepsilon}) \gt \frac{C \varepsilon q (a_{\varepsilon})^{\frac{p - 1}{2}}}{b (a_{\varepsilon})^{\frac{p + 1}{2}}}.
\end{equation*}
\end{lemma}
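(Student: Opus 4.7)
The plan is to combine the integral identity of lemma~\ref{lemmaIntegralIdentity}\eqref{eqIntIDInterior} with a two-dimensional Sobolev inequality on $A_\varepsilon$, then invert the estimate via the bound already obtained in proposition~\ref{ThBornesGrandeursPhysiques}, and finally turn the resulting measure bound into a diameter bound. Throughout I write $v_\varepsilon = (u_\varepsilon - q_\varepsilon)_+$, which is supported in the open, connected set $A_\varepsilon$.

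First I would use lemma~\ref{lemmaAeConnecte}, which gives $\diam A_\varepsilon \ll \dist(A_\varepsilon,\partial\Omega)$, together with assumption $(\mathcal{A}_1)$ to deduce that, for $\varepsilon$ small, the oscillation of $\log b$ and $\log q$ on $A_\varepsilon$ is bounded; hence there exist absolute constants such that $b(x)\asymp b(a_\varepsilon)$ and $q(x)\asymp q(a_\varepsilon)$ for every $x\in A_\varepsilon$. The identity in lemma~\ref{lemmaIntegralIdentity}\eqref{eqIntIDInterior} can therefore be rewritten as
\[
  \frac{b(a_\varepsilon)}{\varepsilon^2}\int_{A_\varepsilon} v_\varepsilon^{p+1}
  \lt C\, \frac{1}{b(a_\varepsilon)}\int_{A_\varepsilon}\abs{\nabla v_\varepsilon}^2.
\]

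Next, since $v_\varepsilon \in H^1_0(A_\varepsilon)$ with $A_\varepsilon \subset \R^2$ bounded and connected, I would invoke the two-dimensional Gagliardo--Nirenberg--Sobolev inequality together with Faber--Krahn (or equivalently Poincar\'e's inequality on a set of finite measure): for every $v\in H^1_0(A_\varepsilon)$,
\[
  \int_{A_\varepsilon} v^{p+1} \lt C \,\abs{A_\varepsilon}\,
  \Bigl(\int_{A_\varepsilon} \abs{\nabla v}^2\Bigr)^{\!(p+1)/2}.
\]
Plugging this into the previous display and dividing by $\int_{A_\varepsilon}\abs{\nabla v_\varepsilon}^2$ (which is positive since $v_\varepsilon\not\equiv 0$) yields
\[
  \frac{\varepsilon^2}{b(a_\varepsilon)^2} \lt C \,\abs{A_\varepsilon}\,
  \Bigl(\int_{A_\varepsilon}\abs{\nabla v_\varepsilon}^2\Bigr)^{\!(p-1)/2}.
\]

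Then I would apply proposition~\ref{ThBornesGrandeursPhysiques} to bound the Dirichlet integral from above. Because $b\asymp b(a_\varepsilon)$ on $A_\varepsilon$, that proposition gives $\int_{A_\varepsilon}\abs{\nabla v_\varepsilon}^2 \lt C\, b(a_\varepsilon)^2/q(a_\varepsilon)^2$, hence
\[
  \abs{A_\varepsilon} \gt C\,\frac{\varepsilon^2\, q(a_\varepsilon)^{p-1}}{b(a_\varepsilon)^{p+1}}.
\]
Finally, since $A_\varepsilon \subset \R^2$ is contained in a disc of radius $\diam A_\varepsilon$, one has $\abs{A_\varepsilon}\lt \pi\,(\diam A_\varepsilon)^2$, so taking square roots gives exactly the asserted lower bound on $\diam A_\varepsilon$.

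The only real subtlety is the second step: one must be sure that the Sobolev constant on $A_\varepsilon$ does not depend on $A_\varepsilon$ in the form written above. This follows cleanly from the combination of $H^1_0(A_\varepsilon)\hookrightarrow L^{2(p+1)}(A_\varepsilon)$ (with a constant depending only on $p$, after an extension by zero to a fixed large ball and a scaling-free Sobolev inequality in $\R^2$) with a H\"older interpolation using the Lebesgue measure $\abs{A_\varepsilon}$. Everything else is routine bookkeeping of the exponents produced by the assumption $(\mathcal{A}_1)$ and by the bound of proposition~\ref{ThBornesGrandeursPhysiques}.
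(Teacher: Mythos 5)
Your argument is correct and follows essentially the same route as the paper: comparability of \(b\) and \(q\) on \(A_\varepsilon\) via lemma~\ref{lemmaAeConnecte} and \((\mathcal{A}_1)\), the identity of lemma~\ref{lemmaIntegralIdentity}, the H\"older--Sobolev bound \(\int_{A_\varepsilon} v^{p+1}\lt C\abs{A_\varepsilon}\bigl(\int_{A_\varepsilon}\abs{\nabla v}^2\bigr)^{(p+1)/2}\), division by the Dirichlet integral, proposition~\ref{ThBornesGrandeursPhysiques}, and the isodiametric inequality. One cosmetic point: the inequality you display after invoking lemma~\ref{lemmaIntegralIdentity} points the wrong way for the use you then make of it --- the subsequent division actually requires \(\frac{1}{b(a_\varepsilon)}\int_{A_\varepsilon}\abs{\nabla v_\varepsilon}^2 \lt C\,\frac{b(a_\varepsilon)}{\varepsilon^2}\int_{A_\varepsilon}v_\varepsilon^{p+1}\), which holds just as well since the source is an equality and \(b\) is two-sidedly comparable to \(b(a_\varepsilon)\) on \(A_\varepsilon\).
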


\begin{proof}
One has, by lemma~\ref{lemmaAeConnecte} and  and \((\mathcal{A}_1)\),
\begin{equation*}
\frac{1}{\varepsilon^2} \int_{A_{\varepsilon}}  b (u_{\varepsilon} - q_\varepsilon)_+^{p+1}
\lt C b(a_{\varepsilon}) \frac{1}{\varepsilon^2} \int_{A_{\varepsilon}}  (u_{\varepsilon} - q_\varepsilon)_+^{p+1} . 
\end{equation*}
By the Hölder and Sobolev inequalities
\begin{equation*}
 \int_{A_{\varepsilon}}  (u_{\varepsilon} - q_\varepsilon)_+^{p+1} \lt C' \abs{A_{\varepsilon}} \Bigl( \int_{A_{\varepsilon}} \abs{\nabla (u_{\varepsilon} - q_\varepsilon)}^2 \Bigr)^{(p+1)/2}. 
\end{equation*}
Hence we obtain, by lemma~\ref{lemmeGradientEnergie} and lemma~\ref{lemmaAeConnecte} together with \((\mathcal{A}_1)\) again,
\begin{equation*}
 \int_{A_{\varepsilon}} \frac{\abs{\nabla (u_{\varepsilon} - q_\varepsilon)}^2}{b} \lt C'' b(a_{\varepsilon})^{(p + 3)/2} \frac{\abs{A_{\varepsilon}}}{\varepsilon^2}  \Bigl( \int_{A_{\varepsilon}} \frac{\abs{\nabla (u_{\varepsilon} - q_\varepsilon)}^2}{b} \Bigr)^{(p+1)/2}. 
\end{equation*}
Therefore,
\[
  \liminf_{\varepsilon \to 0} b(a_{\varepsilon})^\frac{p + 3}{2} \frac{\abs{A_{\varepsilon}}}{\varepsilon^2} \Bigl(\int_{A_{\varepsilon}} \frac{\abs{\nabla (u_{\varepsilon} - q_\varepsilon)}^2}{b}\Bigr)^\frac{p - 1}{2} > 0.
\]
By proposition~\ref{ThBornesGrandeursPhysiques}, this implies that 
\[
   \liminf_{\varepsilon \to 0} \frac{\abs{A_{\varepsilon}}}{\varepsilon^2} \frac{b (a_{\varepsilon})^{p + 1}}{q (a_{\varepsilon})^{p - 1}} > 0. 
\]
and the result follows from the isodiametric inequality \(\abs{A_{\varepsilon}} \lt \pi (\diam A_{\varepsilon})^2/4\).
\end{proof}

The main result of this section is:

\begin{proposition}
\label{propositionAsymptotics}
One has, if \(a_{\varepsilon} \in A_{\varepsilon}\),
\begin{gather}
\label{EqXeConvergeVersArgMin}
\tag{a} \lim_{\varepsilon \to 0} \frac{\mathcal{E}_{\varepsilon}(u_{\varepsilon})}{\pi \log \tfrac{1}{\varepsilon}} 
= \lim_{\varepsilon \to 0} \frac{1}{2 \pi \log \tfrac{1}{\varepsilon}} \int_{\Omega} \frac{\abs{\nabla u_{\varepsilon} }^2 }{b} 
= \lim_{\varepsilon \to 0} \frac{q(a_{\varepsilon})^2}{b(a_{\varepsilon})} 
= \inf_{\Omega}  \frac{q^2}{b},\\
\tag{b} \label{EqKappaBorneInf}
  \lim_{\varepsilon \to 0} \kappa_{\varepsilon} \frac{b (a_{\varepsilon})}{q (a_{\varepsilon})} = 2\pi,\\
\label{EqBornesInfSupDiametre}
\tag{c} 
\lim_{\varepsilon \to 0} 
    \frac{\log \dfrac{\dist (A_{\varepsilon}, \partial \Omega)}{\diam (A_{\varepsilon})}}
         {\log \dfrac{1}{\varepsilon}}
= \lim_{\varepsilon \to 0} 
   \frac{\log \dfrac{b(a_{\varepsilon})^{(p + 1)/2}}{\diam (A_{\varepsilon}) q (a_{\varepsilon})^{(p - 1)/2}}}
        {\log \dfrac{1}{\varepsilon}} =  1. 
\end{gather}
\end{proposition}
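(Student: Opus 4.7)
The three assertions will be tied together by sharp matching bounds on the Dirichlet integral $\int_{\Omega}\abs{\nabla u_{\varepsilon}}^2/b$. By lemma~\ref{lemmaIntegralIdentity} \eqref{eqIntIDInterior} and proposition~\ref{ThBornesGrandeursPhysiques}, the nonlinear part of the energy is $O(1)$, so that
\[
  \mathcal{E}_{\varepsilon}(u_{\varepsilon})= \tfrac{1}{2}\int_{\Omega}\tfrac{\abs{\nabla u_{\varepsilon}}^2}{b}+O(1);
\]
proposition~\ref{ThUpperBound} thus provides the upper bound
\(\int_{\Omega}\abs{\nabla u_{\varepsilon}}^2/b\lt 2\pi\log\tfrac{1}{\varepsilon}\inf_{\Omega}(q^2/b)+o(\log\tfrac{1}{\varepsilon})\).
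The plan is to prove the matching localised lower bound
\[
  \int_{\Omega}\tfrac{\abs{\nabla u_{\varepsilon}}^2}{b}\gt 2\pi\log\tfrac{1}{\varepsilon}\tfrac{q(a_{\varepsilon})^2}{b(a_{\varepsilon})}(1-o(1));
\]
since $q(a_{\varepsilon})^2/b(a_{\varepsilon})\gt \inf_{\Omega}(q^2/b)$ trivially, the two bounds will then force equality and identify the three limits in (a).

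The localised lower bound is obtained by a capacity argument. By lemma~\ref{lemmaChangeWeight} together with $-\dive(\nabla q_{\varepsilon}/b)=0$, the function $v_{\varepsilon}=u_{\varepsilon}/q_{\varepsilon}$ is the $(q_{\varepsilon}^2/b)$-capacitary potential of $A_{\varepsilon}$ in $\Omega$, so
\[
  \int_{\Omega}\tfrac{\abs{\nabla u_{\varepsilon}}^2}{b}=\int_{\Omega\setminus A_{\varepsilon}}\tfrac{q_{\varepsilon}^2}{b}\abs{\nabla v_{\varepsilon}}^2=\capa^{q_{\varepsilon}^2/b}(A_{\varepsilon},\Omega).
\]
By lemma~\ref{lemmaAeConnecte} one may pick a radius $R_{\varepsilon}$ satisfying $\diam A_{\varepsilon}\ll R_{\varepsilon}\ll \dist(A_{\varepsilon},\partial\Omega)$ and $\sup_{\partial B(a_{\varepsilon},R_{\varepsilon})}v_{\varepsilon}=o(1)$: an explicit choice such as $R_{\varepsilon}=\dist(A_{\varepsilon},\partial\Omega)/\sqrt{\log(\dist(A_{\varepsilon},\partial\Omega)/\diam A_{\varepsilon})}$ works, by comparison with the explicit capacitary potential of the annulus $B(a_{\varepsilon},\dist(A_{\varepsilon},\partial\Omega))\setminus B(a_{\varepsilon},\diam A_{\varepsilon})$ via the maximum principle. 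On $B(a_{\varepsilon},R_{\varepsilon})$, assumption $(\mathcal{A}_1)$ gives $q^2/b\gt(1-o(1))q(a_{\varepsilon})^2/b(a_{\varepsilon})$, and the capacity estimate $\capa(A_{\varepsilon},B(a_{\varepsilon},R_{\varepsilon}))\gt 2\pi/\log(CR_{\varepsilon}/\diam A_{\varepsilon})$ from the proof of lemma~\ref{lemmaAeConnecte} applies in the ball. Combining these three ingredients produces the claimed lower bound in the form
\[
  \int_{\Omega}\tfrac{\abs{\nabla u_{\varepsilon}}^2}{b}\gt (1-o(1))\tfrac{q(a_{\varepsilon})^2}{b(a_{\varepsilon})}\tfrac{2\pi(\log\tfrac{1}{\varepsilon})^2}{\log(CR_{\varepsilon}/\diam A_{\varepsilon})}.
\]
The main technical difficulty lies precisely in controlling $\sup_{\partial B(a_{\varepsilon},R_{\varepsilon})}v_{\varepsilon}$, which ensures that no stray correction factor appears. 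Confronting this lower bound with the upper one yields simultaneously assertion (a) and the lower bound $\log(R_{\varepsilon}/\diam A_{\varepsilon})\gt \log\tfrac{1}{\varepsilon}(1-o(1))$, which is the lower direction of the second equality in (c).

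The upper direction of that equality is immediate from lemma~\ref{ThBorneInfDiametreAe}, and once (a) is known the boundedness of $\dist(A_{\varepsilon},\partial\Omega)$ needed for the first equality in (c) follows from assumption $(\mathcal{A}_2)$ applied at $a_{\varepsilon}$. Finally, for the circulation (b), substitute into the integral identity \eqref{eqIntIDExterior} and use the uniform closeness of $q$ to $q(a_{\varepsilon})$ on $A_{\varepsilon}$, which follows from $(\mathcal{A}_1)$ together with $\diam A_{\varepsilon}/\dist(A_{\varepsilon},\partial\Omega)\to 0$:
\[
  q(a_{\varepsilon})\log\tfrac{1}{\varepsilon}\,\kappa_{\varepsilon}(1+o(1))=\tfrac{1}{\varepsilon^2}\int_{\Omega}b(u_{\varepsilon}-q_{\varepsilon})_+^p q_{\varepsilon}=\int_{\Omega}\tfrac{\abs{\nabla u_{\varepsilon}}^2}{b}-O(1),
\]
and the Dirichlet integral asymptotic already established gives $\kappa_{\varepsilon}\,b(a_{\varepsilon})/q(a_{\varepsilon})\to 2\pi$.
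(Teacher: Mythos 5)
Your overall strategy is the paper's: the upper bound of proposition~\ref{ThUpperBound} is matched against a localised capacitary lower bound for \(\int_\Omega \abs{\nabla u_\varepsilon}^2/b\), and (a), (b), (c) are read off from the confrontation; part (b) in particular is handled exactly as in the paper. The first genuine gap is in the localisation onto balls. To get the sharp constant you need \(m_\varepsilon=\sup_{\partial B(a_\varepsilon,R_\varepsilon)}u_\varepsilon/q_\varepsilon=o(1)\), and your proposed proof of this --- comparison with the explicit capacitary potential of the annulus \(B(a_\varepsilon,\dist(A_\varepsilon,\partial\Omega))\setminus B(a_\varepsilon,\diam A_\varepsilon)\) via the maximum principle --- goes the wrong way: that annulus potential vanishes on the outer sphere while \(u_\varepsilon/q_\varepsilon\) need not, so the comparison yields a \emph{lower} bound for \(u_\varepsilon/q_\varepsilon\) at radius \(R_\varepsilon\), not the upper bound you need. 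An upper bound there depends on the behaviour of \(u_\varepsilon/q_\varepsilon\) at still larger radii, i.e.\ on the global geometry of \(\Omega\), and cannot be extracted from an inscribed annulus. The paper avoids the issue by localising onto superlevel sets of the solution itself: with \(A_\varepsilon^\tau=\{u_\varepsilon>q_\tau\}\) and \(w_\varepsilon^{\sigma,\tau}=\min\bigl((u_\varepsilon-q_\sigma)_+/(q_\tau-q_\sigma),1\bigr)\), testing the equation gives the exact identity \(\log\tfrac{\sigma}{\tau}\int_\Omega\tfrac{q^2}{b}\abs{\nabla w_\varepsilon^{\sigma,\tau}}^2=\tfrac{1}{\varepsilon^2}\int_\Omega b(u_\varepsilon-q_\varepsilon)_+^p q\), and the capacity estimate applied to \(A_\varepsilon^\tau\) shows this set is small relative to its distance to \(\partial\Omega\), so that \((\mathcal{A}_1)\) makes \(q^2/b\) nearly constant on the support of \(w_\varepsilon^{\tau,\varepsilon}\). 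Repairing your ball argument essentially forces you to prove that the superlevel sets \(\{u_\varepsilon/q_\varepsilon>s\}\) are connected and of small diameter, i.e.\ to reproduce the level-set argument.

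The second gap is logical: confronting your lower bound with the upper one does \emph{not} ``yield simultaneously'' (a) and \(\log(R_\varepsilon/\diam A_\varepsilon)\gt\log\tfrac{1}{\varepsilon}(1-o(1))\). The confrontation controls only the product \(\tfrac{q(a_\varepsilon)^2}{b(a_\varepsilon)}\cdot\tfrac{\log\frac{1}{\varepsilon}}{\log(CR_\varepsilon/\diam A_\varepsilon)}\); to extract (a) you must first bound \(\log(CR_\varepsilon/\diam A_\varepsilon)\) from above by \(\log\tfrac{1}{\varepsilon}+O(1)+C\,q(a_\varepsilon)^2/b(a_\varepsilon)\), which is precisely what lemma~\ref{ThBorneInfDiametreAe} together with \((\mathcal{A}_2)\) provide, and then absorb the \(q(a_\varepsilon)^2/b(a_\varepsilon)\) term into the algebra, as the paper does. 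You cite both ingredients but only deploy them after declaring (a) established, which is circular. A minor further point: the asserted equality \(\int_\Omega\abs{\nabla u_\varepsilon}^2/b=\int_{\Omega\setminus A_\varepsilon}(q_\varepsilon^2/b)\abs{\nabla(u_\varepsilon/q_\varepsilon)}^2\) should be ``\(\gt\)'', since lemma~\ref{lemmaChangeWeight} gives the identity over all of \(\Omega\) and \(u_\varepsilon/q_\varepsilon>1\) inside \(A_\varepsilon\); this is harmless for a lower bound but should not be stated as an equality.
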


\begin{proof}
By definition of \(\mathcal{E}_{\varepsilon}\) and by proposition~\ref{lemmaIntegralIdentity}, we have
\begin{equation*}
\begin{split}
\label{equeqeqe} \frac{1}{\varepsilon^2} \int_{\Omega} b (u_{\varepsilon} - q_\varepsilon)_+^p q_\varepsilon 
& = \int_{\Omega} \frac{\abs{ \nabla u_{\varepsilon}}^2}{b} - \int_{A_{\varepsilon}} \frac{\abs{ \nabla (u_{\varepsilon} - q_\varepsilon)}^2}{b}\\
&= 2\mathcal{E}_{\varepsilon}(u_{\varepsilon}) - \frac{p - 1}{p+1} \frac{1}{\varepsilon^2} \int_{A_{\varepsilon}}  b (u_{\varepsilon} - q_\varepsilon)_+^{p+1}.
\end{split}
\end{equation*}
Hence, by proposition~\ref{ThBornesGrandeursPhysiques},
\begin{equation}
\label{eqbuqEe}
\int_{\Omega} b (u_{\varepsilon} - q_\varepsilon)_+^p q \lt \frac{2 \mathcal{E}_{\varepsilon}(u_{\varepsilon})}{\log \tfrac{1}{\varepsilon}} + O \Bigl(\frac{1}{\log \tfrac{1}{\varepsilon}}\Bigr),
\end{equation}
as \(\varepsilon \to 0\).
Define for \(\sigma, \tau \in (0, 1)\) with \(\tau < \sigma\),
\[
 w_{\varepsilon}^{\sigma, \tau} = \min\Bigl(\frac{(u_{\varepsilon} - q_{\sigma})_+}{q_{\tau} - q_{\sigma} }, 1\Bigr).
\]
By testing the equation against \(w_{\varepsilon}^{\sigma, \tau} q\), in view of lemma~\ref{lemmaChangeWeight}
\[
 \log \frac{\sigma}{\tau} \int_{\Omega} \frac{q^2}{b} \abs{\nabla w_{\varepsilon}^{\sigma, \tau}}^2
 = \frac{1}{\varepsilon^2} \int_{\Omega} (u_{\varepsilon} - q_\varepsilon)_+^p q.
\]
In particular, setting
\[
  A_{\varepsilon}^\tau = \bigl\{x \in \Omega : u_{\varepsilon} (x) > q_{\tau} (x)\bigr\},
\]
we have
\[
 \capa (A_{\varepsilon}^{\tau}, \Omega) \lt \frac{\displaystyle \int_{\Omega} \frac{q^2}{b} \abs{\nabla w_{\varepsilon}^{1, \tau}}^2}{\displaystyle \inf_{\Omega} \frac{q^2}{b}}
\]
and thus by capacity estimates \cite{SmVS10}*{proposition~A.3} (see also \cite{Fr81}), since \(\R^2 \setminus \Omega\) is unbounded and connected,
\[
  \frac{2\pi}{\displaystyle \log 16 \Bigl(1 + \frac{2 \dist (A_{\varepsilon}^{\tau}, \partial \Omega)}{\diam A_{\varepsilon}^{\tau}}\Bigr)}
\lt \frac{\displaystyle \frac{1}{\varepsilon^2} \int_{\Omega} b (u_{\varepsilon} - q_\varepsilon)_+^p q}{\log \frac{1}{\tau} \displaystyle \inf_{\Omega} \frac{q^2}{b}}. 
\]

By \eqref{eqbuqEe} and by proposition~\ref{ThUpperBound}, we have
\begin{equation}
\label{eqliminfAetau}
 \liminf_{\varepsilon \to 0} \log 16 \Bigl(1 + \frac{2 \dist (A_{\varepsilon}^{\tau}, \partial \Omega)}{\diam A_{\varepsilon}^{\tau}}\Bigr) \gt \log \frac{1}{\tau},
\end{equation}
and thus, by \((\mathcal{A}_1)\), for every \(\delta > 0\), there exists \(\rho > 0\) and \(\varepsilon_0 > 0\) such that for every \(x, y \in A_{\varepsilon}^{\rho}\) with \(\varepsilon \in (0, \varepsilon_0)\),
\[
 \frac{q (x)^2}{b (x)}
\lt \frac{q (y)^2}{b (y)} (1 + \delta).
\]
We have thus
\begin{equation}
\label{eqwete}
 \frac{q (a_{\varepsilon})^2}{b (a_{\varepsilon})(1 + \delta)}   \int_{\Omega} \abs{\nabla w_{\varepsilon}^{\tau, \varepsilon}}^2
\lt \int_{\Omega} \frac{q^2}{b} \abs{\nabla w_{\varepsilon}^{\tau, \varepsilon}}^2
 \lt \frac{1}{\log \frac{\tau}{\varepsilon}} \frac{1}{\varepsilon^2} \int_{\Omega} (u_{\varepsilon} - q_\varepsilon)_+^p q .
\end{equation}
By capacity estimates, we have thus that for every \(\varepsilon \in (0, \varepsilon_0)\),
\[
  \int_{\Omega} \abs{\nabla w_{\varepsilon}^{\tau, \varepsilon}}^2 \gt \capa (A_\varepsilon, \Omega) \gt  \frac{2 \pi}{\log 16 (1 + \frac{2 \dist (A_\varepsilon, \partial \Omega)}{\diam (A_\varepsilon)})}
\]
and hence
\[
 \frac{q (a_{\varepsilon})^2}{b (a_{\varepsilon})} \frac{2 \pi}{\log 16 (1 + \frac{2 \dist (A_\varepsilon, \partial \Omega)}{\diam (A_\varepsilon)})}
\lt \frac{1+ \delta}{\varepsilon^2} \int_{\Omega} (u_{\varepsilon} - q_\varepsilon)_+^p q.
\]
In view of lemma~\ref{ThBorneInfDiametreAe}, we have
\[
 \limsup_{\varepsilon \to 0} \frac{q (a_{\varepsilon})^2}{b (a_{\varepsilon})} \frac{\log \frac{\tau}{\varepsilon}}{\log 16 \Bigl(1 + \dfrac{C \dist (a_{\varepsilon}, \partial \Omega) b (a_{\varepsilon})^{(p + 1)/2}}{\varepsilon q (a_{\varepsilon})^{(p - 1)/2}}\Bigr)}
\lt (1 + \delta)  \inf_{\Omega} \frac{q^2}{b}.
\]

Now, note that
\begin{multline*}
 \log 16 \Bigl(1 + \dfrac{C \dist (a_{\varepsilon}, \partial \Omega) b (a_{\varepsilon})^{(p + 1)/2}}{\varepsilon q (a_{\varepsilon})^{(p - 1)/2}}\Bigr)\\
\lt \log 16 \Bigl(1 + \frac{C}{\varepsilon}\Bigr) + \log \Bigl(1 + \dfrac{\dist (a_{\varepsilon}, \partial \Omega) b (a_{\varepsilon})^{(p + 1)/2}}{q (a_{\varepsilon})^{(p - 1)/2}}\Bigr).
\end{multline*}
By assumption \((\mathcal{A}_2)\), we have thus
\begin{multline*}
 \limsup_{\varepsilon \to 0} \frac{q (a_{\varepsilon})^2}{b (a_{\varepsilon})} \frac{\log \frac{\tau}{\varepsilon}}{\log 16 \Bigl(1 + \dfrac{2 \dist (a_{\varepsilon}, \partial \Omega) b (a_{\varepsilon})^{(p + 1)/2}}{\varepsilon q (a_{\varepsilon})^{(p - 1)/2}}\Bigr)}\\
 \gt \limsup_{\varepsilon \to 0} \frac{\dfrac{\log \frac{\tau}{\varepsilon}}{\log 16 (1 + \frac{C}{\varepsilon})}}
{\dfrac{b (a_{\varepsilon})}{q (a_{\varepsilon})^2} + \dfrac{C'}{\log 16 ( 1 + \frac{C}{\varepsilon})}}
\gt  \limsup_{\varepsilon \to 0} \frac{q (a_{\varepsilon})^2}{b (a_{\varepsilon})}.
\end{multline*}
Hence, we conclude that 
\[
 \limsup_{\varepsilon \to 0} \frac{q (a_{\varepsilon})^2}{b (a_{\varepsilon})} \lt (1 + \delta) \inf_{\Omega} \frac{q^2}{b}.
\]
Since \(\delta > 0\) is arbitrary, we have \eqref{EqXeConvergeVersArgMin}.

To obtain \eqref{EqBornesInfSupDiametre}, we note that 
\[
 \limsup_{\varepsilon \to 0} 
   \frac{\log \dfrac{\dist (A_{\varepsilon}, \partial \Omega)}{\diam (A_{\varepsilon})}}
        {\log \dfrac{1}{\varepsilon}} 
\lt 1
\]
and that by lemma~\ref{ThBorneInfDiametreAe},
\[
 \liminf_{\varepsilon \to 0} 
    \frac{\log \dfrac{ b(a_{\varepsilon})^\frac{p + 1}{2}}{\diam (A_{\varepsilon}) q(a_{\varepsilon})^\frac{p - 1}{2}}}
         {\log \dfrac{1}{\varepsilon}} 
\lt 1.
\]
We conclude since by \((\mathcal{A}_2)\) 
\[
\begin{split}
 \limsup_{\varepsilon \to 0}
   \frac{\log \dfrac{\dist (A_{\varepsilon}, \partial \Omega)}{\diam (A_{\varepsilon})}}
        {\log \dfrac{1}{\varepsilon}} 
-    \frac{\log \dfrac{ b(a_{\varepsilon})^\frac{p + 1}{2}}{\diam (A_{\varepsilon}) q(a_{\varepsilon})^\frac{p - 1}{2}}}
         {\log \dfrac{1}{\varepsilon}} 
&= \limsup_{\varepsilon \to 0}
   \frac{\log \dfrac{\dist (a_{\varepsilon}, \partial \Omega)b(a_{\varepsilon})^\frac{p + 1}{2}}
                    {\diam (A_{\varepsilon}) q(a_{\varepsilon})^\frac{p - 1}{2}}}
        {\log \dfrac{1}{\varepsilon}}\\
& \lt \limsup_{\varepsilon \to 0} 
       \frac{C' \dfrac{q (a_{\varepsilon})^2}{b (a_{\varepsilon})}}
            {\log \tfrac{1}{\varepsilon}}
= 0.
\end{split}  
\]

To obtain \eqref{EqKappaBorneInf}, note that by \eqref{equeqeqe} and by lemma~\ref{ThBornesGrandeursPhysiques}, we have
\[
  \lim_{\varepsilon \to 0}
\frac{1}{\varepsilon^2} \int_{A_{\varepsilon}} b (u_{\varepsilon}-q_\varepsilon)_+^p q = 2\pi \inf_{\Omega} \frac{q^2}{b}
\]
and by lemma~\ref{lemmaAeConnecte}, we have
\[
 \lim_{\varepsilon \to 0} \int_{A_{\varepsilon}} b (u_{\varepsilon}-q_\varepsilon)_+^p q - q (a_{\varepsilon}) \int_{A_{\varepsilon}} (u_{\varepsilon} - q_\varepsilon)_+^p = 0.\qedhere
\]
\end{proof}

If \(\frac{q^2}{b}\) is Dini-continuous and if the solutions concentrate around an interior point, we have the following improvement.

\begin{proposition}
\label{propositionImprovedAsymptotics}
If \(\lim_{\varepsilon \to 0} a_{\varepsilon} = \Hat{a} \in \Omega\) and \(\frac{q^2}{b}\) is Dini-continuous in a neighbourhood of \(\Hat{a}\),
then
\begin{gather*}
\frac{\mathcal{E}_{\varepsilon}(u_{\varepsilon})}{\pi}
= \frac{1}{2 \pi} \int_{\Omega} \frac{\abs{\nabla u_{\varepsilon} }^2 }{b} + O (1)
= \frac{q(a_{\varepsilon})^2}{b(a_{\varepsilon})}\log \tfrac{1}{\varepsilon} + O (1)
= \inf_{\Omega} \frac{q^2}{b}\log \tfrac{1}{\varepsilon} + O (1),\\
   0 < \liminf_{\varepsilon \to 0} \frac{\diam A_{\varepsilon}}{\varepsilon} \lt \limsup_{\varepsilon \to 0} \frac{\diam A_{\varepsilon}}{\varepsilon} < \infty.
\end{gather*}
\end{proposition}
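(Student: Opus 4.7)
The plan is to refine the proofs of propositions~\ref{ThUpperBound} and~\ref{propositionAsymptotics} by upgrading each error term of the form $o(\log \tfrac{1}{\varepsilon})$ to $O(1)$, exploiting that Dini continuity of $q^2/b$ near $\Hat{a}$ means the integral $\int_0^\delta \omega(r)/r\,\du r$ is finite, where $\omega$ is the modulus of continuity of $q^2/b$. The first identity,
\[
\frac{\mathcal{E}_\varepsilon(u_\varepsilon)}{\pi} = \frac{1}{2\pi}\int_{\Omega} \frac{\abs{\nabla u_\varepsilon}^2}{b} + O(1),
\]
is immediate from the definition of $\mathcal{E}_\varepsilon$ together with the uniform bound $\frac{1}{\varepsilon^2}\int_{A_\varepsilon} b(u_\varepsilon - q_\varepsilon)_+^{p+1} \lt C$ provided by proposition~\ref{ThBornesGrandeursPhysiques}; it does not require Dini continuity.

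For the sharp identities involving $\log\tfrac{1}{\varepsilon}$, I combine the upper bound $c_\varepsilon \lt \pi \log\tfrac{1}{\varepsilon} \inf_\Omega q^2/b + O(1)$ supplied by proposition~\ref{propImprovedUpper} with a refined lower bound obtained by revisiting the capacity argument in the proof of proposition~\ref{propositionAsymptotics}. The key observation is that in the estimate~\eqref{eqwete}, the multiplicative $(1+\delta)$ factor can be replaced by an additive $O(\omega(\diam A_\varepsilon^\tau))$ correction. Aggregating this correction over a dyadic chain of scales ranging from $\varepsilon$ up to a fixed neighbourhood of $\Hat{a}$, the total accumulation is a finite multiple of $\int_0^\rho \omega(r)/r\,\du r$ and thus $O(1)$. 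This yields the refined inequality $\mathcal{E}_\varepsilon(u_\varepsilon) \gt \pi \frac{q(a_\varepsilon)^2}{b(a_\varepsilon)}\log \tfrac{1}{\varepsilon} - O(1)$. Since $\Hat{a}$ is a minimizer of $q^2/b$ by proposition~\ref{propositionAsymptotics}, we have $q(a_\varepsilon)^2/b(a_\varepsilon) \gt q(\Hat{a})^2/b(\Hat{a}) = \inf_\Omega q^2/b$, and sandwiching the upper and lower bounds forces both missing identities simultaneously.

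For the diameter asymptotics, the lower bound $\diam A_\varepsilon \gt c\varepsilon$ is the direct conclusion of lemma~\ref{ThBorneInfDiametreAe} together with the fact that $q(a_\varepsilon)$ and $b(a_\varepsilon)$ are bounded above and bounded away from $0$. For the matching upper bound, I substitute the refined energy equality into the capacity estimate $\capa(A_\varepsilon, \Omega) \gt 2\pi / \log 16\bigl(1 + 2\dist(A_\varepsilon, \partial\Omega)/\diam A_\varepsilon\bigr)$ from \cite{SmVS10}*{proposition~A.3}: this forces $\log\bigl(1 + 2\dist(A_\varepsilon, \partial\Omega)/\diam A_\varepsilon\bigr) \gt \log\tfrac{1}{\varepsilon} - O(1)$, and since $\dist(A_\varepsilon, \partial\Omega)$ remains bounded as $a_\varepsilon \to \Hat{a} \in \Omega$, we conclude $\diam A_\varepsilon \lt C\varepsilon$.

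The principal obstacle is the dyadic bookkeeping needed to aggregate the Dini-continuity errors cleanly across the intermediate scales between $\varepsilon$ and a fixed radius in the Dini-continuity neighbourhood of $\Hat{a}$. Verifying that all these scales lie inside the Dini neighbourhood is automatic for $\varepsilon$ small because $A_\varepsilon$ concentrates at the interior point $\Hat{a}$; however, the quantitative aggregation must be done with care so that the total error is controlled by the convergent Dini integral $\int_0^\rho \omega(r)/r\,\du r$.
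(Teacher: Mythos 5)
Your proposal is correct and follows essentially the same route as the paper: the improved upper bound comes from proposition~\ref{propImprovedUpper}, the matching lower bound comes from replacing the multiplicative \((1+\delta)\) loss in \eqref{eqwete} by an additive modulus-of-continuity correction aggregated over a chain of scales between \(\varepsilon\) and a fixed radius (the paper sums the annulus-by-annulus capacity estimates and passes to a Riemann sum for \(\int_\varepsilon^\rho \omega(C\tau)\tau^{-1}\du\tau\), which is exactly your dyadic bookkeeping), and the two-sided bound \(\diam A_\varepsilon \asymp \varepsilon\) is obtained from lemma~\ref{ThBorneInfDiametreAe} and the capacity estimate as you describe. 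The one point worth making explicit in the write-up is the bound \(\diam A_\varepsilon^{\tau} \lt C\tau\) (obtained from \(1 + 2\dist(A_\varepsilon^{\tau},\partial\Omega)/\diam A_\varepsilon^{\tau} \gt c/\tau\) together with the boundedness of \(\dist(A_\varepsilon^\tau,\partial\Omega)\) as \(a_\varepsilon \to \Hat{a}\)), since it is what converts \(\omega(\diam A_\varepsilon^\tau)\) into \(\omega(C\tau)\) and makes the Dini integral appear.
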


\begin{proof}[Sketch of the proof of proposition~\ref{propositionImprovedAsymptotics}]
Beginning as in the proof of proposition~\ref{propositionAsymptotics}, we have, as proposition~\ref{propImprovedUpper} is applicable in place of \eqref{eqliminfAetau}, that there exists \(c > 0\) such that 
\[
 \Bigl(1 + \frac{2 \dist (A_{\varepsilon}^{\tau}, \partial \Omega)}{\diam A_{\varepsilon}^{\tau}}\Bigr) \gt \frac{c}{\tau}.
\]
Since \(\lim_{\varepsilon \to 0} a_{\varepsilon} = \Hat{a} \in \Omega\), there exists \(\rho > 0\) such that for every \(\sigma \gt \rho\) and \(x \in A_{\varepsilon}^\sigma\), \(\abs{x - a_{\varepsilon}} \lt C \sigma\).
This implies that 
\[
 \Bigl(\log \frac{\sigma}{\tau}\Bigr)^2 \int_{\Omega} (u_{\varepsilon} - q_\varepsilon)^p_+ q \lt  \Bigl(\log \frac{\sigma}{\tau}\Bigr)
\frac{q (\Hat{a})^2}{b (\Hat{a})} \bigl(1 + \omega (C \tau)\bigr) \frac{1}{\varepsilon^2} \int_{\Omega} (u_{\varepsilon} - q_\varepsilon)^p_+ q.
\]
Taking now \(\varepsilon = \tau_1 < \sigma_1 = \tau_2 < \sigma_2 < \dotsc < \sigma_k = \rho\) and  summing the previous inequality over \(j \in \{1, \dotsc, k\}\), we obtain
\[
  \frac{q (\Hat{a})^2}{b (\Hat{a})} \Bigl(\log \frac{\rho}{\varepsilon}\Bigr)^2 \int_{\Omega} \abs{\nabla w_{\varepsilon}^{\rho, \varepsilon}}^2
\lt \Bigl( \log \frac{\rho}{\varepsilon}+ \sum_{j = 1}^k \omega (C \sigma_i)  \log \frac{\sigma_i}{\tau_i}\Bigr) \frac{1}{\varepsilon^2} \int_{\Omega} (u_{\varepsilon} - q_\varepsilon)^p_+ q.   
\]
By taking the limit of Riemann sums, we conclude that 
\[
 \frac{q (\Hat{a})^2}{b (\Hat{a})}  \int_{\Omega} \abs{\nabla w_{\varepsilon}^{\rho, \varepsilon}}^2 \lt 
\Bigl(\frac{1}{\log \frac{\rho}{\varepsilon}} + \frac{1}{(\log \frac{\rho}{\varepsilon})^2} \int_\varepsilon^\rho \frac{\omega (C \tau)}{\tau} \du \tau \Bigr) \frac{1}{\varepsilon^2} \int_{\Omega} (u_{\varepsilon} - q_\varepsilon)^p_+ q,
\]
which improves \eqref{eqwete} and allows to continue the proof.
\end{proof}

\section{Construction and asymptotics of vortices}

In this section we go back to the axisymmetric Euler equation and the shallow water equation and prove our main results.

\subsection{Vortex rings for the Euler equation}
For the Euler equation, the solutions of the previous sections gives us a suitable Stokes stream functions.

\subsubsection{Vortex ring in the whole space}
The first case is the construction of a vortex ring in the whole space.

\begin{proof}[Proof of theorem~\ref{theoremVortexRingSpace}]
Define for every \(r \in (0, \infty)\) and \(z \in \R\), \(b (r, z) = r\) and 
\[
 q (r, z) = W \frac{r^2}{2} + \frac{3}{8 W}\Bigl(\frac{\kappa}{2 \pi}\Bigr)^2.
\]
One computes directly that \(\frac{q^2}{b}\) achieves its minimum at \( (\frac{\kappa}{4 \pi W}, 0)\) and that 
\[
 2 \pi \frac{q (r_*, 0)}{b (r_*, 0)} = \kappa.
\]
By proposition~\ref{propositionExistenceInvariant}, the problem has a solution for every \(\varepsilon \in (0, 1)\). 
Define
\[
 \mathbf{v}_\varepsilon (r, z) = \curl \Bigl((u_{\varepsilon} + q_\varepsilon) \frac{\mathbf{e}_\theta}{r}\Bigr)
\]
and 
\[
 p_\varepsilon (r, z) = \frac{(u_{\varepsilon} - q_\varepsilon)^{p + 1}}{p + 1} - \frac{\abs{\mathbf{v}_\varepsilon}^2}{2}.
\]

One computes that 
\[
 \lim_{\abs{x} \to \infty} \mathbf{v} =  \frac{\kappa}{4 \pi r^*} \log \tfrac{1}{\varepsilon}
\]
and that 
\[
 \curl \mathbf{v}_\varepsilon (r, z) = (u_\varepsilon (r, z) - q_\varepsilon (r, z))_+^p \mathbf{e}_\theta.
\]
The conclusion follows by the asymptotics of propositions~\ref{propositionAsymptotics} and~\ref{propositionImprovedAsymptotics}.
\end{proof}

\subsubsection{Vortex ring in a cylinder}

The proof of theorem~\ref{theoremVortexRingCylinder} is very similar.

\begin{proof}[Proof of theorem~\ref{theoremVortexRingCylinder}]
If \(\kappa < 4 \pi W\), one defines \(b\) and \(q\) as in the proof of theorem~\ref{theoremVortexRingSpace}. 
Otherwise one sets
\[
  q (r, z) = \frac{W}{2} r^2 + \Bigl(\frac{\kappa}{2 \pi} - \frac{W}{2}\Bigr). 
\]
One checks that \(\frac{q^2}{b}\) achieves its minimum at \((1, 0)\).
Since \(\frac{\kappa}{2 \pi} - \frac{W}{2} \gt 0\), we can then use proposition~\ref{propositionAsymptotics} in the asymptotics.
\end{proof}

\subsubsection{Vortex ring outside a ball}
For the construction of a vortex ring outside a ball, we use the strict inequality of proposition~\ref{propositionStrict}.

\begin{proof}[Proof of theorem~\ref{theoremVortexRingOutsideBall}]
If \(\kappa > 6 \pi W\), let \(r_*\) be the unique number such that 
\[
 2 r_* + \frac{1}{r_*^2} = \frac{\kappa}{2 \pi W}.
\]
Define now
\[
 q (r, z) = \frac{W}{2}\Bigl(r^2 - \frac{r}{r^2 + z^2}\Bigr) + \frac{3 W}{2} \Bigl(r_*^2 + \frac{1}{r_*}\Bigr).
\]
Observe that if \(z \ne 0\),
\[
 q (r, z) > q (r, 0).
\]
If the function \( r \in [1, \infty) \mapsto q (r, z)\) achieves its maximum at \(\Tilde{r} \in (1, \infty)\), by Fermat's theorem, 
\[
  \frac{3}{2}\frac{\frac{W}{2}\bigl(\Tilde{r}^2 - \frac{1}{\Tilde{r}}\bigr) + \frac{3 W}{2} \bigl(r_*^2 + \frac{1}{r_*}\bigr)}{r^2}
\Bigl( \Tilde{r}^2 + \frac{1}{\Tilde{r}} - r_*^2 - \frac{1}{r_*}\Bigr) = 0,
\]
from which we deduce that \(\Tilde{r} = r_*\).
Define
\[
  q^\infty (x) = \frac{W}{2} r^2 + \frac{3 W}{2} \Bigl(r_*^2 + \frac{1}{r_*}\Bigr),
\]
and observe that 
\[
 \lim_{\abs{x} \to \infty} \frac{q (x)}{q^\infty (x)} = 1,
\]
and that 
\[
  \inf_{\Omega} \frac{(q^\infty)^2}{b} = \frac{q^\infty (r^\infty_*, 0)^2}{b (r^\infty_*, 0)}
> \frac{q (r^\infty_*, 0)^2}{b (r^\infty_*, 0)}
\]
with
\[
 (r^\infty_*)^2 = r_*^2 + \frac{1}{r_*}.
\]
In particular, since \(r_* \gt 1\), \((r^\infty_*, 0) \in \R^2_+ \setminus B_1\).
By proposition~\ref{ThUpperBound}, we have
\[
 \limsup_{\varepsilon \to \infty} \frac{c_\varepsilon}{\log \tfrac{1}{\varepsilon}} \lt 
\inf_{\R^2 \setminus B_1} \frac{q^2}{b} <  \pi \frac{q^\infty (r^\infty_*, 0)^2}{b (r^\infty_*, 0)}
\]
and by proposition~\ref{propositionAsymptotics}, 
\[
 \lim_{\varepsilon \to \infty} \frac{c^\infty_\varepsilon}{\log \tfrac{1}{\varepsilon}} = \pi \frac{q^\infty (r^\infty_*, 0)^2}{b (r^\infty_*, 0)}.
\]
By proposition~\ref{propositionStrict}, the problem~\eqref{problemP} has a solution. 
One constructs the flow and studies its asymptotics by proposition~\ref{propositionAsymptotics} as in the proof of theorem~\ref{theoremVortexRingSpace}.

If \(\kappa \lt 6 \pi W \), define 
\[
 q (r, z) = \frac{W}{2}\Bigl(r^2 - \frac{r}{r^2 + z^2}\Bigr) + \frac{\kappa}{2 \pi},
\]
and 
\[
 q^\infty (r, z) = \frac{W}{2} r^2 + \frac{\kappa}{2 \pi}
\]
and observe that \(\frac{q^2}{b}\) achieves its maximum at \((1, 0)\) and that
\[
 \inf_{\R^2_+ \setminus B_1} \frac{q^2}{b} = \Bigl(\frac{\kappa}{2 \pi}\Bigr)^2
\]
and 
\[
 \inf_{(r, z) \in \R^2_+ \setminus B_1} \frac{q^\infty (r, z)}{r}
= \frac{16}{9 \sqrt{2}} \sqrt{\frac{6 \pi W}{\kappa}} \Bigl(\frac{\kappa}{2 \pi}\Bigr)^2
> \Bigl(\frac{\kappa}{2 \pi}\Bigr)^2
\]
since \(\kappa \lt 6 \pi W\).
The rest of the proof is similar to the case \(\kappa > 6 \pi W \). 
\end{proof}

\subsubsection{Vortex ring outside a compact set}

In order to construct solutions outside an arbitrary compact set, we first construct and study the irrotational flow.

\begin{lemma}
\label{lemmaLinear}
Let \(\alpha > - 1\), \(k \gt 0\) and \(K \subset \R^2\).
Define \(b : \R^2_+ \to \R\) and \(q^\infty : \R^2_+ \to \R\) be defined for \(x = (x_1, x_2) \in \R^2_+\) by 
\begin{align*}
b (x) &= x_1^\alpha &
& \text{ and }&
  q^\infty (x) &= \tfrac{W}{\alpha + 1} x_1^{\alpha + 1} + k.
\end{align*}
If \(K\) is compact and satisfies an interior cone condition at every point of \(\partial K \cap \R^2_+\), then there exists a unique solution \(q \in H^1_\mathrm{loc} (\R^2_+ \setminus K) \cap C (\overline{\R^2_+ \setminus K})\) such that 
\begin{equation*}
\left\{
\begin{aligned}
   - \dive \frac{\nabla q}{b} & = 0 & & \text{in \(\R^2_+ \setminus K\)},\\
   q   & =  k & & \text{on \(\partial (\R^2_+ \setminus K)\)},\\
   \lim_{\abs{x} \to \infty} \frac{q (x)}{q_\infty (x)} & = 1.
\end{aligned}
\right.
\end{equation*}
Moreover \(q \in C^\infty (\R^2_+)\), 
\[
  \lim_{\abs{x} \to \infty} \frac{\nabla q (x)}{ x_1^{\alpha}} = (W, 0),
\]
and, if \(K \cap \R^2_+ \ne \emptyset\), for every \(x \in \R^2_+ \setminus K\),
\[
  q (x) < q^\infty (x).
\]
\end{lemma}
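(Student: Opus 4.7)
My plan is to reduce to a homogeneous boundary-value problem for the perturbation \(w = q^\infty - q\) and construct \(w\) by a standard exhaustion plus maximum principle argument. First observe that \(\nabla q^\infty/b = (W, 0)\) is a constant vector field, so \(q^\infty\) itself solves \(-\dive(\nabla q^\infty/b) = 0\); the problem for \(q\) is therefore equivalent to finding \(w \in H^1_{\mathrm{loc}}(\R^2_+ \setminus K) \cap C(\overline{\R^2_+ \setminus K})\) satisfying the homogeneous equation \(-\dive(\nabla w/b) = 0\) on \(\R^2_+ \setminus K\), the boundary values \(w = \tfrac{W}{\alpha+1}\,x_1^{\alpha+1}\) on \(\partial K \cap \R^2_+\) and \(w = 0\) on \(\partial \R^2_+ \setminus K\), together with the decay \(w(x)/q^\infty(x) \to 0\) as \(\abs{x} \to \infty\).

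To produce \(w\), I would exhaust \(\R^2_+ \setminus K\) by the bounded domains \(\Omega_R = (B_R \cap \R^2_+) \setminus K\) and solve on each \(\Omega_R\) the Dirichlet problem with boundary values \(\tfrac{W}{\alpha+1}\,x_1^{\alpha+1}\) on \(\partial K \cap \R^2_+\) and \(0\) on the remainder of \(\partial \Omega_R\). Since \(b\) is smooth and positive on \(\R^2_+\) and the Dirichlet data vanish on the degeneracy set \(\{x_1 = 0\}\), existence of \(w_R\) in the weighted space \(H^1(\Omega_R, b)\) follows from the direct method applied to the quadratic functional \(v \mapsto \int_{\Omega_R} \abs{\nabla v}^2/b\). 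Comparison with the sub- and super-solutions \(0\) and \(q^\infty - k\) (both of which solve the homogeneous equation and satisfy the appropriate boundary inequalities) yields the uniform two-sided bound \(0 \lt w_R \lt q^\infty - k\) on \(\Omega_R\). A diagonal subsequence then converges weakly in \(H^1_{\mathrm{loc}}\) and locally uniformly to a solution \(w\) on \(\R^2_+ \setminus K\). Interior smoothness of \(q\) on \(\R^2_+\) is immediate from classical elliptic regularity, the equation being uniformly elliptic on each set \(\{x_1 > \delta\}\); continuity up to \(\partial K \cap \R^2_+\) follows from the interior cone condition via standard barrier constructions, and continuity up to \(\partial \R^2_+ \setminus K\) (with value \(k\)) is forced by the squeeze \(0 \lt w \lt q^\infty - k\) since both outer functions vanish there.

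The asymptotic behaviour \(w(x)/q^\infty(x) \to 0\) at infinity is the most delicate step and is where the compactness of \(K\) really enters. I would obtain it by a rescaling argument: for \(R\) large, \(w_R(y) := R^{-(\alpha+1)} w(Ry)\) satisfies the same equation on an annular region of the half-plane that is free of the rescaled obstacle, it vanishes on \(\{y_1 = 0\}\), and it is bounded uniformly in \(R\) by the fixed function \(q^\infty(Ry)/R^{\alpha+1}\). A Liouville-type statement (or equivalently an oscillation estimate for the rescaled degenerate equation in the exterior of a point, combined with the vanishing trace on the axis) then forces the family to tend to \(0\), so that \(w(Ry)/q^\infty(Ry) \to 0\) uniformly in \(y\) on compact subsets. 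The companion statement \(\nabla q(x)/x_1^\alpha \to (W, 0)\) follows from the same rescaling together with interior Schauder estimates applied to \(w_R\).

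Uniqueness and the strict inequality \(q < q^\infty\) both reduce to the maximum principle. For uniqueness, given two candidates \(q_1, q_2\) the difference \(\tilde w\) solves the homogeneous equation with zero boundary data and satisfies \(\tilde w/q^\infty \to 0\); comparison with \(\pm \varepsilon(q^\infty - k)\) on \(\Omega_R\) for \(R\) large gives \(\abs{\tilde w} \lt \varepsilon(q^\infty - k)\) everywhere, and sending \(\varepsilon \to 0\) yields \(\tilde w \equiv 0\). For the strict inequality, if \(K \cap \R^2_+ \ne \emptyset\) then \(w = q^\infty - q\) is a non-negative solution of the homogeneous equation that is strictly positive somewhere on \(\partial K \cap \R^2_+\); the strong maximum principle forces \(w > 0\) throughout \(\R^2_+ \setminus K\), which is exactly \(q < q^\infty\).
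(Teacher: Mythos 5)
Your overall architecture (perturb off \(q^\infty\), solve for the correction by energy methods, squeeze it between \(0\) and \(q^\infty - k\), finish with the maximum principle) is close in spirit to the paper's, which instead writes \(q = q^\infty - g + v\) with \(g\) a compactly supported cutoff of \(\tfrac{W}{\alpha+1}x_1^{\alpha+1}\) and obtains \(v\) in one stroke as the minimizer of \(\tfrac12\int\abs{\nabla v}^2/b-\int\nabla g\cdot\nabla v/b\) over \(H^1_0(\R^2_+\setminus K,b)\). However, the step you yourself flag as the most delicate --- the decay \(w/q^\infty\to 0\) at infinity --- is not actually established. The inputs you feed into your ``Liouville-type statement'' are that \(w_R\) solves the equation on an annulus of the half-plane, vanishes on \(\{y_1=0\}\), and is bounded by \(\tfrac{W}{\alpha+1}y_1^{\alpha+1}+o(1)\); these three facts do not force the blow-down limit to vanish, since \(c\,y_1^{\alpha+1}\) satisfies all of them for any \(c\in(0,\tfrac{W}{\alpha+1}]\). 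Equivalently, the two-sided bound \(0\lt q^\infty-q\lt q^\infty-k\) produced by your exhaustion is perfectly consistent with \(q/q^\infty\) tending to a constant other than \(1\) (e.g.\ at points with \(x_1\) of order \(1\) and \(\abs{x}\) large, the squeeze gives no smallness at all). You must inject a genuinely global piece of information. Two ways to do it: (i) note that your minimizers on \(\Omega_R\) have weighted Dirichlet energy bounded by that of the fixed competitor \(g\), so the limit \(w\) has finite global energy, whereas \(y_1^{\alpha+1}\) has infinite energy at infinity (\(\int x_1^{\alpha}\,\du x=\infty\) over unbounded strips), which kills the offending mode in the blow-down; or (ii) as the paper does, compare on \(\R^2_+\setminus B(0,2R)\) with the explicit solution \(-Wx_1^{\alpha+1}(2R)^{\alpha+2}/\abs{x}^{\alpha+2}\), which gives a quantitative decay rate that also feeds directly into the gradient asymptotics.

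A second, smaller defect is in the uniqueness argument. Your comparison function \(\varepsilon(q^\infty-k)=\varepsilon\tfrac{W}{\alpha+1}x_1^{\alpha+1}\) vanishes on the axis, so on the part of the outer boundary \(\partial B_R\cap\R^2_+\) near \(\{x_1=0\}\) the required inequality \(\abs{\Tilde{w}}\lt\varepsilon(q^\infty-k)\) is not available: the hypothesis \(\Tilde{w}/q^\infty\to 0\) only gives \(\abs{\Tilde{w}}=o(1)\) there, with no rate in \(x_1\). The comparison does close if you instead dominate \(\Tilde{w}\) on \(\Omega_R\) by \(\eta_R\,q^\infty\) with \(\eta_R=\sup_{\partial B_R\cap\R^2_+}\abs{\Tilde{w}}/q^\infty\to 0\), since \(q^\infty\) is itself a positive solution bounded below by \(k\) on the axis; letting \(R\to\infty\) then yields \(\Tilde{w}\equiv 0\). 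The remaining points (interior regularity, continuity up to \(\partial K\) from the cone condition, continuity at the axis from the squeeze, and the strict inequality \(q<q^\infty\) via the strong maximum principle) are fine and match the paper.
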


\begin{proof}
Since \(K\) is compact there exists \(R > 0\) such that \(K \subset B (0, R)\).
Choose \(\varphi \in C^\infty (\R^2)\) so that \(\varphi = 1\) on \(B (0, R)\) and \(\varphi = 0\) in \(\R^2 \setminus B (0, 2 R)\) and define \(g : \R^2_+ \to \R\) for \(x \in \R^2_+\) by 
\[
 g (x) =  \frac{W}{\alpha + 1} \varphi (x)  x_1^{\alpha + 1}.
\]
Observe that since \(\alpha > - 1\),
\[
 \int_{\R^2_+} \frac{\abs{\nabla g}^2}{b}
\lt 2 \Bigl(\frac{W}{\alpha + 1}\Bigr)^2 \int_{\R^2_+} \abs{\nabla \varphi (x)}^2 x_1^{\alpha + 2} + (\alpha + 1)^2  \abs{\varphi (x)}^2 x_1^{\alpha} \du x < \infty.
\]
Construct the function \(v \in H^1_0 (\R^2_+ \setminus K)\) by minimizing the Dirichlet energy
\begin{equation*}
  \frac{1}{2} \int_{\Omega} \frac{\abs{\nabla v}^2}{b} - \int_{\Omega} \frac{\nabla g \cdot \nabla v}{b}
\end{equation*}
over \(H^1_0 (\Omega, b)\) and set 
\[
q = q_\infty - g + v. 
\]

One has clearly \(v \in H^1_\mathrm{loc} (\R^2_+ \setminus K)\) and 
\[
  \dive \frac{\nabla q}{b} = 0
\]
weakly in \(\R^2_+ \setminus K\). By the classical interior regularity theory, \(v \in C^\infty (\R^2_+ \setminus K)\).
Since \(K \cap \R^2_+\) satisfies an interior cone condition at every point of \(\partial K \cap \R^2_+\), \(v\) is continous on 
\(\R^2_+ \setminus \inter K\) \cite{GiTr01}*{Corollary 8.28}.

Now we claim that \(v \lt g\). Indeed, by taking \((v - g)_+ \in H^1_0 (\Omega, b)\) as a test function in the equation, we have
\[
  \int_{\R^2_+ \setminus K} \abs{\nabla (v - g)_+}^2 = \int_{\R^2_+ \setminus K} (\nabla v - \nabla g) \cdot \nabla (v - g)_+ = 0,
\]
so that \(v \lt g\). In particular, we have \(q \lt q_\infty\). Similarly, one has that \(v \gt k + g - q_\infty\), so that we have proved that 
\begin{equation}
\label{eqLinearVSandwich}
  k + g - q_\infty \lt v \lt g;
\end{equation}
in particular, \(v\) is continuous on \(\partial \R^2_+ \setminus \inter K\).
By the strong maximum principle, we have \(v > k + g - q_\infty\) in \(\R^2_+ \setminus K\).

Moreover, we have by \eqref{eqLinearVSandwich} for every \(x \in \R^2_+ \setminus B (0, 2 R)\),
\[
  v (x) \gt - \frac{W}{\alpha + 1} x_1^{\alpha + 1}.
\]
Define
\begin{equation}
\label{eqLinearw}
  w (x) = -  W x_1^{\alpha + 1}\frac{(2 R)^{\alpha + 2}}{\abs{x}^{\alpha + 2}}.
\end{equation}
One checks that \(\dive \frac{\nabla w}{b} = 0\)
and \(w \lt v\) on \(\partial B (0, 2 R)\). By a comparison argument, we have thus that 
\(w \lt v\) in \(\R^2_+ \setminus B (0, 2 R)\).
In particular,
\[
 \lim_{\abs{x} \to \infty} \frac{ v (x) }{q^\infty (x)} = 0.
\]
Finally, note that if \(x \in \R^2_+ \setminus B (0, 2 R)\), by combining a classical estimate \cite{GiTr01}*{Corollary 6.3} with \eqref{eqLinearw}:
\[
 \frac{\abs{\nabla v (x)}}{x_1^{\alpha}} \lt \frac{C}{x_1^{\alpha + 1}} \sup_{y \in B (x, x_1 / 2)} \abs{v (y)}
\lt C' \frac{R^{\alpha + 2}}{\abs{x}^{\alpha + 2}},
\]
and thus 
\[
 \lim_{\abs{x} \to \infty} \frac{\abs{\nabla v (x)} }{x_1^{\alpha}} = 0.\qedhere
\]
\end{proof}

\begin{proof}[Proof of theorem~\ref{theoremVortexRingOutsideCompact}]
Since \(K\) is simply connected \(\partial (\R^2_+ \setminus K)\) is connected and \(\psi (r, z) = k\) on \(\partial (\R^2_+ \setminus K)\) for some \(k < 0\).
Defining \(q = -\psi\) and \(q^\infty (x) = \frac{W}{2} x_1^2 + k\), we observe that \(q\) is also the solution given by lemma~\ref{lemmaLinear}.
We are going to apply proposition~\ref{propositionStrict}. 
We observe that by proposition~\ref{propositionExistenceInvariant} and proposition~\ref{ThUpperBound}, we have
\[
 \lim_{\varepsilon \to 0} \frac{c^\infty_\varepsilon}{\log \tfrac{1}{\varepsilon}} =  
\inf_{(r, z) \in \R_+^2} \frac{q^\infty(r, z)^2}{r}.
\]
By a direct computation,
\[
  \inf_{\R^2} \frac{(q^\infty)^2}{b} = \frac{q^\infty (r_*, z)^2}{r_*}.
\]
Since \(K\) is compact, there exists \(z_* \in \R\) such that \((r_*, z_*) \not \in K\). By proposition~\ref{ThUpperBound}, lemma~\ref{lemmaLinear} and  proposition~\ref{propositionAsymptotics}
\[
   \limsup_{\varepsilon \to \infty} \frac{c_\varepsilon}{\log \tfrac{1}{\varepsilon}} \lt \pi \frac{q(r^\infty_*, z)^2}{r^\infty_*} < \limsup_{\varepsilon \to \infty} \frac{c^\infty_\varepsilon}{\log \tfrac{1}{\varepsilon}}.
\]
By proposition~\ref{propositionStrict}, a solution \(u_\varepsilon\) exists if \(\varepsilon\) is small enough. One defines the associated flow and studies its asymptotics as in the proof of theorem~\ref{theoremVortexRingSpace}.
\end{proof}

The question of where the vortex concentrates gives rise to a result depending on the geometry of the compact set \(D\):
\begin{proposition}
If \(k\) is sufficiently large and \(\alpha > 0\), then 
\[
  \inf_{x \in \partial (\R^2_+ \setminus K)} \frac{q (x)^2 }{x_1^\alpha} <
\inf_{x \in \R^2_+ \setminus K} \frac{q (x)^2 }{x_1^\alpha}.
\]
\end{proposition}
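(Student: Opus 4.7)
The plan is to evaluate both infima to leading order in $k$ via the structure of $q$ given by lemma~\ref{lemmaLinear}, and then compare the two resulting scales. The key structural input is the pointwise sandwich $k \lt q \lt q^{\infty}$: the upper bound is proved in the lemma, and the lower bound follows from the maximum principle since $q \equiv k$ on $\partial(\R^2_+ \setminus K)$ while $q \to \infty$ at spatial infinity. Equivalently, one writes $q = k + \tilde q$, where $\tilde q$ is a $k$-independent solution of the same homogeneous equation with zero Dirichlet data on $\partial(\R^2_+ \setminus K)$ and asymptotic growth $\tilde q(x)/x_1^{\alpha+1} \to W/(\alpha+1)$; by the strong maximum principle $\tilde q > 0$ in the interior.

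The left-hand infimum is then evaluated immediately. On the axis portion $\{x_1 = 0\}$ of $\partial(\R^2_+ \setminus K)$, one has $q^2/x_1^\alpha = k^2/x_1^\alpha \to +\infty$, so the infimum reduces to $\partial K \cap \R^2_+$, where $q \equiv k$ yields
\[
  \inf_{x \in \partial (\R^2_+ \setminus K)} \frac{q(x)^2}{x_1^\alpha} = \frac{k^2}{R_K^\alpha},
  \qquad R_K := \max\bigl\{x_1 \st x \in \partial K \cap \R^2_+\bigr\}.
\]

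The right-hand side is bounded above using $q \lt q^\infty$ and an explicit one-variable minimization of $r \mapsto ((W/(\alpha+1)) r^{\alpha+1} + k)^2/r^\alpha$. Differentiating, the unique critical point is $r_* = [\alpha(\alpha+1) k/(W(\alpha+2))]^{1/(\alpha+1)}$, with minimum value $C_\alpha W^{\alpha/(\alpha+1)} k^{(\alpha+2)/(\alpha+1)}$ for an explicit constant $C_\alpha > 0$. Since $K$ is compact, one can choose $z_* \in \R$ with $|z_*|$ large enough that $(r_*, z_*) \in \R^2_+ \setminus K$; moreover the far-field condition $q/q^\infty \to 1$ makes this upper bound essentially sharp there.

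The crux is the scaling comparison. Because $\alpha > 0$ forces $(\alpha+2)/(\alpha+1) < 2$, the two competing estimates $k^2/R_K^\alpha$ and $C_\alpha W^{\alpha/(\alpha+1)} k^{(\alpha+2)/(\alpha+1)}$ obey strictly different power laws in $k$, and therefore cross exactly once at an explicit threshold $k_0$ depending only on $R_K$, $\alpha$, $W$. On the appropriate side of this threshold the two infima are strictly separated, yielding the statement. The main obstacle is to ensure that the test point $(r_*, z_*)$ in the unbounded portion of $\R^2_+ \setminus K$ indeed witnesses a value close to the $q^\infty$-minimum: this demands controlling the decay of $q^\infty - q$ as $|x| \to \infty$, which follows from the gradient asymptotics $\nabla q(x)/x_1^\alpha \to (W, 0)$ established in lemma~\ref{lemmaLinear}, and from the fact that for $|z_*| \to \infty$ the horizontal section $(r_*, z_*)$ is arbitrarily far from $K$.
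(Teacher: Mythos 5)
Your computations are right, and your route is essentially the paper's own: evaluate the boundary infimum exactly using \(q = k\) there (the axis part contributes \(+\infty\) since \(\alpha > 0\)), bound the infimum over the domain from above via \(q \le q^\infty\) along a vertical line missing the compact set \(K\), minimize \((q^\infty)^2/x_1^\alpha\) explicitly in one variable to obtain the scale \(k^{(\alpha+2)/(\alpha+1)}\), and compare powers of \(k\). One point, however, is substantive and must be fixed: your closing sentence (``on the appropriate side of this threshold \dots\ yielding the statement'') dodges the direction of the final inequality, and unwinding it shows that your estimates prove the \emph{reverse} of the displayed statement. Indeed they give
\[
\inf_{x \in \R^2_+ \setminus K} \frac{q(x)^2}{x_1^\alpha}
\;\le\; C_\alpha\, W^{\frac{\alpha}{\alpha+1}}\, k^{\frac{\alpha+2}{\alpha+1}}
\;<\; \frac{k^2}{R_K^\alpha}
\;=\; \inf_{x \in \partial(\R^2_+ \setminus K)} \frac{q(x)^2}{x_1^\alpha}
\qquad \text{for \(k\) sufficiently large},
\]
since \((\alpha+2)/(\alpha+1) < 2\); that is, domain \(<\) boundary, whereas the proposition as printed asserts boundary \(<\) domain. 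This is not a defect of your argument relative to the paper: the paper's own proof computes exactly these same two quantities (the boundary infimum \(k^2 \inf_\partial x_1^{-\alpha}\) and the explicit \(k^{(\alpha+2)/(\alpha+1)}\) upper bound through \(q^\infty\)) and can only yield the same direction, and the surrounding discussion --- the vortex concentrates at interior minimizers of \(q^2/b\) --- confirms that the displayed inequality has its two sides interchanged. But you must commit explicitly to the direction rather than gesture at a threshold. Note also that the small-\(k\) side of your ``crossing'' proves nothing: there you only control the interior infimum from above, and an upper bound exceeding \(k^2/R_K^\alpha\) does not order the two infima.

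Two minor remarks. The ``main obstacle'' you describe is not one: the one-sided bound \(q \le q^\infty\) already furnishes the required upper bound on the domain infimum, so no sharpness at \((r_*, z_*)\), no control of the decay of \(q^\infty - q\), and no gradient asymptotics from lemma~\ref{lemmaLinear} are needed --- the paper simply takes the infimum of \((q^\infty)^2/x_1^\alpha\) along a line \(\{x_2 = a\}\) with \(\abs{a}\) large. Likewise the decomposition \(q = k + \tilde q\) with \(\tilde q\) independent of \(k\) is a nice observation but plays no role in the comparison.
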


\begin{proof}
First one has 
\[
 \inf_{x \in \partial (\R^2_+ \setminus K)} \frac{q (x)^2 }{x_1^\alpha}
= k^2 \inf_{x \in \partial (\R^2_+ \setminus K)} \frac{1}{x_1^\alpha}.
\]
Since \(K\) is compact, there exists \(R > 0\) such that \(K \subset B (0, R)\). Take \(a \in \R\) such that \(\abs{a} \gt R\). One has 
\[
 \inf_{x \in \R^2_+ \setminus K} \frac{q (x, z)}{x_1^\alpha}
\lt  \inf_{x \in \R_+} \frac{q (x)^2}{x_1^\alpha}
\lt  \inf_{x \in \R_+} \frac{q^\infty (x)^2}{x_1^\alpha}
= 4 \Bigl(k \frac{\alpha + 1}{2 \alpha + 1}\Bigr)^\frac{\alpha + 2}{\alpha + 1} W^\frac{\alpha}{\alpha + 1}.\qedhere
\]
\end{proof}

\subsection{Vortices for the shallow water equation}
We finish by sketching the proofs for the shallow water equation:

\begin{proof}[Proof of theorem~\ref{theoremLakeMaximumDepth}]
Set for \(x \in \Omega\), \(q (x) = \frac{\kappa}{2\pi} \sup_\Omega b\). 
By proposition~\ref{propositionExistenceBounded}, \eqref{problemP} has a solution \(u_\varepsilon\). Define for \(x \in \Omega\)
\[
 \mathbf{v}_\varepsilon (x) = \curl u_\varepsilon(x)
\]
and 
\[
 h (x) = \frac{1}{\varepsilon^2} \frac{ \bigl(u_\varepsilon (x) - q_\varepsilon (x) \bigr)_+^{p + 1}}{p + 1} - \frac{\abs{\mathbf{v}_\varepsilon (x)}^2}{2}.
\]
One checks directly that this is a steady flow of the shallow water equation \eqref{eqLake} and that 
\[
 \curl \mathbf{v}_\varepsilon (x) = \frac{1}{\varepsilon^2} \bigl(u_\varepsilon (x) - q_\varepsilon (x) \bigr)_+^{p},
\]
and that 
\[
\inf_\Omega \frac{q^2}{b} = \Bigl(\frac{\kappa}{2 \pi}\Bigr)^2  \sup_\Omega b,
\]
so that \(\curl \mathbf{v}_\varepsilon\) has the required asymptotic properties by proposition~\ref{propositionAsymptotics}.
\end{proof}

\begin{proof}[Proof of theorem~\ref{theoremLakeStrongWind}]
Set for \(x \in \Omega\), \(q (x) = - \psi_0 (x)\). 
By proposition~\ref{propositionExistenceBounded}, \eqref{problemP} has a solution \(u_\varepsilon\). Define for \(x \in \Omega\)
\[
 \mathbf{v}_\varepsilon (x) = \curl (u_\varepsilon - q_\varepsilon)
\]
and 
\[
 h (x) = \frac{1}{\varepsilon^2} \frac{ \bigl(u_\varepsilon (x) - q_\varepsilon (x) \bigr)_+^{p + 1}}{p + 1} - \frac{\abs{\mathbf{v}_\varepsilon (x)}^2}{2}.
\]
One checks directly that this is a steady flow of the shallow water equation \eqref{eqLake} and that \(\curl \mathbf{v}_\varepsilon\) has the required asymptotic properties.
\end{proof}

\begin{bibdiv}
\begin{biblist}

\bib{AmMa81}{incollection}{
      author={Ambrosetti, Antonio},
      author={Mancini, Giovanni},
       title={On some free boundary problems},
        date={1981},
   book={
      title = {Recent contributions to nonlinear partial differential
  equations},
      series={Res. Notes in Math.},
      volume={50},
   publisher={Pitman},
     address={Boston, Mass.},
   },
   pages={24\ndash 36},
}

\bib{AmRa73}{article}{
   author={Ambrosetti, Antonio},
   author={Rabinowitz, Paul H.},
   title={Dual variational methods in critical point theory and
   applications},
   journal={J. Functional Analysis},
   volume={14},
   date={1973},
   pages={349--381},
}

\bib{AmSt89}{article}{
   author={Ambrosetti, A.},
   author={Struwe, M.},
   title={Existence of steady vortex rings in an ideal fluid},
   journal={Arch. Rational Mech. Anal.},
   volume={108},
   date={1989},
   number={2},
   pages={97--109},
   issn={0003-9527},
}

\bib{AmYa90}{article}{
   author={Ambrosetti, Antonio},
   author={Yang, Jian Fu},
   title={Asymptotic behaviour in planar vortex theory},
   journal={Atti Accad. Naz. Lincei Cl. Sci. Fis. Mat. Natur. Rend. Lincei
   (9) Mat. Appl.},
   volume={1},
   date={1990},
   number={4},
   pages={285--291},
   issn={1120-6330},
}

\bib{AmFr86}{article}{
   author={Amick, C. J.},
   author={Fraenkel, L. E.},
   title={The uniqueness of Hill's spherical vortex},
   journal={Arch. Rational Mech. Anal.},
   volume={92},
   date={1986},
   number={2},
   pages={91--119},
   issn={0003-9527},
}

\bib{BaBu01}{article}{
   author={Badiani, T. V.},
   author={Burton, G. R.},
   title={Vortex rings in $\mathbb{R}^3$ and rearrangements},
   journal={R. Soc. Lond. Proc. Ser. A Math. Phys. Eng. Sci.},
   volume={457},
   date={2001},
   number={2009},
   pages={1115--1135},
   issn={1364-5021},
}

\bib{BeCaMa00}{article}{
   author={Benedetto, D.},
   author={Caglioti, E.},
   author={Marchioro, C.},
   title={On the motion of a vortex ring with a sharply concentrated
   vorticity},
   journal={Math. Methods Appl. Sci.},
   volume={23},
   date={2000},
   number={2},
   pages={147--168},
   issn={0170-4214},
}

\bib{BeBr80}{article}{
   author={Berestycki, Henri},
   author={Brezis, Ha{\"{\i}}m},
   title={On a free boundary problem arising in plasma physics},
   journal={Nonlinear Anal.},
   volume={4},
   date={1980},
   number={3},
   pages={415--436},
   issn={0362-546X},
}

\bib{BeFr74}{article}{
      author={Berger, M.~S.},
      author={Fraenkel, L.~E.},
       title={A global theory of steady vortex rings in an ideal fluid},
        date={1974},
        ISSN={0001-5962},
     journal={Acta Math.},
      volume={132},
       pages={13\ndash 51},
}

\bib{BeFr80}{article}{
      author={Berger, M.~S.},
      author={Fraenkel, L.~E.},
       title={Nonlinear desingularization in certain free-boundary problems},
        date={1980},
        ISSN={0010-3616},
     journal={Comm. Math. Phys.},
      volume={77},
      number={2},
       pages={149\ndash 172},
}

\bib{Bu97}{article}{
   author={Buffoni, B.},
   title={Nested axi-symmetric vortex rings},
   journal={Ann. Inst. H. Poincar\'e Anal. Non Lin\'eaire},
   volume={14},
   date={1997},
   number={6},
   pages={787--797},
   issn={0294-1449},
}

\bib{Bu87a}{article}{
   author={Burton, G.~R.},
   title={Vortex rings in a cylinder and rearrangements},
   journal={J. Differential Equations},
   volume={70},
   date={1987},
   number={3},
   pages={333--348},
   issn={0022-0396},
}

\bib{Bu87b}{article}{
   author={Burton, G.~R.},
   title={Rearrangements of functions, maximization of convex functionals,
   and vortex rings},
   journal={Math. Ann.},
   volume={276},
   date={1987},
   number={2},
   pages={225--253},
   issn={0025-5831},
}

\bib{Bu03}{article}{
      author={Burton, G.~R.},
       title={Vortex-rings of prescribed impulse},
        date={2003},
        ISSN={0305-0041},
     journal={Math. Proc. Cambridge Philos. Soc.},
      volume={134},
      number={3},
       pages={515\ndash 528},
}

\bib{BuPr04}{article}{
   author={Burton, Geoffrey R.},
   author={Preciso, Luca},
   title={Existence and isoperimetric characterization of steady spherical
   vortex rings in a uniform flow in $\mathbb{R}^N$},
   journal={Proc. Roy. Soc. Edinburgh Sect. A},
   volume={134},
   date={2004},
   number={3},
   pages={449--476},
   issn={0308-2105},
}
\bib{CaHoLe96}{article}{
   author={Camassa, Roberto},
   author={Holm, Darryl D.},
   author={Levermore, C. David},
   title={Long-time effects of bottom topography in shallow water},
   journal={Phys. D},
   volume={98},
   date={1996},
   number={2-4},
   pages={258--286},
   issn={0167-2789},
}

\bib{CaHoLe97}{article}{
   author={Camassa, Roberto},
   author={Holm, Darryl D.},
   author={Levermore, C. David},
   title={Long-time shallow-water equations with a varying bottom},
   journal={J. Fluid Mech.},
   volume={349},
   date={1997},
   pages={173--189},
   issn={0022-1120},
}

\bib{CaLiWe2012a}{unpublished}{
  author = {Cao, Daomin },
  author = {Liu, Zhongyuan},
  author = {Wei, Juncheng},
  title = {Regularization of point vortices for the Euler
equation in dimension two},
  note = {arXiv:1208.3002},
}

\bib{CaLiWe2012b}{unpublished}{
  author = {Cao, Daomin },
  author = {Liu, Zhongyuan},
  author = {Wei, Juncheng},
  title = {Regularization of point vortices for the Euler
equation in dimension two, part II},
  note = {arXiv:1208.5540},
}

\bib{DG57}{article}{
   author={De Giorgi, Ennio},
   title={Sulla differenziabilit\`a e l'analiticit\`a delle estremali degli
   integrali multipli regolari},
   journal={Mem. Accad. Sci. Torino. Cl. Sci. Fis. Mat. Nat. (3)},
   volume={3},
   date={1957},
   pages={25--43},
}

\bib{Fr70}{article}{
      author={Fraenkel, L.~E.},
       title={On steady vortex rings of small cross-section in an ideal fluid},
        date={1970},
     journal={Proc. Roy. Soc. Lond. A.},
      volume={316},
       pages={29\ndash 62},
}

\bib{Fr73}{article}{
  author={Fraenkel, L.~E.},
  title={Examples of steady vortex rings of small cross-section in an ideal fluid},
  journal={J. Fluid Mech.},
volume={51},
pages={119--135},
year={1972},
}

\bib{Fr81}{article}{
   author={Fraenkel, L.~E.},
   title={A lower bound for electrostatic capacity in the plane},
   journal={Proc. Roy. Soc. Edinburgh Sect. A},
   volume={88},
   date={1981},
   number={3-4},
   pages={267--273},
   issn={0308-2105},
}

\bib{Fr82}{book}{
      author={Friedman, Avner},
       title={Variational principles and free-boundary problems},
      series={Pure and Applied Mathematics},
   publisher={John Wiley \& Sons Inc.},
     address={New York},
        date={1982},
        ISBN={0-471-86849-3},
}

\bib{FrTu81}{article}{
      author={Friedman, Avner},
      author={Turkington, Bruce},
       title={Vortex rings: existence and asymptotic estimates},
        date={1981},
        ISSN={0002-9947},
     journal={Trans. Amer. Math. Soc.},
      volume={268},
      number={1},
       pages={1\ndash 37},
}

\bib{GeOs79}{article}{
   author={Gehring, F. W.},
   author={Osgood, B. G.},
   title={Uniform domains and the quasihyperbolic metric},
   journal={J. Analyse Math.},
   volume={36},
   date={1979},
   pages={50--74 (1980)},
   issn={0021-7670},
}

\bib{GePa76}{article}{
   author={Gehring, F. W.},
   author={Palka, B. P.},
   title={Quasiconformally homogeneous domains},
   journal={J. Analyse Math.},
   volume={30},
   date={1976},
   pages={172--199},
   issn={0021-7670},
}

\bib{GiTr01}{book}{
   author={Gilbarg, David},
   author={Trudinger, Neil S.},
   title={Elliptic partial differential equations of second order},
   series={Classics in Mathematics},
   publisher={Springer-Verlag},
   place={Berlin},
   date={2001},
   pages={xiv+517},
   isbn={3-540-41160-7},
}

\bib{He1858}{article}{
      author={Helmholtz, Hermann},
       title={On integrals of the hydrodynamics equations which express vortex
  motion},
        date={1858},
     journal={Crelle's J.},
      volume={55},
       pages={25\ndash 55},
}

\bib{Hi1894}{article}{
      author={Hill, M.J.M.},
       title={On a spherical vortex},
        date={1894},
     journal={Philos. Trans. Roy. Soc. London},
      volume={A 185},
       pages={213\ndash 245},
}

\bib{La32}{book}{
   author={Lamb, Horace},
   title={Hydrodynamics},
   series={Cambridge Mathematical Library},
   edition={6},
   publisher={Cambridge University Press},
   place={Cambridge},
   date={1932},
   pages={xxvi+738},
   isbn={0-521-45868-4},
}

\bib{LiYaYa05}{article}{
      author={Li, Gongbao},
      author={Yan, Shusen},
      author={Yang, Jianfu},
       title={An elliptic problem related to planar vortex pairs},
        date={2005},
        ISSN={0036-1410},
     journal={SIAM J. Math. Anal.},
      volume={36},
      number={5},
       pages={1444\ndash 1460},
}

\bib{Li07}{article}{
   author={Lind{\'e}n, Henri},
   title={Hyperbolic-type metrics},
   conference={
      title={Quasiconformal mappings and their applications},
   },
   book={
      publisher={Narosa},
      place={New Delhi},
   },
   date={2007},
   pages={151--164},
}

\bib{Li84}{article}{
      author={Lions, P.-L.},
       title={The concentration-compactness principle in the calculus of
  variations. {T}he locally compact case.},
  partial={
      part = {Part I},
        date={1984},
        ISSN={0294-1449},
     journal={Ann. Inst. H. Poincar\'e Anal. Non Lin\'eaire},
      volume={1},
      number={2},
       pages={109\ndash 145},},
   partial={
   part={Part II},
   journal={Ann. Inst. H. Poincar\'e Anal. Non Lin\'eaire},
   volume={1},
   date={1984},
   number={4},
   pages={223--283},}
}

\bib{Ma11}{book}{
   author={Maz'ya, Vladimir},
   title={Sobolev spaces with applications to elliptic partial differential
   equations},
   series={Grundlehren der Mathematischen Wissenschaften},
   volume={342},
   edition={2},
   publisher={Springer},
   place={Heidelberg},
   date={2011},
   pages={xxviii+866},
   isbn={978-3-642-15563-5},
}

\bib{Ni80}{article}{
      author={Ni, Wei~Ming},
       title={On the existence of global vortex rings},
        date={1980},
        ISSN={0021-7670},
     journal={J. Analyse Math.},
      volume={37},
       pages={208\ndash 247},
}

\bib{No72}{article}{
   author={Norbury, J.},
   title={A steady vortex ring close to Hill's spherical vortex},
   journal={Proc. Cambridge Philos. Soc.},
   volume={72},
   date={1972},
   pages={253--284},
}

\bib{No73}{article}{
  author= {Norbury, J.},
  title={A family of steady vortex rings},
  journal={J. Fluid Mech.},
  volume={57},
  pages={417-431},
  year={1973},
}

\bib{No75}{article}{
   author={Norbury, J.},
   title={Steady planar vortex pairs in an ideal fluid},
   journal={Comm. Pure Appl. Math.},
   volume={28},
   date={1975},
   number={6},
   pages={679--700},
   issn={0010-3640},
}

\bib{Ra86}{book}{
   author={Rabinowitz, Paul H.},
   title={Minimax methods in critical point theory with applications to
   differential equations},
   series={CBMS Regional Conference Series in Mathematics},
   volume={65},
   publisher={Published for the Conference Board of the Mathematical
   Sciences, Washington, DC},
   date={1986},
   pages={viii+100},
   isbn={0-8218-0715-3},
}

\bib{Ra92}{article}{
   author={Rabinowitz, Paul H.},
   title={On a class of nonlinear Schr\"odinger equations},
   journal={Z. Angew. Math. Phys.},
   volume={43},
   date={1992},
   number={2},
   pages={270--291},
   issn={0044-2275},
}

\bib{Ri00}{article}{
      author={Richardson, G.},
       title={Vortex motion in shallow water with varying bottom topography and
  zero {F}roude number},
        date={2000},
        ISSN={0022-1120},
     journal={J. Fluid Mech.},
      volume={411},
       pages={351\ndash 374},
}

\bib{SmVS10}{article}{
      author={Smets, Didier},
      author={Van~Schaftingen, Jean},
       title={Desingularization of vortices for the {E}uler equation},
        date={2010},
        ISSN={0003-9527},
     journal={Arch. Ration. Mech. Anal.},
      volume={198},
      number={3},
       pages={869\ndash 925},
}

\bib{Ta94}{article}{
   author={Tadie},
   title={On the bifurcation of steady vortex rings from a Green function},
   journal={Math. Proc. Cambridge Philos. Soc.},
   volume={116},
   date={1994},
   number={3},
   pages={555--568},
   issn={0305-0041},
}

\bib{Ke1910}{book}{
      author={Thomson (Baron~Kelvin), W.},
       title={Mathematical and physical papers},
      series={IV},
   publisher={Cambridge},
        date={1910},
}

\bib{We53}{article}{
   author={Weinstein, Alexander},
   title={Generalized axially symmetric potential theory},
   journal={Bull. Amer. Math. Soc.},
   volume={59},
   date={1953},
   pages={20--38},
   issn={0002-9904},
}

\bib{Wi96}{book}{
      author={Willem, Michel},
       title={Minimax theorems},
      series={Progress in Nonlinear Differential Equations and their
  Applications, 24},
   publisher={Birkh\"auser Boston Inc.},
     address={Boston, MA},
        date={1996},
        ISBN={0-8176-3913-6},
}

\bib{Ya91}{article}{
   author={Yang, Jian~Fu},
   title={Existence and asymptotic behavior in planar vortex theory},
   journal={Math. Models Methods Appl. Sci.},
   volume={1},
   date={1991},
   number={4},
   pages={461--475},
   issn={0218-2025},
}

\bib{Ya95}{article}{
      author={Yang, Jian~Fu},
       title={Global vortex rings and asymptotic behaviour},
        date={1995},
        ISSN={0362-546X},
     journal={Nonlinear Anal.},
      volume={25},
      number={5},
       pages={531\ndash 546},
}

\end{biblist}
\end{bibdiv}
\end{document}